\newtheorem{thm}{Theorem}[section]
\newtheorem{lemma}[thm]{Lemma}
\newtheorem{cor}[thm]{Corollary}
\newtheorem{prop}[thm]{Proposition}
\newtheorem{defn}[thm]{Definition}
\newtheorem{remark}[thm]{Remark}
\numberwithin{equation}{section}
\newcommand{\id}{{\rm id}}
\newcommand{\bL}{\mathbb{L}}
\newcommand{\Z}{\mathbb{Z}}
\newcommand{\C}{\mathbb{C}}
\newcommand{\AI}{A_\infty}
\newcommand{\WT}[1]{\widetilde{#1}}
\newcommand{\Hom}{\mathrm{Hom}}
\newcommand{\Jac}{\mathrm{Jac}}
\newcommand{\HG}{{\widehat{G}}}
\subjclass[2000]{53D37, 53D40, 55N32}
\begin{document}

\author[Hansol Hong]{Hansol Hong}
\address{Department of Mathematics \\ Yonsei University \\ 50 Yonsei-Ro \\ Seodaemun-Gu \\ Seoul 03722 \\ Korea} 
\email{hansolhong@yonsei.ac.kr}

\author[Hyeongjun Jin]{Hyeongjun Jin}
\address{Department of Mathematics \\ Yonsei University \\ 50 Yonsei-Ro \\ Seodaemun-Gu \\ Seoul 03722 \\ Korea} 
\email{jhj941211@gmail.com, hj\_jin@yonsei.ac.kr}

\author[Sangwook Lee]{Sangwook Lee}
\address{Department of Mathematics and Integrative Institute for Basic Science \\ Soongsil University \\ 369 Sangdo-ro \\ Dongjak-Gu \\ Seoul 06978 \\ Korea} 
\email{sangwook@ssu.ac.kr}

\title[Orbifold Kodaira-Spencer maps and mirror symmetry for punctured surfaces]{Orbifold Kodaira-Spencer maps and closed-string mirror symmetry for punctured Riemann surfaces}
\begin{abstract}
When a Weinstein manifold admits an action of a finite abelian group, we propose its mirror construction following the equivariant TQFT-type construction, and obtain as a mirror the orbifolding of the mirror of the quotient with respect to the induced dual group action. As an application, we construct an orbifold Landau-Ginzburg mirror of a punctured Riemann surface given as an abelian cover of the pair-of-pants, and prove its closed-string mirror symmetry using the closed-open map twisted by the dual group action.
 \end{abstract}
%\subjclass{...}
%\keywords{.......}
\maketitle
\tableofcontents

%%%%%%%%%%%%%%%%%%%%%%%%%%%%%%%%%%%%%%%%%%%%%%%%%%%%%%%%%%%%%%%%%%

\section{Introduction}
Originating from predictions in string theory, mirror symmetry is a collection of  phenomena which involve various kinds of dualities between symplectic and complex geometries. Since the discovery of striking coincidence between the counts of rational curves in a projective quintic 3-fold and the period integrals in the mirror quintic \cite{Candelas}, the study of mirror symmetry has expanded significantly to  include mirror pairs of more general shapes. We are particularly interested in cases where the (complex) mirror is given as a complex manifold equipped with a holomorphic function known as a potential whose singularities encode complex geometric information. Such a mirror is referred to as a Landau-Ginzburg model, and our focus is its orbifold generalization in the presence of finite group symmetry.%In this paper we will focus on non-CY manifolds which are mostly noncompact.
% Among many versions of mirror symmetry, in this paper we investigate the relation between rings: one is the {\em symplectic cohomology} of a symplectic manifold and the other is the {\em Koszul cohomology} of mirror singularity. 

It has been one of the long-standing questions in mirror symmetry to find an effective equivariant framework when the symplectic manifold $X$ admits a symmetry group $G$. If $G$ is finite, the situation is relatively simple but still interesting, and there are several known approaches that relate mirror symmetry of $X$ and that of its quotient. At least when $G$ is abelian, it seems natural to consider the action of the dual group $\hat{G}$ on the other side of the mirror, and more interestingly, it is observed in broad generality that the mirror of $X$ should be obtained as the $\hat{G}$-quotient of the mirror of the quotient $X/G$ (whenever it defines a sensible geometric space). See for e.g., \cite{BH}.

The construction of a local mirror given in \cite{CHL2} is one instance that manifests the role of the dual group action on the mirror side. When a symplectic manifold $X$ admits an action of a  finite group $G$, we can study a local mirror of $X$ arising from an immersed Lagrangian $\bL$ in $X/G$. The local mirror $Y$ (with a potential $W:Y \to\Bbbk$)  for $X/G$ is given as the (weak) Maurer-Cartan deformation space of $\bL$, where the meaning of `deformation' will be explained shortly. %consisting of all (weak) bounding cochains in the Lagrangian Floer complex $CF(\bL,\bL)$. 

Recall that the Lagrangian Floer theory studies the intersection problem between Lagrangians quantum-corrected by the counts of pseudo-holomorphic disks with  boundaries on the Lagrangians. In particular, one can consider the intersection problem concerning a single Lagrangian $\bL$ with itself, and such counts  produce an $A_\infty$-algebra $CF(\bL,\bL)$ in this case. The (weak) Maurer-Cartan deformation space encodes the algebraic deformation of $CF(\bL,\bL)$ as an $A_\infty$-algebra. More specifically, 
%can be regarded as a way of studying Lagrangian intersection problem by `categorification'. Roughly speaking, if we consider Lagrangian submanifolds as objects and intersection points between them as morphisms, then the counting of pseudo-holomorphic polygons with Lagrangian boundary conditions gives rise to an $\AI$-category. Let $CF(L,L')$ be a morphism space of the category.
%For a single Lagrangian $\bL$, 
it consists of (weak) bounding cochains that are elements $b$ of $CF^1 (\bL,\bL)$ satisfying a certain equation called Maurer-Cartan equation up to a curvature $W(b)$ which will serve as a potential. (See \ref{subsec:BmLF} for more details.)
%
%Weak bounding cochains encode the information about algxebraic deformation of $CF(\bL,\bL)$ as an $A_\infty$-algebra. 

Now, the upstair mirror symmetry for $X$ can be recovered by orbifolding the mirror of $X/G$ with respect to a natural action of $\hat{G} = \hom (G,U(1))$ on $Y$.
% roughly defined as follows.
%We let $\chi \in \hat{G}$ act on a bounding cochain $b$ by $\chi \cdot b = \chi(g) b$ when $b$ is an image of a cycle in the intersection $L \cap g\cdot L$ in $X$ for $L$ projecting to $\bL$ under the quotient map.
%If $\check{X}$ is a (full) mirror of $X/G$, the mirror of $X$ should be given as $\check{X} / \hat{G}$
This point of view is often useful, 
%in studying mirror symmetry of $X$ itself, 
especially in the case it is easier to organize geometric information of the quotient $X/G$ for constructing its mirror space. In summary, one has

\begin{equation*}
\xymatrix{ X \ar@{<.>}[rr]^-{\textnormal{mirror}}  \ar[d]_{G\textnormal{-quotient}}&& Y/\hat{G}(\stackrel{\bar{W}}{\to} \mathbb{C}) \\  X/G \ar@{<.>}[rr]_{\textnormal{mirror}}  && Y(\stackrel{W}{\to} \mathbb{C})  \ar[u]_{\hat{G}\textnormal{-quotient}} }.
\end{equation*}

In this paper, we consider one of the simplest situations of the above kind, a Weinstein manifold $X$ with a free action of a finite abelian group $G$ on it, and explore the analogous questions about closed-string mirror symmetry. Although requiring freeness of the action seems a little restrictive, our discussion still includes reasonable amount of examples in dimension 2 which is our main focus, as open surfaces usually have bigger fundamental groups compared with compact ones. Any finite index subgroup of $\pi_1$ in this case is associated with a covering space $X$ which itself is a Weinstein manifold (by pulling back the structure from downstairs) equipped with the action of the finite deck transformation group. 

The version of mirror symmetry which we want to focus on is {\em closed-string mirror symmetry} which relates the symplectic cohomology of $X$ and the Koszul cohomology of $W$. The symplectic cohomology $SC^*(X)$ is the noncompact analogue of the Hamiltonian Floer cohomology. On chain-level, it is generated by Hamiltonian orbits where one uses a family of Hamiltonians which show a controlled behavior at each cylindrical end of $X$. It has a differential that counts pseudo-holomorphic cylinders between orbits, and is also equipped with the pair-of-pants product.

The main question is to find an appropriate framework to relate Floer theory of $X$ and that of $X/G$. Here is our key idea: we choose paths coherently from inputs to an output on each contributing pseudo-holomorphic map, similarly to a general formulation of $G$-equivariant TQFT appearing in \cite[2.6]{DBMS}. 
These paths efficiently encode lifting information of holomorphic curves in $X/G$ to $X$. One can recover Floer theory of $X$ by assigning a suitable weight to the curve counting downstairs, where the weight can be simply read off from some combinatorial data of these paths. More precisely, the weight is determined by the intersection numbers between the paths and the codimension 1 loci $\Theta$ in $X/G$ away from which the $G$-principal bundle $X \to X/G$ trivializes. We remark that the choice of $\Theta$ is not unique.

\vspace{0.3cm}

Having this as our geometric setup, we aim to understand the closed-string mirror symmetry of $X$ by passing to that of $X/G$ throughout the following steps.

\begin{enumerate}
\item[\bf (a)] {\bf $SH^*(X)$ via $SH^*(X/G)$ with $\hat{G}$-action.}
We first describe $SH^*(X)$ in terms of an enlargement of $SH^*(X/G)$ by the $\HG$-action. More precisely, we consider a `$\HG$-twisted' complex \[SC^*_{\hat{G}} (X/G):=SC^*(X/G) \otimes \Bbbk[\hat{G}]\] 
on which we keep track of lifting information about periodic orbits and pseudo-holomorphic curves in $X/G$.  Our argument mainly relies on basic covering theory here. However, the correspondence between $SC^*(X)$ and $SC_\HG^*(X/G)$ is not as simple as one might expect, since a single  liftable orbit in $X/G$ corresponds to a nontrivial linear combination of orbits in $SC^*(X)$ and vice versa. In addition, the weighted count mentioned above defines a differential on $SC^*_{\hat{G}} (X/G)$ and a product on its cohomology, but they are \emph{different} from algebraically induced ones just by taking semi-direct product. We emphasize that we always consider operations given by $\Theta$-weighted counts of pseudo-holomorphic curves. Now we have the following:

\begin{prop}[Proposition \ref{prop:isomupdownsc}]\label{prop:scupanddown} There exists an isomorphism of chain complexes \[SC^*(X) \to \left(SC^*_{\hat{G}} (X/G)\right)^{\hat{G}}\] such that
\begin{enumerate}
    \item it only depends on the choice of base points for  the covering $(X,x_0) \to (X/G , \bar{x}_0)$,
    \item it intertwines products on cohomology.
\end{enumerate}

%{\color{red} \begin{enumerate}
%\item[(i)] After fixing base points for the covering $(X,x_0) \to (X/G , \bar{x}_0)$, one has a natural $\hat{G}$-action on $SC^\ast (X)$ commuting with the obvious geometric $G$-action on it. 
%\item[(ii)] Taking the  average with respect to $G$ action (further twisted by the $\hat{G}$-action) gives rise to a natural isomorphism $SC^\ast(X) \to \left(SC^\ast_{\hat{G}} (X/G)\right)^{\hat{G}}$.
%%There exists a natural isomorphism $SC^\ast(X) \to \left(SC^\ast_{\hat{G}} (X/G)\right)^{\hat{G}}$ which takes the (twisted) average with respect to some natural $\hat{G}$-action on $SC^\ast(X)$. 
%In particular, the $\chi$-eigenspace is mapped into $SC^\ast (X/G) \otimes \chi$ for each $\chi \in \hat{G}$.
%\end{enumerate}}
\end{prop}

%This allows us to interprete the mirror symmetry for $X$ with that of $X/G$. 

\item[\bf (b)] {\bf  Construction of the mirror of $X/G$.}
We use the construction of \cite{CHL} to find a local mirror of $X/G$. This involves a choice of `good enough' Lagrangian $\bL$, which is unfortunately not canonically given, but rather experimental.\footnote{Heuristically, one may argue using Koszul duality \cite{Hongmc} that the Maurer-Cartan deformation of a Lagrangian skeleton encodes the mirror geometry, but it is generally a nonsense for too singular skeletons.}
Nevertheless, once cleverly chosen, $\bL$ provides not only a Landau-Ginzburg (LG for short) mirror but also a closed-string $B$-model invariant canonically associated to this LG model. Let us denote by $W:Y  \to \Bbbk$ the Lagrangian Floer potential of $\bL$.  

We will work with the closed-string $B$-invariant $CF((\bL,b),(\bL,b))$ with $b$ viewed as a variable varying over the weak Maurer-Cartan space $Y:=MC_{weak} (\bL)$. (See \ref{subsec:BmLF} for its precise definition.) It is essentially a part of the Hochschild cohomology of the $A_\infty$-algebra $CF(\bL,\bL)$ up to some modifications, but is computationally much simpler than the Hochschild cohomology itself, as it can be formulated nearly in terms of operations on the original Lagrangian Floer complex (in a similar spirit to \cite{Smith:2023aa}). 
%We denote by $W:Y  \to \Bbbk$ the Floer potential of $\bL$. The Landau-Ginzburg model $(Y,W)$ will serve as a mirror of $X/G$.
When $X/G$ is a pair-of-pants and $\bL$ a suitably chosen immersed circle (appearing in Figure \ref{fig:seilag}),  the cohomology $HF((\bL,b),(\bL,b))$ will be shown to be isomorphic as a ring to a certain singularity invariant of the potential $W=xyz$, called the \emph{Koszul cohomology} $Kos(W)$ (see \ref{subsec:orbLGB}).

\item[\bf (c)] {\bf Closed-string mirror symmetry for $X/G$.}
The downstair mirror symmetry for $X/G$ will follow from the natural ring homomorphism
\begin{equation}\label{eqn:KSdown}
KS: SH^*(X/G) \to HF((\bL,b),(\bL,b)),
\end{equation}
whose underlying chain-level map $\mathfrak{ks}:SC^*(X/G) \to CF((\bL,b),(\bL,b))$ is nothing but a closed-open map, counting punctured pseudo-holomorphic disks. 
We refer to this map as the Kodaira-Spencer map, following the terminology of  \cite{FOOO10b}. 
%which is nothing but a component of the closed-open map
%$$\mathfrak{ks}: SC^\ast(X/G) \to CF((\bL,b),(\bL,b))$$
%on the chain-level, 

In fact, the map $KS$ is a noncompact analogue of the ring homomorphism from the quantum cohomology of a toric manifold to the Jacobian ring $\Jac(W)$ of its LG mirror $W$ appearing in \cite{FOOO10b}. Our situation is significantly different from \cite{FOOO10b} in two aspects due to the fact that $W$ has nonisolated singularities in general: (i) our closed-string $B$-invariant $Kos(W)$ is infinite dimensional over $\Bbbk$ (as is $SH^*(X/G)$), and (ii) it has higher degree components while $\Jac(W)$ sits as the degree-$0$ component of $Kos(W)$.
%For the pair-of pants $X/G$, 
\begin{prop}[Corollary \ref{cor:basecase}]\label{prop:popisomintro}
If $X/G$ is a pair-of-pants, the map \[KS :SH^*(X/G) \to HF((\bL,b),(\bL,b))\]
%(as well as its underlying chain-level map) 
is a $\HG$-equivariant ring isomomorphism.
%
%{\color{red} \vspace{0.3cm}
%NOTATIONS	:     $\qquad \mathfrak{ks} : SC \to CF, \qquad  KS:SH \to HF$}
\end{prop}
In this case, by composing  $HF((\bL,b),(\bL,b)) \cong Kos(W)$ in {\bf (b)}, we have
\begin{equation*}
 SH^*(X/G) \stackrel{\cong}{\to} Kos(W)
\end{equation*}
which will nicely match closed-string $A$-model invariant of $X/G$ and $B$-model invariant of the mirror potential $W=xyz$. 
%We note that this theorem is the most important technical foundation of all subsequent results in this paper concerning punctured Riemann surfaces, and that this kind of Kodaira-Spencer type maps from symplectic cohomology has not been computed rigorously in any literature as far as authors know.

\item[\bf (d)] {\bf Closed-string mirror symmetry for $X$.}
We finally lift the argument to the equivariant setting (with help of $\hat{G}$-equivariance of $KS$ \eqref{eqn:KSdown}) to study the mirror symmetry between $X$ and the orbifold LG model $\bar{W}: Y/\hat{G} \to \Bbbk$. It will be obtained as the $\hat{G}$-invariant part of the map between 
\begin{equation}\label{eqn:equivcpx}
\mathfrak{ks}_{\hat{G}}: SC^*_{\hat{G}} (X/G ) \to  CF_{\hat{G}} ((\bL,b),(\bL,b))
\end{equation}
which turns out to be $\hat{G}$-equivariant as well.
The $A_\infty$-operations on \[CF_{\hat{G}} ((\bL,b),(\bL,b)):=CF ((\bL,b),(\bL,b)) \otimes \Bbbk[\hat{G}]\] count holomorphic disks on $\bL$ as usual, but the count is additionally twisted by weights coming from the intersection between $\Theta$ and the chosen paths from inputs to the output. Again, we note that the $\AI$-operations are not the algebraically-modified ones for semi-direct product. An analogous twist should also apply to the count of punctured disks for $\mathfrak{ks}_{\hat{G}}$, and this is crucial for it to become a ring homomorphism. Note that $SC^*_{\hat{G}} (X/G)$ and $CF_{\hat{G}} ((\bL,b),(\bL,b))$ contains $SC^*(X/G)$ and $CF((\bL,b),(\bL,b))$ respectively, corresponding to the identity element of $\hat{G}$. Extra factors they have can be viewed as the analogue of twisted sectors in the orbifold cohomology $H_{orb}^{\ast}$, and we may still call them twisted sectors for this reason.
We shall see that the $\HG$-invariant subalgebra $HF_{\hat{G}} ((\bL,b),(\bL,b))^\HG$ computes a certain singularity invariant of $\bar{W}$ (as known as the {\em orbifold Koszul algebra}), denoted by $Kos (W,\hat{G})$ (Definition \ref{def:orblgB}). 
%, when $X$ is an abelian cover of the pair-of-pants.
\footnote{For a special choice of $\Theta$, $CF_{\hat{G}} ((\bL,b),(\bL,b))$ can be made identical to the orbifold closed-string $B$-model appearing in \cite{CLe}, but is only quasi-isomorphic to that in general.}

\end{enumerate}

We want to give a more explicit account of the mirror symmetry for punctured Riemann surfaces $X$ given as an abelian cover of the pair-of-pants $P=X/G$ where $G$ is the deck transformation group.
Most of known results on its mirror symmetry, such as \cite{Lee} or \cite{CHL_gl}, use a pair-of-pants decomposition and local-to-global construction of the mirror space.  The resulting mirror can be described as a LG model on a toric Calabi-Yau, glued up from the local piece isomorphic to $W:\mathbb{C}^3 \to \mathbb{C}$ for $W (x,y,z) = xyz$. 
% The other direction of mirror symmetry was investigated in, for e.g., \cite{CLL}, \cite{Abouzaid:2021aa}, etc. 
Alternatively, we construct the mirror of $X$ by orbifolding that of $P$ via the $\hat{G}$-action, and use the corresponding variation of the Kodaira-Spencer map to prove the closed-string mirror symmetry for $X$ as summarized in \eqref{eqn:diaequivsh}.

\begin{equation}\label{eqn:diaequivsh}
\xymatrix{ SH^*(X) \ar[rr]^{KS_{\hat{G}} \hspace{1cm}}  && Kos( \bar{W}: \mathbb{C}^{3}/\hat{G} \to \mathbb{C} ) \\  SH^*(P) \ar@{.>}[u]^{\textnormal{Prop. \ref{prop:scupanddown}}} \ar[rr]_{KS \hspace{1cm}}  && Kos( W=xyz:\mathbb{C}^3 \to \mathbb{C} ) \ar@{.>}[u]_{\hat{G}\textnormal{-orbifolding}} }
\end{equation}

%The above construction provides an alternative approach for surfaces $X$ equipped with a finite abelian symmetry whose associated quotient is a pair-of-pants $P$. 
 We choose an immersed circle $\bL$ in $P$ with three transversal self-intersection points as depicted in Figure \ref{fig:seilag}, often referred to as the Seidel Lagrangian, whose (weak) Maurer-Cartan deformation successfully recovers the known mirror $W=xyz$ on $\mathbb{C}^3$.\footnote{The usage of $\mathbb{C}$-coefficients in the mirror construction will be justified in Lemma \ref{lem:seidelLagoverc}.} 
%We first show in \ref{subsec:popms} that the Kodaira-Spencer map becomes a ring isomorphism for $P$. 
The critical loci of $W$ is the union of three coordinate axes in $\mathbb{C}^3$, and the Kodaira-Spencer map in Proposition \ref{prop:popisomintro} identifies nonconstant orbits appearing on each cylindrical end of $P$ with algebraic de Rham forms on the corresponding coordinate axis in $crit(W)$.
With this understanding of $P$, the rest of question for its abelian covers is to inspect twisted sectors arising from $\hat{G}$-quotient. Namely, the orbifold LG mirror is the global quotient of 
$$W : \mathbb{C}^3 \to \mathbb{C} \quad (x,y,z)\mapsto xyz$$ 
by the $\hat{G}$-action preserving $W$. As mentioned, the closed-string $B$-model invariant $Kos (W,\hat{G})(\cong HF_{\hat{G}} ((\bL,b),(\bL,b)))$ of $W/\hat{G} : [\mathbb{C}^3/\hat{G}] \to \mathbb{C}$ has extra components associated with fixed loci of the action as well as the untwisted sector concerning $W$ on $\mathbb{C}^3$ itself. 
%(the latter can be also thought of as being fixed by $1$). 

\begin{thm}[Corollary \ref{cor:final}]\label{thm:mainksrs}
Let $G$ be a finite abelian group.
For a principal $G$-bundle $X \to P$ of the pair-of-pants $P$, we have an isomorphism
\begin{equation}\label{eqn:mainisom}
 SH^*(X) \stackrel{\cong}{\to} Kos(W,\hat{G}) 
\end{equation}
obtained by a sequence of quasi-isomorphisms 
$$ SC^*(X) \to SC^*_{\hat{G}} (P)^{\hat{G}} \to CF_{\hat{G}} ( (\bL,b),(\bL,b))^{\hat{G}} \to K^\ast (W,\hat{G})$$
where $ K^\ast (W,\hat{G})$ is a cochain complex underlying $Kos(W,\hat{G})$.  In particular, if we transfer the algebra structure from $CF_{\hat{G}} ( (\bL,b),(\bL,b))^{\hat{G}}$, then we get a new product structure on $Kos (W,\hat{G})$\footnote{This new product structure can also be intrinsically formulated using endomorphisms of the twisted diagonal matrix factorization. See \cite{Leee}.}  that makes \eqref{eqn:mainisom} into a ring isomorphism.
\end{thm}

It seems that the closed-string mirror symmetry on this geometric setup has not been studied much, although it is not very hard to explicitly compute and match both sides of \eqref{thm:mainksrs}. (For example, $SH^*(X)$ for surfaces $X$ can be easily calculated using the result of \cite{GP20}.)
To authors' knowledge, this is the first place where the closed-string mirror symmetry for noncompact symplectic manifolds is shown by this Kodaira-Spencer type approach, while we have a few preceding results in the compact case \cite{FOOO-T} and \cite{ACHL}. In fact, the latter deals with $\mathbb{P}^1_{a,b,c}$ that is an orbifold compactification of the pair-of-pants, and its quantum cohomology can be viewed as a deformation of $SH^*(P)$ by the \emph{Borman-Sheridan class} $\mathcal{BS}$ \cite{Ton}, \cite{BSV}. One natural question along this line is to see how the image of $\mathcal{BS}$ under $\mathfrak{ks}$ deforms the LG mirror of $P$.
Also, the recent work of \cite{Jeffs:2023aa} proves the closed-string mirror symmetry for nodal curves. It would be interesting to find an extension of our construction to part of these nodal cases.

\vspace{0.5cm}

We finally give an intuitive description of the geometry of our orbifold LG mirror in Theorem \ref{thm:mainksrs} in the following two illustrative examples. (See \ref{subsec:exexex} for rigorous treatments.) We shall see that the general shape of the mirror for abelian covers of $P$ is roughly a mixture of these two types.

\subsubsection*{\bf Sphere with four punctures}
Let us first take a four punctured sphere $X$ (Figure \ref{fig:4punc} (a)) whose quotient by the rotation action of $G=\mathbb{Z}/2$ gives the pair-of-pants $P$. The mirror orbifold LG model is given as the potential $W$ equipped with the action of $\hat{G}=\mathbb{Z}/2$ generated by $(x,y,z) \mapsto (-x,-y, z)$. In Figure \ref{fig:4punc} (b), we heuristically describe the twisted sectors of $[crit(W) / \hat{G}]$. Its untwisted sector corresponds to the original critical loci $crit(W)$, but $[crit(W) / \hat{G}]$ carries an additional component, a coordinate axis isomorphic to $\mathbb{C}$, since the $z$-axis is fixed by the nontrivial element in $\hat{G}$. This accounts for the fact that $X$ has one more cylindrical end compared to $P$. We emphasize however that the additional component in $[crit(W) / \hat{G}]$ does not precisely correspond a single cyclindrical end on $A$-side, but rather a certain eigenspace in $SC^*(X)$ with respect to the $G$-action.
%in accordance with Proposition \ref{prop:scupanddown}.

\begin{figure}[h]
\includegraphics[scale=0.5]{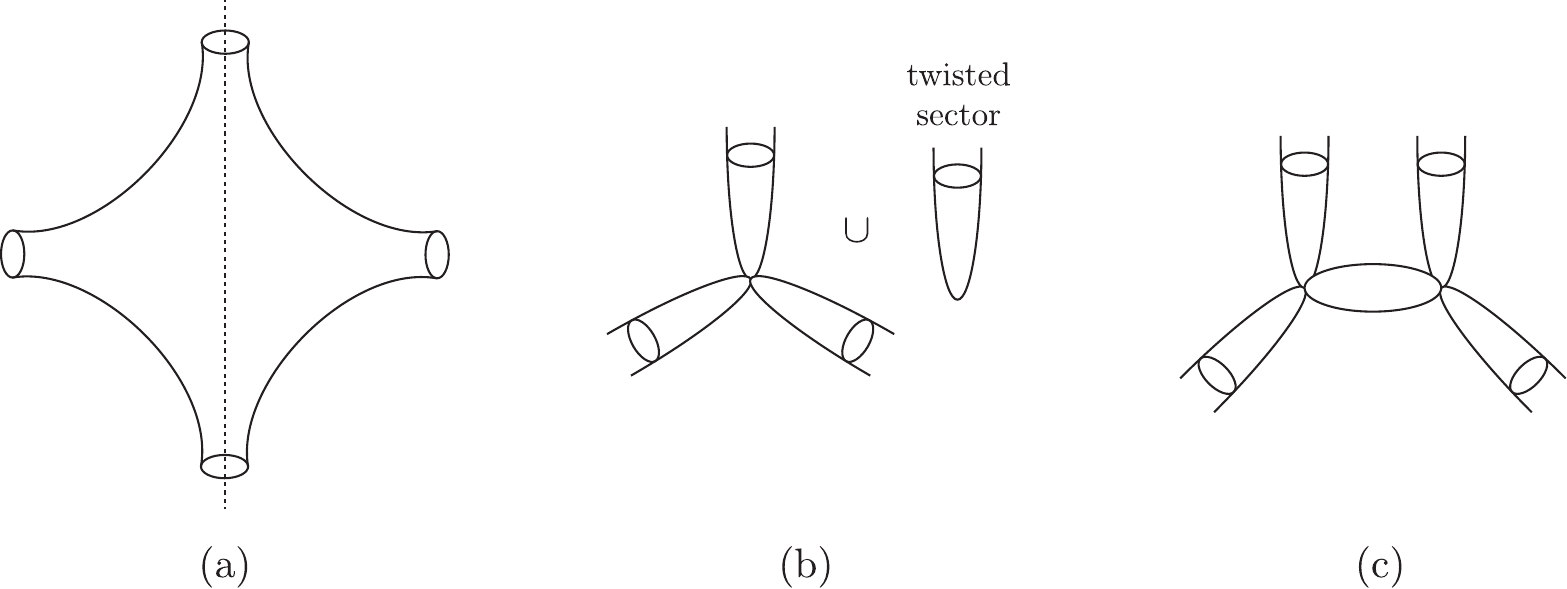}
\caption{Mirrors of the four-punctured sphere}
\label{fig:4punc}
\end{figure}

One can find a similar feature in its toric Calabi-Yau mirror. Through pair-of-pants decomposition, one obtains a mirror LG model $\tilde{W}$ on $O_{\mathbb{P}^1}  \oplus O_{\mathbb{P}^1} (-2)$ which is a resolution of the quotient $\mathbb{C}^3 / \hat{G}$. The critical loci of $W$ are its codimesion 2 toric strata as depicted in Figure \ref{fig:4punc} (c). The only difference from the previous picture of the orbifold mirror is the $\mathbb{P}^1$ component in $crit(\tilde{W})$, which does not actually affect as this $\mathbb{P}^1$ can only support constant (holomorphic) functions.
 
\subsubsection*{\bf Torus with three punctures}
We next look at the example of a tri-punctured elliptic curve $X$ (Figure \ref{fig:3puncell} (a)), which can be explicitly written as a hypersurface of $(\mathbb{C}^\ast)^2$ defined by $z_1 + z_2 + \frac{1}{z_1 z_2} =1$. It admits an action of $G= \mathbb{Z}/3$ by multiplying the third of unity diagonally to $z_1$ and $z_2$, and the quotient is again the pair-of-pants $P$. $X$ is also well-known as a generic fiber of the mirror $\mathbb{P}^2$, and all the three vanishing cycles project to the Seidel Lagrangian $\bL$ in $P$. (See \cite[Section 4]{AKO} or \cite[Section 6]{CHL} for more details.\footnote{More precisely, \cite{AKO} deals with the mirror of a general del Pezzo surfaces, and our case corresponds to $k=0$ therein. \cite{CHL} considers an orbifold compactifcation of ours.})

\begin{figure}[h]
\includegraphics[scale=0.5]{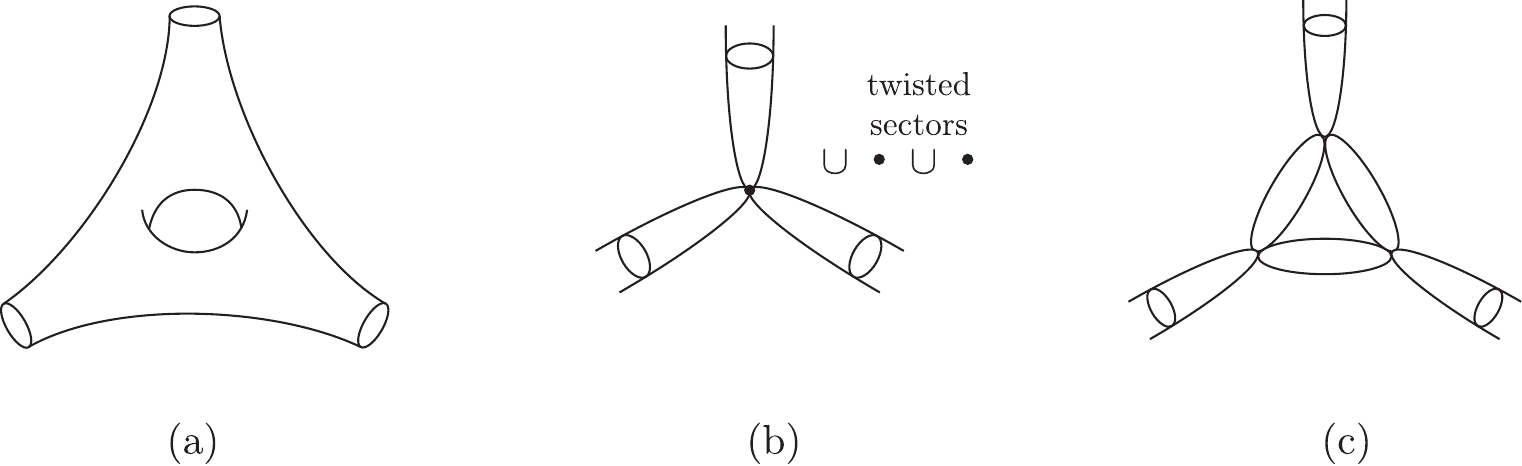}
\caption{Mirrors of the tri-punctured torus}
\label{fig:3puncell}
\end{figure}

Through our equivariant construction, its orbifold LG mirror is $[\mathbb{C}^3/\hat{G}]$ where the action is generated by $\chi \cdot (x,y,z) = (e^{2\pi i /3} x, e^{2\pi i /3} y, e^{2\pi i /3} z)$. The twisted sectors of $crit(W) /\hat{G}$ in this case does not have an extra $\mathbb{C}$-component which is consistent with the fact that $X$ only has three cylindrical ends. Instead it contains, in addition to $crit(W)$ (the untwisted sector), two isolated points supported over $(0,0,0)$ as twisted sectors associated with $\chi$ and $\chi^2$ in $\hat{G}$ (both of which fix $(0,0,0)$). 

On the other hand, the gluing construction produces the LG model on $K_{\mathbb{P}^2}$ as a mirror, whose associated critical loci is given as in (c) of Figure \ref{fig:3puncell}. Observe that it has (degenerate) elliptic curve as a part of the codimension 2 toric strata as well as three legs each isomorphic to $\mathbb{C}$. We believe that degree $1$ cocycles of this elliptic curve defined in a suitable sense (presumably in some cohomology group carrying a mixed Hodge
structure) should correspond to the two isolated points in the twisted sector in the orbifold mirror.

\vspace{0.5cm}

Extension of the construction in this paper to the case of a nonabelian symmetry $G$ on $X$ primarily requires a replacement of the equivariant Floer complex $CF_{\hat{G}} (\bL,\bL)$. 
%
%Indeed, one can identify this complex with the Floer complex of the union of Lagrangian branes supported over the same Lagrangian $\bL$ with various local systems (flat line bundles) coming from characters $\chi \in \hat{G}$. More precisely, a character $\chi$ determines a flat line bundle over $\bL$ which is nothing but the diagonal quotient of $\tilde{\bL} \times \mathbb{C}$. Here, $\tilde{\bL}$ is a chosen lifting of $\bL$ in $X$. By definition, the corresponding parallel transport jumps precisely by $\chi(g)$ at the self intersection point $x$ of $\bL$ if $x$ is the image of both $\tilde{x}$ and $g \cdot \tilde{x}$ for some $\tilde{x} \in \tilde{\bL}$. Along this line, it seems unavoidable to include higher rank local systems associated irreducible representations of $G$ when it is nonabelian. We expect that these lead to a new type of the orbifolded mirror LG model of $X$ when correctly performed, and leave for a future investigation.
%
Observe that $\hat{G} = \hom (G, U(1))$ can be thought of as the set of 1-dimensional (hence irreducible) representations of $G$. In fact, one can reformulate $CF_{\hat{G}} (\bL,\bL)$ in terms of flat line bundles on $\bL$ induced from these representations. Along this line, it seems unavoidable to include higher rank local systems associated higher dimensional irreducible representations of $G$ when $G$ is nonabelian. We expect that these lead to a new type of the orbifolded mirror LG model of $X$, and leave for a future investigation.

\vspace{0.3cm}

\begin{center}
{\bf Acknowledgement}
\end{center}
We would like to thank Cheol-Hyun Cho, Dahye Cho, Jungsoo Kang, Myeonggi Kwon, Jae Hee Lee, Kyoung-Seog Lee, Weiwei Wu for their valuable comments. We are grateful to the anonymous referee for a careful reading and many suggestions
that improved the quality of exposition.
The work of the first named author is partially funded by the Korea government (MSIT) (No. 2020R1C1C1A01008261 and  2020R1A5A1016126). The work of the third named author is supported by the Korea government(MSIT)(No. 202117221032) and Basic Science Research Program funded by the Ministry of Education (2021R1A6A1A10044154).

\section{Preliminaries}
In this section, we briefly review basic materials in symplectic cohomology and Lagrangian Floer theory relevant to our construction below, mainly to fix notations and clarify our geometric setup.
We will follow \cite{abo, RS} for the former, and \cite{FOOO} for the latter. 

Let $X$ be a Liouville manifold with an exact symplectic form $\omega= d\lambda$.
Then $X$ can be decomposed into a compact domain $X^{in}$ with contact boundary $(Y=\partial X^{in},\lambda|_Y)$ and its complement $X\setminus X^{in}$ symplectomorphic to the symplectization
\begin{equation}\label{cylindrical end}
(Y \times [1,+\infty),\>d(R \, \lambda|_Y)).
\end{equation}
where $R$ is the coordinate on $[1,+\infty)$ and $\lambda|_Y$ serves as a contact form on $Y$.
A component of $X\setminus X^{in}$ is called a cylindrical end.
In particular, when $X$ is of dimension $2$, each cylindrical end in $X$ is simply
\[C_1=(S^1\times [1,+\infty),\>  dR\wedge d\theta)\]
up to symplectomorphism.

\subsection{Symplectic cohomology}
Choose a generic Hamiltonian $H:X \to \mathbb{R}$ such that $H|_{X^{in}}:X^{in}\to \mathbb{R}$ is a $C^2$-small Morse-Smale function.
%{\color{red}DO WE NEED THIS? which is identically zero on the boundary, that is, but strictly negative in the interior of $X^{in}$}.
We assume that the slope of $H$ is linear in $R$ when $R$ is large enough with a small slope that the only 1-periodic orbits are critical points of $H|_{X^{in}}$. 
%in the direction of the Liouville vector field is a small enough number $\epsilon>0$ to guarantee that there is only constant 1-periodic orbits of $H$ in $X^{in}$.
%On the cylindrical end $Y\times(1,+\infty)$, we let $H$ be conical perturbed linear.
%Explicitly, we shall use a model Hamiltonian
%\[H=[\epsilon(r-1)]^{1-\rho(r)}[rp(y,t)]^{\rho(r)}\]
%where $p(y,t)\approx1$ is an arbitrarily close to 1(or log $C^2$-small) perturbation term defined on $Y\times S^1$ that makes $wH$ to have "non-degenerate" 1-periodic orbits on the cylindrical ends for all generic $w \geq 1$.
%Here the interpolation exponent $\rho(r)$ is a non-decreasing smooth function defined as
%\[
%\rho(r)=\left\{\begin{array}{ll} 0 & 1 < r \leq 1+\epsilon, \\ 1 & r \geq 2-\epsilon, \\ \frac{\exp\left(\frac{2r-3}{(r-1-\epsilon)(2-\epsilon-r)}\right)}{1+\exp\left(\frac{2r-3}{(r-1-\epsilon)(2-\epsilon-r)}\right)} & 1+\epsilon<r<2-\epsilon, \end{array}\right.
%\]
%whence it smoothly increases the interior autonomous Hamiltonian $F$ to the conical perturbed linear Hamiltonian $rp(y,t)$.
%Note that 1-periodic oribts of $H$ inside $X^{in}$ are exactly the critical points of $F$.
We then consider the family $\{w H\}_{w \in \mathbb{Z}_{>0}}$ of Hamiltonians and their associated Hamiltonian Floer cohomologies.
In what follows, the same notation $wH$ will denote (by abuse of notation) its perturbation by a small time-dependent function supported near its 1-periodic orbits in order to break $S^1$-symmetry. 
%In the case of $\dim_\mathbb{R} X = 2$, 
We obtain exactly two nondegenerate orbits from each $S^1$-family of 1-periodic orbit of (original) $wH = wH(R)$ after the perturbation. 
See \cite[Lemma 2.1]{CFHW} for more details.

As usual, the Floer complex $CF(wH)$ is generated by 1-periodic Hamiltonian orbits over the coefficient field $\Bbbk$ (taken to be $\mathbb{C}$ in our application), and we write $\mathfrak{d}$ for its differential that counts pseudo-holomorphic cylinders asymptotic to two orbits (of degree difference 1) at $\pm \infty$.
Now the symplectic cochain complex is defined as
\[SC^*(X):= \bigoplus_{w=1}^\infty CF(wH)[\bold{q}]\]
equipped with the differential $\mu_1:SC^*(X)\to SC^{*+1}(X)$ given by
\[\mu_1(x+\bold{q}y)=(-1)^{|x|} \mathfrak{d} x+(-1)^{|y|}(\bold{q} \mathfrak{d} y+\mathfrak{K}y-y).\]
where $\bold{q}$ is a formal variable of degree $-1$ such that $\bold{q}^2=0$.
Here $\mathfrak{K}:CF(wH)\to CF((w+1)H)$ denotes the Floer continuation maps (which is not an isomorphism in this case). 
One can alternatively define $SC^* (X)$ as a direct limit 
$$ SC^*(X) :=  \lim_{w \to \infty} CF (wH)$$
of the family $\{CF(wH)\}_{w \geq 1}$ under this continuation map.

We can make $SC^* (X)$ into an $A_\infty$-algebra extending $\mu_1$, for e.g., by letting $\mu_k$ counts pseudo-holomorphic maps from spheres with input/output punctures aligned along the equator. We will only consider $\mu_1$ and $\mu_2$ later, and hence, in practice, it is enough to restrict ourselves to the case $k \leq 2$.
To construct a moduli of such maps, one needs to make extra choices such as
 cylindrical end parametrization near each puncture, and a geodesic from each input puncture to the output puncture with a specified point on it (called flavor and sprinkle), which is to be partially discussed in the proof of Lemma \ref{lem:equatorsh} below. More details can be found in \cite[Section 4]{RS}.
%denotes the direct sum of orientation lines over $\Bbbk$ that corresponds to periodic orbits of $wH$:
%\[CF (wH) = \bigoplus_{x:\textrm{periodic orbit}}|o_x|.\]
%Then, with the Floer continuation maps $\mathfrak{K}:CF(wH)\to CF((w+1)H)$, the symplectic cochain complex is defined as
%\[SC^\ast(X):= \bigoplus_{w=1}^\infty CF(wH)[\bold{q}]\]
%where $\bold{q}$ is of degree -1 such that $\bold{q}^2=0$ with the differential $\partial:SC^\ast(X)\to SC^{\ast+1}(X)$ given by
%\[\partial(x+\bold{q}y)=(-1)^{|x|}\partial x+(-1)^{|y|}(\bold{q}\partial y+\mathfrak{K}y-y)\]
%In fact, we will assign to each pseudo-holomorphic map for $\mu_k$ the following extra datum indicating a special alignment of the punctures. 
%It does not genuinely increase the size of the moduli, but will be used in the equivariant construction to record the lifting information of the holomorphic curves in the quotient.

\begin{lemma}\label{lem:equatorsh}
On each domain for pseudo-holomorphic maps for $\mu_k$, one can choose 	a path from each input puncture to the output puncture in a manner that is consistent with the gluing of domains.
\end{lemma}
%
%The alignment of punctures we choose in the proof already appeared in \cite[4.6]{RS}, and is by no means unique. 

\begin{proof}
Although the choice is by no means unique, the most convenient one is to use a flavor that is a chosen geodesic from each input to output which already sits a part of moduli data. Below, we briefly recall how to obtain these paths. Let us denote by $z_0$ the output puncture, and take an input puncture $z_j$ in the domain $\Sigma$ of a pseudo-holomorphic map for $\mu_k$. A flavor for this pair is a choice of biholomorphic map $\psi_j : \Sigma \to \mathbb{P}^1$ such that $\psi_j(z_0) = \infty$ and $\psi_j(z_j) = 0$. The preimage of the positive real line in $\mathbb{P}^1$ under $\psi_j$ is now a path from $z_j$ to $z_0$ in $\Sigma$. 

Note that the choice of a direction $\theta_0 \in \mathbb{R}\mathbb{P} (T_{z_0} \Sigma)$ uniquely determines this path, as it singles out $\psi_j: \Sigma \to \mathbb{P}^1$ (uniquely up to a positive scaling on $\mathbb{P}^1$) by requiring $\psi_\ast (\theta_0)$ to point the positive real direction. Once $\theta_0$ is fixed, one can also coherently choose $\theta_j \in \mathbb{R}\mathbb{P} (T_{z_j} \Sigma)$ so that $(\psi_j)_\ast (\theta_j)$ is real positive. $\theta_j$'s are usually called the asymptotic markers. 

In our case, we may always take $\Sigma$ to be $\mathbb{P}^1$ with punctures aligned along the equator, and fix asymptotic markers  towards the positive real direction of the equator. (This agrees with the choice made in  \cite[4.6]{RS}.)\footnote{If we allow general positions for punctures $z_i$ and set asymptotic markers vary around in $S^1$-family, then the count of Floer solutions in the corresponding moduli will produce an $L_\infty$-structure. See \cite{BAS}}
Finally, we remark that one needs to slightly modify these paths near the punctures by choosing `sticks' so that they are consistent with gluings. See \cite[Definition 2.2]{BAS} (which is originally due to Abouzaid-Seidel \cite[2.4]{AS_w} in the case of wrapped Floer theory) for more details. 
%
%
%
%======
%
%In the cylindrical presentation of the domain for $\mu_k$ operations, we first fix a line $L:t=\pi$ and set asymptotic markers for $z_0$ and $z_1$ to avoid this line. We then connect the two asymptotic markers by a line that is not touching $L$. For $\mu_1$, .the two ends (punctures) are located at $s=\pm\infty$, and the asymptotic markers(cylindrical-end parametrizations) can be chosen so that one may choose the $t=0$ line for connecting the ends. 
%%This becomes a genuine circle (the equator $\mathbb{R} P^1$) if we replace the cylinder model by $\mathbb{C}P^1$ with two punctures at $z_0=0$ and $z_1=\infty$ corresponding two negative and positive ends respectively.
%When formulating moduli for $\mu_k$ with $k\geq2$, we view the domain as $\C P^1$ with punctures $z_0=0$, $z_1=\infty$, and $z_2,\ldots,z_k$ located at the line $(0,\infty)\subset \R P^1$. Meanwhile, it is always possible to choose cylindrical-end parametrizations for the punctures so that asymptotic markers are all pointing upward, that is, the direction which is increasing $t$ in the cylinder set-up. We then choose path from asymptotic marker for each input puncture to the asymptotic marker to the output that is not touching $L$, so that contained in $[0,\pi]\times\R$ while not touching $t=0,\pi$ lines. This data works well with the breaking of cylinders when the bubbling occurs, since we have allowed compact homotopy that makes the lines reside in $[0,\pi]\times\R$ in the gluing process.
\end{proof}

%A direct consequence of the lemma is that 
Thus, the domain of each pseudo-holomorphic maps $u$ has a canonical path from each input (positive puncture) to the output (negative puncture). 
%More precisely, the path follows the $t=0$ line of the cylindrical (positive) end corresponding to $i$-th marked point, runs along the equator of the fixed domain $\mathbb{C}P^1$, and merges into the $t=0$ line in the cylindrical (negative) end for the $0$-th marked point. We make the path avoid positive punctures $z_{i+1}, z_{i+2}, \cdots, $ in the middle by letting it go slightly `above' these punctures (this makes sense as we fixed the domain $\mathbb{C}P^1$) until it reaches $z_0$.
Its image under $u$ is a path in $X$ that joins starting points of the Hamiltonian orbit incident to the $i$-th puncture and that of the output orbit.
We denote this path by $\gamma_{u,i}$. A similar argument (using flavours) can be used to consistently choose paths from inputs to the output for other types of modulis such as the moduli of disks for Lagrangian Floer complex (see \ref{subsec:hatgequivlft}) or that of punctured disks for closed-open maps (see \ref{subsec:constksmap}). We will not repeat the argument for other modulis.

In the equivariant setting (with the existence of the group action) below, these data will be used to twist the algebraic structures in the symplectic cohomology. Note that the homotopy type of the paths $\gamma_{u,i}$ is preserved under the degeneration of pseudo-holomorphic maps, since the choice of paths in Lemma \ref{lem:equatorsh} is consistent with gluings. Hence the twist still gives rise to a well-defined algebraic structure.

\subsection{Symplectic cohomology of pair-of-pants}\label{subsec:SHP}
We give an explicit description of $SH^*(P)$ when $P$ is a tri-puntured sphere, as known as the pair-of-pants. It can be neatly represented in terms of $H^*_{log} (M,D)$ due to the work of \cite{GP20}, where we think $M=\mathbb{C}P^1$ as a compactification of $P$ with $D= M \setminus P$ consisting of three distinct points, say $q_\alpha$, $q_\beta$ and $q_\gamma$. The description is particularly simple in this case since $D$ is a set of points.

Following \cite[3.2]{GP20}, $H^*_{log} (M,D)$ is generated by $H^* (P)$ and the cohomology of three circles appearing as the boundary of the real blowup of $M$ along $D$. In our case, these circles can also be identified with boundary circles of small disk neighborhoods of $q_\alpha$, $q_\beta$ and $q_\gamma$. We denote these circles by $S_\alpha$, $S_\alpha$, $S_\alpha$ respectively, and choose (perfect) Morse functions $h_\alpha,h_\beta$ and $h_\gamma$ on them. Then the chain-level $H^*_{log} (M,D)$ is defined by
\begin{equation}\label{eqn:logcohommorse}
 CM^*(h;P) \oplus t_\alpha CM (h_\alpha;S_\alpha)[t_\alpha] \oplus t_\beta CM (h_\beta;S_\beta)[t_\beta] \oplus t_\gamma CM^* (h_\gamma;S_\gamma)[t_\gamma]
\end{equation}
where $h=H$ (regraded as a generic Morse function on $P$), and $t_\alpha,t_\beta,t_\gamma$ are formal variables of degree $1$. Since we are using perfect Morse functions, the differential on \eqref{eqn:logcohommorse} automatically vanishes. Moreover, $CM (h_\alpha;S_\alpha)$ is generated by two critical points $e_\alpha$ and $f_\alpha$ with $|e_\alpha|=0$ and $|f_\alpha|=1$, and similar for $\beta$ and $\gamma$. We set $e,f_1,f_2$ for critical points of $h=H$ as shown in Figure \ref{fig:hforpop}. Recall that the actual Hamiltonian we use to define $SH^*(P)$ is its time-dependent perturbation.

\begin{figure}[h]
\includegraphics[scale=0.55]{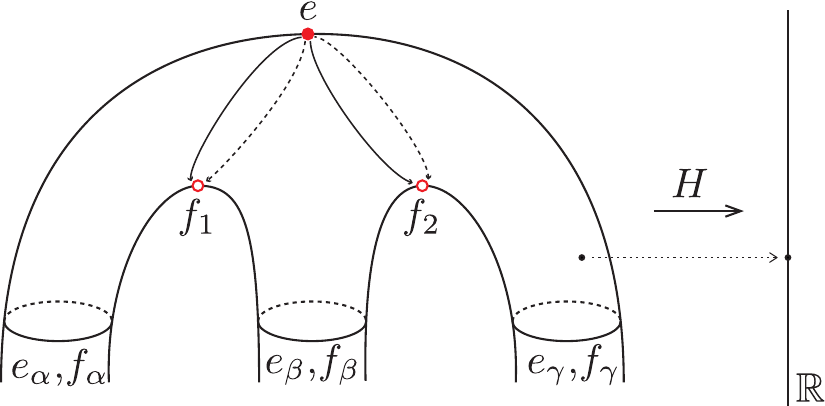}
\caption{Choice of the Hamiltonian $H$ on $P$ and its critical points in $P^{in}$}
\label{fig:hforpop}
\end{figure}

The product structure is given in the form of
$$ t^v x \cdot t^w y = t^{v+w} x \star y$$
where $x \star y$ between two critical points of one of $h_i$'s counts usual $Y$-shape flow lines composed with the restriction map $CM^* (h;P) \to CM (h_\alpha;S_\alpha)$ (and similarly for $\beta$ and $\gamma$) {\it cf}. Figure 4 in \cite{GP20}. In our case, one can deduce all the products from 
\begin{equation}\label{eqn:logcohommorse1}
\begin{array}{c}
 f_1 \star e_\alpha= f_\alpha, \quad  e_\alpha \star e_\alpha =e_\alpha, \quad e_\alpha \star f_\alpha = f_\alpha \\
 f_1 \star e_\beta= - f_2 \star e_\beta = f_\beta, \quad  e_\beta \star e_\beta =e_\beta, \quad e_\beta \star f_\beta = f_\beta \\
 f_2 \star e_\gamma= f_\gamma, \quad  e_\gamma \star e_\gamma =e_\gamma, \quad e_\gamma \star f_\gamma = f_\gamma 
\end{array}
\end{equation}
together with the fact that $e$ is the unit. (Note that $e_\alpha$ and $f_\alpha$ themselves do not belong to the cochain complex.) 
%{\color{red} better choose $\mathbb{Z}/3$-symmetric $f=H$?}

By \cite[Theorem 1.4, Example 1.1]{GP20}, the (low-energy) PSS map gives a ring isomorphism, which do not involve any complicated counting in our case. Under the PSS map, one can identify $e_\alpha t_\alpha^i$ and $f_\alpha t_\alpha^i$ with the two nondegenerate orbits near $q_\alpha$ with winding number exactly $i$, and similar for $\beta$ and $\gamma$. From now on, we will not strictly distinguish  an element of \eqref{eqn:logcohommorse} and its corresponding Hamiltonian orbit in $SC^*(P)$ unless there is a danger of confusion.

%Note that each component of the cylindrical end is symplectomorphic to the expanding cylinder $C_A=((0,\infty)\times S^1, Ae^rdr\wedge d\theta)$ by a reparametrization of $S^1$.
%Here $A>0$ is a constant assigned on the chosen component.
%Since such a symplectomorphism does not involve the radial coordinate, the quadratic Hamiltonian pulls back to the quadratic Hamiltonian again.
%Thus it suffices to confine our attention to the question that how the 1-periodic orbits of the quadratic function $H(r,\theta)=r^2$ in $C_A$ looks like.

$SC^*(P)$ is graded once we choose a volume form on $P$ and trivialize $TP(=T^\mathbb{C} P)$ accordingly. In our case, we will find it more convenient to use the section of $\Omega_{P}^{\otimes 3}$ to keep the symmetry. Namely, we choose a cubic volume form $\Omega$ (locally $\Omega= \varphi (z) dz^3$) on $S^2$ which has a double pole at each of points in $D= \{q_\alpha, q_\beta, q_\gamma\}$. For e.g., one may take $\frac{dz}{z^2(z-1)^2}$ for $q_\alpha = 0, q_\beta=1, q_\gamma=\infty$. 
It trivializes $TP^{\otimes 3}$, and for a Hamiltonican orbit $\gamma$, the linearized Hamiltonian flow $ TP^{\otimes 3} \to TP^{\otimes 3}$ gives a path of symplectic matrices, and hence the associated Conley-Zehnder index $CZ_{TP^{\otimes 3}} (\gamma)$. Our grading is then $|\gamma| := \frac{1}{3}CZ_{TP^{\otimes 3}} (\gamma)$. 

This type of grading appeared in, for e.g., McLean \cite[4.1]{Mc1} for Reeb orbits, and more detailed explanation (for Hamiltonian orbits) can be found in \cite[3.1]{DL}, where it is referred to as the Seidel-Mclean grading.
The upshot is the fractional grading on $SC^* (P)$ for which 
$$|t_\alpha| =|t_\beta|=|t_\gamma|= \frac{2}{3}, \quad |e_\alpha|=|e_\beta|=|e_\gamma|= 0, \quad |f_\alpha|=|f_\beta|=|f_\gamma|=1.$$
The component $CM^* (h;P)$ carries the usual $\mathbb{Z}$-grading, i.e., $|e| =0, |f_1|=|f_2|=1$. (The $A_\infty$-operation $\mu_k$ is of degree $2-k$ with respect to this grading.)  We will use an analogous fractional grading for Lagrangian Floer theory on $P$. Multiplying $3$ to this fraction degrees recovers the usual $\mathbb{Z}/2$-grading on $SC^* (P)$.

\subsection{Orbifold LG B-models}\label{subsec:orbLGB}
Our mirror LG-model $W:\Bbbk^n \to \Bbbk$ will carry an action of a finite abelian group $H$.
We introduce the construction of the closed-string $B$-model invariant attached to the associated orbifold setting.

Let $\theta_i$ and $\partial_{\theta_i}$ (for $i=1,\cdots,n$) be variables with $|\theta_i|=-1$, $|\partial_{\theta_i}|=1$ and
\begin{equation}\label{eq:cliffrel}
 \theta_i \theta_j=-\theta_j \theta_i,\;\; \partial_{\theta_i}\partial_{\theta_j}=-\partial_{\theta_j}\partial_{\theta_i},\;\; \partial_{\theta_i}\theta_j=-\theta_j\partial_{\theta_i}+\delta_{ij}
 \end{equation}
We recall an elementary definition in terms of above variables.
\begin{defn}
Let $R$ be a commutative ring. For a sequence $(r_1,\cdots,r_n) \subset R$, 
\[ K^*(r_1,\cdots,r_n):=(R\langle \theta_1,\cdots,\theta_n \rangle, \sum_i r_i \partial_{\theta_i})\]
is called the {\em Koszul complex} of $(r_1,\cdots,r_n)$. When $(r_1,\cdots,r_n)$ is given as the Jacobian ideal of some $W$, then we will write the corresponding Koszul complex as $(K^* (\partial W), d_W)$.
\end{defn}
The cochain complex $K^*(r_1,\cdots,r_n)$ is explicitly
\[\displaystyle \xymatrix{
 0 \ar[r] & R\cdot \theta_1\cdots\theta_n \ar[r]^-{\sum_i r_i\partial_{\theta_i}} 
 & \bigoplus\limits_{i_1<\cdots<i_{n-1}} R \cdot \theta_{i_1}\cdots\theta_{i_{n-1}} \ar[r] & \cdots \ar[r] & \bigoplus\limits_i R\cdot \theta_i \ar[r]^-{\sum_i r_i\partial_{\theta_i}} & R \ar[r] & 0.} \]
Let us denote $K^{-m}(r_1,\cdots,r_n)=\bigoplus\limits_{i_1<\cdots<i_m}R\cdot \theta_{i_1}\cdots\theta_{i_m}$.

The following notion is an important categorical invariant of singularities.
\begin{defn}
Let $R$ be a commutative ring. A {\em matrix factorization} of $W \in R$ is a $\Z/2$-graded projective $R$-module $P=P_0\oplus P_1$ together with a morphism $d=(d_0,d_1)$ of degree 1 such that
\[ d^2=W\cdot \id.\] 
Let $\phi: (P,d) \to (Q,d')$ be a morphism of degree $j\in \Z/2$.  Define
\[ D\phi:=d'\circ \phi -(-1)^{|\phi|}\phi \circ d\]
and define the composition of  morphisms in the usual sense. This defines a dg-category of matrix factorizations $(MF(W),D,\circ)$.
\end{defn}

\begin{defn}
Let $W\in R$ and let $H$ be a group which acts on $R$ leaving $W$ invariant. An {\em $H$-equivariant matrix factorization} of $W$ is a matrix factorization $(P,d)$ where $P$ is equipped with an $H$-action, and $d$ is $H$-equivariant. An {\em $H$-equivariant morphism} $\phi$ between two $H$-equivariant matrix factorizations $(P,d)$ and $(Q,d')$ is an $H$-equivariant morphism of $\Z/2$-graded $R$-modules $P$ and $Q$. Again, $H$-equivariant matrix factorizations form a dg-category $(MF_H(W),D,\circ)$.
\end{defn}

From now on we will consider $W\in R=\Bbbk[x_1,\cdots,x_n]$ and $H$ is a finite abelian group acting on $R$ leaving $W$ invariant. We call the pair $(W,H)$ an {\em orbifold LG B-model.} Furthermore we will assume that $H$ acts on $R$ diagonally, which means that for $h\in H$, 
\begin{equation}
h\cdot x_i = h_i x_i
\end{equation}
for some $h_i \in \Bbbk^*$. We introduce the following notation:
\[ x_i^h:=\begin{cases}
x_i & {\textrm{ if }} h\cdot x_i=x_i,\\
0 & {\textrm{ if }} h\cdot x_i \neq x_i.
\end{cases}
\]
%\begin{defn}
Let $S=\Bbbk[x_1,\cdots,x_n,x_1',\cdots,x_n']$. Given an orbifold LG model $(W,H)$.
  \[\nabla_i W:= \frac{W(x_1',\cdots,x_i',x_{i+1},\cdots,x_n)-W(x_1',\cdots,x_{i-1}',x_i,\cdots,x_n)}{x_i'-x_i}\in S\] 
and define an $(H\times H)$-equivariant matrix factorization of $W\boxminus W:=W(x_1',\cdots,x_n')-W(x_1,\cdots,x_n)$ by
\[ \Delta_W^{H \times H}:=\bigoplus_{h\in H}\Big( S\langle\theta_1,\cdots,\theta_n\rangle,\sum_{i=1}^n \big((x_i'-h\cdot x_i)\theta_i+\nabla_i W \cdot\partial_{\theta_i}|_{h\cdot x \to x}\big)\Big),\]
with $H$-action on variables $\{\theta_1,\cdots,\theta_n,\partial_{\theta_1},\cdots,\partial_{\theta_n}\}$ defined by
 \[ h\cdot\theta_i:=h_i^{-1} \theta_i, \;\; h\cdot\partial_{\theta_i}:=h_i \partial_{\theta_i}\]
when $h\cdot x_i=h_i x_i$. Also, for any $S$ (or $R$)-linear map $F$, $F|_{h\cdot x \to x}$ means that we replace all variables $\{x_1,\cdots,x_n\}$ in $F$ by $\{h\cdot x_1,\cdots,h\cdot x_n\}$ accordingly.
% The {\em orbifold Koszul algebra} of $(W,H)$ is defined by
% \begin{equation}\label{def:orbKos}
%     Kos(W,H):= \Hom_{MF_{H\times H}(W\boxminus W)}
%     (\Delta_W^{H \times H},\Delta_W^{H \times H}).
% \end{equation}

% \end{defn}
Denote each $h$-summand of $\Delta_W^{H\times H}$ by $\Delta_W^h$. We recall the following.
% , namely
% \begin{equation}\label{eq:deltah}
% \Delta_W^h := \Big( S\langle\theta_1,\cdots,\theta_n\rangle,\sum_{i=1}^n \big((x_i'-hx_i)\theta_i+\nabla_i W|_{h\cdot x \to x} \cdot\partial_{i}\big)\Big).
% \end{equation}

\begin{lemma}[\cite{CLe}]\label{lem:equivmfHH}
We have a quasi-isomorphism
\[ hom_{MF_{H \times H}(W\boxminus W)}(\Delta_W^{H\times H}, \Delta_W^{H\times H})\cong 
\big(\bigoplus_{h\in H} hom_{MF(W\boxminus W)}(\Delta_W^1,\Delta_W^h)\big)^H.\]
\end{lemma}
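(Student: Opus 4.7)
The plan is to unfold both sides using the direct-sum decomposition $\Delta_W^{H\times H} = \bigoplus_{h\in H}\Delta_W^h$ and to reduce the $(H\times H)$-invariants to $H$-invariants of a single direct sum, exploiting the fact that one of the two $H$-factors acts by translation on the summand index.

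First, I would use the general fact that for any two $(H\times H)$-equivariant matrix factorizations $P, Q$, the equivariant morphism complex is the $(H\times H)$-invariants of the ordinary morphism complex, and expand the left-hand side as
\[
\Big(\bigoplus_{g,h \in H} hom_{MF(W\boxminus W)}(\Delta_W^g, \Delta_W^h)\Big)^{H\times H}.
\]

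Next I would analyze how $H\times H$ permutes and twists this double sum. By direct substitution into the definition of the differential on $\Delta_W^h$, using the twist $k\cdot \theta_i = k_i^{-1}\theta_i$ and the $H$-invariance of $W$ (which forces $\nabla_i W$ to transform predictably), one checks that the first $H$-factor (acting through the primed variables $x_i'$) induces canonical isomorphisms $\Delta_W^h \xrightarrow{\sim} \Delta_W^{k^{-1}h}$ in $MF(W\boxminus W)$, while the second $H$-factor (acting through the unprimed variables $x_i$) preserves each summand and acts internally. Thus, after identifying $\Delta_W^h$ with $\Delta_W^{k^{-1}h}$, the first $H$ acts on the double sum by the diagonal translation $(g,h) \mapsto (k^{-1}g, k^{-1}h)$, while the second $H$ acts by the diagonal of its actions on source and target.

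Taking invariants under the (translation) first $H$-factor identifies each orbit $\{(k^{-1}g, k^{-1}h)\}_{k\in H}$ with its canonical representative $(1, hg^{-1})$, yielding a chain-level isomorphism
\[
\Big(\bigoplus_{g,h \in H} hom_{MF(W\boxminus W)}(\Delta_W^g, \Delta_W^h)\Big)^{H_{\mathrm{first}}} \xrightarrow{\;\sim\;} \bigoplus_{h \in H} hom_{MF(W\boxminus W)}(\Delta_W^1, \Delta_W^h),
\]
realized, for instance, by restriction to the slice $g=1$ (or, equivalently, by averaging over the translation orbits). Finally, taking invariants under the remaining $H$-factor produces the right-hand side, since this action descends to the diagonal action on each summand $hom_{MF(W\boxminus W)}(\Delta_W^1, \Delta_W^h)$.

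The main obstacle is the bookkeeping in the second step: one has to verify that the prescribed twist $k\cdot \theta_i = k_i^{-1} \theta_i$ and $k \cdot \partial_{\theta_i} = k_i\partial_{\theta_i}$ is precisely what conjugates the differential of $\Delta_W^h$ into that of $\Delta_W^{k^{-1}h}$ when $x_i'$ is simultaneously transformed by $k$, so that averaging over the first $H$ is an honest chain map. Once this compatibility is established, the remainder of the argument is formal manipulation of invariants of a finite-group action on a direct sum.
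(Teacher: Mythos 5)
The paper cites this lemma directly from \cite{CLe} and does not supply its own proof, so there is no in-text argument to compare against. Your outline is the natural Mackey-type argument for equivariant morphism complexes: unfold $\Delta_W^{H\times H}=\bigoplus_h\Delta_W^h$, write the $(H\times H)$-equivariant hom as $(H\times H)$-invariants of $\bigoplus_{g,h}hom_{MF}(\Delta_W^g,\Delta_W^h)$, observe that $H\times H$ acts by permuting the pairs $(g,h)$ along orbits determined by the ``difference'' $hg^{-1}$ with stabilizer the diagonal copy of $H$, and conclude by restricting to the slice $g=1$. This is essentially the only reasonable way to prove the statement, and your overall structure is sound.

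One detail is stated incorrectly, though it does not damage the argument. You claim the second $H$-factor (acting on the unprimed $x_i$) preserves each summand $\Delta_W^h$ and acts internally. It does not: since the differential of $\Delta_W^h$ contains $(x_i'-h\cdot x_i)\theta_i$, the $(1,k)$-action sends $\Delta_W^h$ to $\Delta_W^{hk}$, so both factors of $H\times H$ translate the summand index (in opposite directions). The subgroup that actually fixes each $\Delta_W^h$ is the diagonal $\{(k,k)\}\subset H\times H$, not the second factor. What makes your conclusion still correct is that once you average over the first factor and represent each orbit on the slice $g=1$, the residual action of the second factor, when transported back to the slice, is exactly the diagonal action. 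This is precisely what you claim at the end; it is only the intermediate description of ``internal'' versus ``translating'' factors that needs to be corrected, for instance by identifying the stabilizer of the slice rather than asserting that either factor fixes the summands on the nose.
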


Let us define following (ordered) subsets of $\{1,\cdots,n\}$:
\[ I_h:= \{i\mid hx_i \neq x_i\},\quad  I^h:=I_h^c.\]
Recall that if $W$ has only isolated singularity, then $\Hom_{MF_{H \times H}(W\boxminus W)}(\Delta_W^{H\times H}, \Delta_W^{H\times H})$ is isomorphic to the orbifold Jacobian algebra $\Jac(W,H)$ which is given by following:
\[ \Jac(W,H)= \big( \bigoplus_{h\in H} \Jac(W^h) \cdot \xi_h \big)^H\]
%where $W^h$ is a coset of $W$ in $R/(x_i: i\in I_h)$ and $\xi_h$ is a formal generator of degree $d_h:=|I_h|$.
where 
\begin{itemize}
\item $W^h=W(x_1^h,\cdots,x_n^h)\in k[x_1^h,\cdots,x_n^h] $, 
\item $\Jac(W^h)=k[x_1^h,\cdots,x_n^h]/\partial W^h$ where $\partial W^h:=(\partial_{x_i}W^h:i\in I^h)$ is the Jacobian ideal of $W^h$,
\item $\xi_h$ is a formal generator of degree $d_h:=|I_h|$. 
\end{itemize}
For general $W$ which may have nonisolated singular locus, we still have the following result.

\begin{prop}\label{prop:mfkos}
We have a module isomorphism
\begin{equation}\label{eq:mfkos}
    \Hom_{MF_{H \times H}(W\boxminus W)}(\Delta_W^{H\times H}, \Delta_W^{H\times H}) \cong \big( \bigoplus_{h\in H} H^*\big(K^*(\partial W^h )\big) \cdot \theta_{I_h}  \big)^H.
\end{equation}

\end{prop}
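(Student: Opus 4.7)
The plan is to apply Lemma~\ref{lem:equivmfHH} to rewrite the equivariant Hom as $H$-invariants of a direct sum over $h\in H$ of ordinary Hom complexes $\hom_{MF(W\boxminus W)}(\Delta_W^1,\Delta_W^h)$, and then to identify each such summand up to quasi-isomorphism with $H^\ast(K^\ast(\partial W^h))\cdot\theta_{I_h}$. Geometrically, $\Delta_W^h$ presents the graph of $h$ acting on the derived critical locus of $W$, so morphisms from the diagonal $\Delta_W^1$ should localize to the intersection of the two graphs, which is $(\mathrm{Fix}\,h)\cap \mathrm{Crit}(W)$; the surviving complex is Koszul in $\partial W^h$, and the $\theta_{I_h}$-shift accounts for the normal directions to the fixed locus.

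First I would change variables by setting $u_i = x_i' - x_i$ for $i\in I^h$ and $y_i = x_i'-h\cdot x_i$ for $i\in I_h$. Under this substitution, the linear part of $d_h$ becomes $\sum_{i\in I^h}u_i\theta_i+\sum_{i\in I_h} y_i\theta_i$, while on the common vanishing locus $\{u_i=y_j=0\}$ one has $x_i = x_i'$ for $i\in I^h$ and $x_j=x_j'=0$ for $j\in I_h$. Consequently $\nabla_i W$ restricts there to $\partial_{x_i}W^h$ for $i\in I^h$, while the remaining terms are absorbed by the regular sequence $\{u_i\}_{i\in I^h}\cup\{y_j\}_{j\in I_h}$.

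Second I would regard $\hom_{MF(W\boxminus W)}(\Delta_W^1,\Delta_W^h)$ as a matrix factorization of $0$, i.e.\ a $\Z/2$-graded complex of free $S$-modules, and carry out a Kn\"orrer-type contraction along this regular sequence. The Clifford generators $\theta_i$ coming from $\Delta_W^1$ pair with $u_i$ (for $i\in I^h$) or with $y_i$ (for $i\in I_h$) in the Hom differential and can be killed successively, while the $\theta_i$ with $i\in I_h$ originating from $\Delta_W^h$ lack a partner on the $\Delta_W^1$-side and survive as the global factor $\theta_{I_h}$. The residual differential on the reduced complex is $\sum_{i\in I^h}\partial_{x_i}W^h\cdot\partial_{\theta_i}$, whose cohomology is by definition $H^\ast(K^\ast(\partial W^h))$.

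Third I would transfer the $H$-actions. The action on $\Delta_W^{H\times H}$ defined by $h\cdot\theta_i = h_i^{-1}\theta_i$ and $h\cdot\partial_{\theta_i}=h_i\partial_{\theta_i}$ descends through the contraction to the natural diagonal $H$-action on $\bigoplus_h H^\ast(K^\ast(\partial W^h))\cdot\theta_{I_h}$ induced from the $H$-action on $R$ together with the Clifford weights, so taking $H$-invariants and combining with Lemma~\ref{lem:equivmfHH} yields \eqref{eq:mfkos}. The main technical obstacle will be justifying the Kn\"orrer-type contraction when $W$ has non-isolated singularities, where the standard filtration argument does not immediately collapse; a careful explicit-homotopy construction, or a spectral-sequence argument degenerating on the first page, should suffice. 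Since only a module isomorphism is asserted, no matching of algebra structures is required, which avoids the bulk of the bookkeeping appearing in the isolated-singularity version.
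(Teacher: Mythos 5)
Your overall plan — invoke Lemma~\ref{lem:equivmfHH} and then identify each $\hom_{MF(W\boxminus W)}(\Delta_W^1,\Delta_W^h)$ with a Koszul complex by contracting linear terms — is the right strategy and is also what the paper does, but the central Kn\"orrer-type step in your second paragraph is not correct as stated, and the mechanism you give for the appearance of $\theta_{I_h}$ is wrong.

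First, your change of variables supplies only $n$ linear forms ($u_i$ for $i\in I^h$ and $y_i$ for $i\in I_h$), whereas the hom complex $\hom(\Delta_W^1,\Delta_W^h)\cong S\langle\theta_\bullet\rangle\otimes S\langle\theta_\bullet\rangle^\vee$ has $2n$ odd generators interacting with $2n$ linear terms ($x_i'-x_i$ from $\Delta_W^1$ and $x_i'-h\cdot x_i$ from $\Delta_W^h$). On the locus $\{u_i=y_j=0\}$ one has $x_i'=x_i$ for $i\in I^h$ and $x_j'=h_jx_j$ for $j\in I_h$, \emph{not} $x_j=x_j'=0$: vanishing of $y_j$ alone does not force $x_j=0$. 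So the asserted reduction $\C[x,y,z,\dots]\rightsquigarrow R^h$ does not follow from contracting those $n$ forms; you would need to also use the remaining $u_j=x_j'-x_j$ for $j\in I_h$ (equivalently, first pass to the tensor-product model $\Delta_W^1\otimes_S S/(x'-hx)$ as the paper does, which identifies $x'$ with $h\cdot x$ and leaves only $n$ Clifford generators).

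Second, the generators $\theta_i$ with $i\in I_h$ do not ``lack a partner'': after the first contraction they remain coupled to $(h_i-1)x_i$, and what survives is the cohomology of the induced Koszul complex driven by the wedge operator $\sum_{i\in I_h}(h_i-1)x_i\,\theta_i\,\wedge$. For a regular sequence this concentrates in the top exterior power, so the surviving piece is $R^h\cdot \theta_{I_h}$, not $R\cdot\theta_{I_h}$; both the factor $\theta_{I_h}$ \emph{and} the quotient $R\to R^h$ come out of this single contraction. Your framing hides this, which is why you arrived at the (incorrect) shortcut $x_j=x_j'=0$. Finally, you rightly flag that when $\mathrm{Crit}(W)$ is not isolated one must justify that there are no higher differentials; the paper closes this by a short explicit check that $d_2=0$ on cocycles (from which $d_{\geq 2}=0$ follows), and some such argument is necessary, not merely ``should suffice.'' The paper's overall route is therefore: replace $\hom(\Delta_W^1,\Delta_W^h)$ by $\Delta_W^1\otimes_S S/(x'-hx)$ via the MCM/self-duality identification, split the resulting differential into horizontal (linear) and vertical (Jacobian) parts, take the $E_1$ page to land on $K^\ast(\partial W^h)\cdot\theta_{I_h}$, and then verify degeneration; your sketch should be rewritten along those lines.
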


\begin{proof}
The matrix factorization $\Delta_W^h$ is constructed by the resolution of a shifted MCM module $S/(x_1'-h\cdot x_1,\cdots,x_n'-h\cdot x_n)[-n]$ over the hypersurface ring $S/(W\boxminus W)$. Hence we have
\begin{align}
hom_{MF(W\boxminus W)}(\Delta_W^1,\Delta_W^h) & \simeq hom_S\big(\Delta_W^1,S/(x_1'-h\cdot x_1,\cdots,x_n'-h\cdot x_n)[-n] \big)\nonumber\\
&\simeq hom_S\big(\Delta_W^1[n],S/(x_1'-h\cdot x_1,\cdots,x_n'-h\cdot x_n) \big)\nonumber\\
 &\simeq (\Delta_W^1)^\vee[-n] \otimes_S \big(S/(x_1'-h\cdot x_1,\cdots,x_n'-h\cdot x_n)\big). \label{homcomplex}
 \end{align}
By self-duality of Koszul matrix factorizations, $(\Delta_W^1)^\vee[-n]$ is quasi-isomorphic to $\Delta_W^1$. Hence we have
\[ \eqref{homcomplex} \simeq \big( R\langle\theta_1,\cdots,\theta_n\rangle,d|_{h\cdot x \to x'} \big).\]
The latter is the following $\Z/2$-graded double complex: 

\[\xymatrixcolsep{0.8pc}\xymatrix{
& && & & \vdots& & \vdots & \vdots & \\
& &&& 0 \ar[r] & R\ar[u]_{d_{vert}} \ar[r]^{d_{hor}} & \cdots \ar[r]^-{d_{hor}} & \displaystyle \bigoplus_{1\leq i_1<\cdots<i_{n-1}\leq n}R\cdot \theta_{\{i_1,\cdots,i_{n-1}\}}\ar[u]_{d_{vert}} \ar[r]^-{d_{hor}} & R\cdot \theta_{\{1,\cdots,n\}} \ar[u]_{d_{vert}}\ar[r]  & 0 \\
&& 0\ar[rr] && R \ar[r]^-{d_{hor}}\ar[u] & \displaystyle\bigoplus_{i=1}^n R\cdot \theta_i \ar[u]_{d_{vert}}\ar[r]^-{d_{hor}}&\cdots \ar[r]^-{d_{hor}} & R\cdot \theta_{\{1,\cdots, n\}} \ar[u]_-{d_{vert}}\ar[r] & 0 \ar[u]& \\
0\ar[rr] && R \ar[u]\ar[rr]^-{d_{hor}} && \displaystyle\bigoplus_{i=1}^n R\cdot \theta_i \ar[r]^-{d_{hor}} \ar[u]_-{d_{vert}} & \cdots \ar[r]^-{d_{hor}} & R\cdot \theta_{\{1,\cdots,n\}}\ar[r] &0\ar[u]& &\\
&& \vdots \ar[u]_-{d_{vert}} && \vdots \ar[u]_-{d_{vert}} & & \vdots\ar[u]_-{d_{vert}} & & &
}
\]
with
\[d_{hor}:=\sum_{i=1}^n (h\cdot x_i-x_i)\theta_i,\quad d_{vert}:=\sum_{i=1}^n \nabla_i W\cdot\partial_{\theta_i} |_{h\cdot x \to x'}.\]
% As above, $\nabla_i W|_{h\cdot x \to x'}$ means that we replace variables $\{x_1',\cdots,x_n'\}$ in $\nabla_i W$ by $\{h\cdot x_1,\cdots,h\cdot x_n\}$.
We compute its cohomology by spectral sequence from vertical filtration. The first page is the cohomology with respect to $d_{hor}$. The $i$th row is 
\[\xymatrix{
0 \ar[r] & R\ar[r]^-{d_{hor}} &\bigoplus_{i=1}^n R\cdot \theta_i \ar[r]^-{d_{hor}} & \cdots \ar[r]^-{d_{hor}} &R\cdot \theta_{\{1,\cdots,n\}} \ar[r] & 0,}
\] 
and it is quasi-isomorphic to the following complex:
\[\xymatrix{
0 \ar[r] &  R^h\cdot \theta_{I_h} \ar[r]^-0 & \displaystyle  \bigoplus_{i \in I^h} R^h\cdot\theta_i \theta_{I_h}\ar[r]^-0
&\displaystyle \bigoplus_{\stackrel{i_1<i_2}{ i_1,i_2\in I^h}}R^h \cdot \theta_{i_1}\theta_{i_2}\theta_{I_h}}\]
\[\xymatrixcolsep{1pc}\xymatrix{
\; \ar[r]^-0 & \cdots \ar[r]^-0 &\displaystyle \bigoplus_{\stackrel{i_1<i_2<\cdots<i_{|I^h|-1}}{i_1,\cdots,i_{|I^h|-1}\in I^h}} R^h\cdot \theta_{i_1}\cdots \theta_{i_{|I^h|-1}}\theta_{I_h} \ar[r]^-0 & R^h \theta_{\{1,\cdots,n\}} \ar[r] & 0
}\]
where $R^h:=R/(x_i: i\in I_h)$. Therefore, the induced differential $d_1$ on $E_1$ is given by
\[ \sum_{i\in I^h} \nabla_i W(x,h\cdot x)\partial_{\theta_i} = \sum_{i\in I^h} (\partial_i W^h) \partial_{\theta_i},\]
hence each column is isomorphic to the (degree-)shifted Koszul complex of the Jacobian ideal $\partial W^h$. Here the column is referred to be shifted in the sense that $\theta_{I_h}$ is multiplied to the original complex.

It remains to show that $d_i=0$ on $E_i$ for all $i\geq 2$. Let $\displaystyle \sum_{J: I_h \subset J} f_J \theta_J$ be a cocycle of $(E_1,d_1)$. Then we have
\[ \sum_{\stackrel{J: I_h \subset J}{i\in J}}\nabla_i W(x,h\cdot x) f_J \frac{\partial \theta_J}{\partial \theta_i} = \sum_{j\in I_h}(h\cdot x_j-x_j)\theta_j \big( \sum_K g_K \theta_K \big)\]
for some $\sum_K g_K\theta_K$, so 
\[d_2 (\sum f_J \theta_J)=\sum_i\sum_{K} \nabla_i W(x,h\cdot x)g_K \frac{\partial\theta_K}{\partial\theta_i}.\] 
But each summand $g_K \theta_K$ is given by $K= J'-{j}$ for some $J'$ and $j \in I_h$, so 
$d_2=0$ on $E_2$. It implies that $d_{\geq 2}=0$.
%Fix a summand $\nabla_i W(x,hx) f_J \frac{\partial \theta_J}{\partial \theta_i}$ of LHS.
\end{proof}
% Since $\Hom_{MF_{H \times H}(W\boxminus W)}(\Delta_W^{H\times H}, \Delta_W^{H\times H})$ is an algebra, we can equip $\big( \bigoplus_{h\in H} H^\ast\big(K^\ast(\partial W^h )\big) \cdot \theta_{I_h}  \big)^H$ with the product structure. 

\begin{defn}\label{def:orblgB}
Given an orbifold LG model $(W,H)$,
\[ Kos(W,H):=\big( \bigoplus_{h\in H} H^*\big(K^*(\partial W^h )\big) \cdot \theta_{I_h}  \big)^H\]
is called its {\em orbifold Koszul algebra} whose product structure is given by the isomorphism \eqref{eq:mfkos}. (When $H=1$, we simply write $Kos (W)$ which agrees with the cohomology of $(K^* (\partial W),d_W)$.)
\end{defn}

\subsection{Finite group action on Lagrangian Floer theory and semidirect product}\label{subsec:BmLF}

 We briefly recall the construction of semidirect product $\AI$-algebras following \cite{CLe}. Its (weak) Maurer-Cartan deformation gives rise to the mirror orbifold LG model to which we will apply the construction in \ref{subsec:orbLGB}. Provided that the Lagrangian Floer complex is well-defined as an $A_\infty$-algebra, the construction below is entirely an algebraic procedure on this $A_\infty$-algebra. We will provide a detailed explanation on the technical aspects of defining the Lagrangian Floer complex in our geometric setup later in Section \ref{sec:KSRS} (see \ref{subsec:constksmap}).

 Consider a symplectic manifold $X$ (convex at infinity if noncompact) on which a finite abelian group $G$ acts. 
 In cases in which we are interested, we always have a (possibly immersed) Lagrangian submanifold $\bL \subset X/G $ with the following properties: there is an embedded Lagrangian lift $\bL_1\subset X$ of $\bL$, and if we let $\bL_g:=g\cdot \bL_1$ for $g\in G$, we have $\bL_{g_1} \neq \bL_{g_2}$ whenever $g_1 \neq g_2$.

After fixing the lift $\bL_1$, we have a natural identification of Floer complexes:
\[ CF(\bL,\bL) \cong \bigoplus_{g\in G}CF(\bL_1,\bL_g).\]
From now on we will freely use this identification for $CF(\bL,\bL)$.

Let $\HG=\Hom(G,U(1))$ be the character group of $G$. Following \cite[Chapter 7]{CHL2}, we have a natural $\HG$-action $\rho$ on $CF(\bL,\bL)$ as follows: for $v_g \in CF(\bL_1,\bL_g)$ and $\chi\in \HG$,
\[ \rho(\chi)(v_g):=\chi(g^{-1})v_g.\]
We recall the notion of semi-direct products.
\begin{defn}[\cite{Seidel-g2}]\label{def:Cho-LeeAinf}
Given an $\AI$-algebra $\mathcal{A}$ and $H$-action $\rho$ on $\mathcal{A}$,  the {\em semi-direct product $\AI$-algebra} structure on $ \mathcal{A}\otimes \Bbbk[H]$ is defined as
% follows. As a module
%\[ \mathcal{A}\rtimes \HG= \mathcal{A}\otimes k[\HG]\] and
\begin{align}
\begin{split}
 &m_k(w_1 \otimes \chi_1, \cdots, w_k \otimes \chi_k)\label{eq:interioraction}\\
 :=&m_k\big(\rho(\chi_2 \cdots \chi_{k})(w_1), \rho(\chi_3 \cdots \chi_{k})(w_2), \cdots, \rho(\chi_k)(w_{k-1}), w_k\big) \otimes \chi_1 \cdots \chi_k.
 \end{split}
\end{align}
\end{defn}

\begin{remark}
Observe that for $v_{g_1}\in CF(\bL_1,\bL_{g_1}),\cdots,v_{g_k}\in CF(\bL_1,\bL_{g_k})$, we can rewrite $m_k$ on $CF(\bL,\bL)\otimes \Bbbk[\HG]$ by
\begin{align*}
\begin{split}
 &m_k(v_{g_1} \otimes \chi_1, \cdots, v_{g_k} \otimes \chi_k)\\
=&m_k\big(v_{g_1},\chi_2(g_1^{-1})v_{g_2},\cdots,\chi_k(g_1^{-1}\cdots g_{k-1}^{-1})v_{g_k} \big) \otimes \chi_1 \cdots \chi_k.
 \end{split}
\end{align*}
This formulation is useful for the interpretation of semi-direct product $\AI$-structure in terms of ``path-decorated" discs which will appear later.
\end{remark}
%\[ \eqref{eq:interioraction}= m_k\big(w_1,\rho(\chi_1\chi_2)^{-1}(w_2),\cdots,\rho(\chi_1\cdots\chi_k)^{-1}(w_k)\big)\otimes \chi_1\cdots\chi_k\]
%by equivariance of $m_k$ with respect to $\HG$-action $\rho$.

%Let
%\[ \WT{\bL}:= \bigoplus_{g\in G}\bL_g.\]
%On $X$, we have an $\AI$-algebra $ CF(\WT{\bL},\WT{\bL})$.

For notational simplicity, we write by
\begin{equation}\label{eqn:hatGcoeffi}
 CF(\bL,\bL;\hat{G}):= CF(\bL,\bL) \otimes \Bbbk[\HG] 
\end{equation}
the corresponding module with the $A_\infty$-structure in Definition \ref{def:Cho-LeeAinf} from now on.

\begin{lemma}[\cite{Seidel-g2,CLe}]\label{lem:downtoup}
For $\WT{\bL}:= \bigoplus_{g\in G}\bL_g \subset X$, we have an $\AI$-isomorphism
\[ \Phi:  CF(\bL,\bL;\hat{G}) \to CF(\WT{\bL},\WT{\bL})\]
where $\Phi_1(v \otimes \chi) = \sum_{g\in G} \chi(g^{-1})(g\cdot v)$ and $\Phi_{\geq 2}=0$.
\end{lemma}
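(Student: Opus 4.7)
The plan is to split the claim into two parts: (i) $\Phi_1$ is a linear bijection, and (ii) $\Phi_1$ strictly intertwines the $A_\infty$-operations, which is exactly the $\AI$-morphism condition when $\Phi_{\geq 2}=0$. The first part is Fourier inversion on the finite abelian group $G$; the second part combines $G$-equivariance of the upstairs moduli with a short character identity.

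For (i), I would work blockwise. Fix $h\in G$; since $g\cdot \bL_1 = \bL_g$, multiplication by $g$ gives an isomorphism $CF(\bL_1,\bL_h) \to CF(\bL_g,\bL_{gh})$, and these blocks exhaust the part of $CF(\WT{\bL},\WT{\bL})$ of ``intersection type'' $h$. On this block, $\Phi_1$ reads $v\otimes \chi \mapsto \bigl(\chi(g^{-1})(g\cdot v)\bigr)_{g\in G}$, and its inverse $(w_g)_g \mapsto \sum_\chi v_\chi \otimes \chi$ with $v_\chi = |G|^{-1}\sum_g \chi(g)(g^{-1}\cdot w_g)$ is built directly from the orthogonality $|G|^{-1}\sum_\chi \chi(g)\chi(g')^{-1} = \delta_{g,g'}$. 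This delivers bijectivity.

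For (ii), the geometric input is that the almost complex structures and Hamiltonian/perturbation data upstairs are chosen $G$-equivariantly, so that the moduli of pseudo-holomorphic polygons with boundary on $\WT{\bL}$ are $G$-torsors over the downstairs moduli with boundary on $\bL$. In particular, after the identification $CF(\bL,\bL)=\bigoplus_g CF(\bL_1,\bL_g)$, the downstairs operation reads
\[
m_k^{\rm down}(v_{h_1},\ldots,v_{h_k}) \;=\; m_k^{\rm up}\bigl(v_{h_1},\, h_1\cdot v_{h_2},\,\ldots,\, h_1\cdots h_{k-1}\cdot v_{h_k}\bigr).
\]
Expanding $m_k(\Phi_1(v_{h_1}\otimes\chi_1),\ldots,\Phi_1(v_{h_k}\otimes\chi_k))$ and enforcing composability $g_{i+1}=g_i h_i$ with $g_1=g$, each contribution acquires the prefactor $\prod_i \chi_i(g_i^{-1})$ and, by $G$-equivariance of $m_k^{\rm up}$, reduces to $g\cdot m_k^{\rm down}(v_{h_1},\ldots,v_{h_k})$.

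The final step is the character factorisation
\[
\prod_{i=1}^k \chi_i(g_i^{-1}) \;=\; (\chi_1\cdots\chi_k)(g^{-1}) \cdot \prod_{i=2}^k \chi_i\bigl((h_1\cdots h_{i-1})^{-1}\bigr),
\]
whose first factor is precisely what $\Phi_1$ produces when applied to $m_k^{\rm down}(v_{h_1},\ldots,v_{h_k})\otimes \chi_1\cdots \chi_k$, and whose second factor is exactly the character twist in Definition~\ref{def:Cho-LeeAinf}. Summing over $g\in G$ then yields $\Phi_1\circ m_k^{\rm semi} = m_k^{\rm up}\circ \Phi_1^{\otimes k}$. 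I expect the main obstacle to be bookkeeping rather than conceptual: one must carefully match the labelling of boundary branches of upstairs disks with the group-element decoration on downstairs Floer generators, and ensure that the $G$-equivariant choice of moduli data (almost complex structures, perturbations, orientations) is compatible uniformly across all $k$ at once. Once this is in place, the character identity together with Fourier inversion delivers the lemma.
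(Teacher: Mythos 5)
The paper does not prove Lemma~\ref{lem:downtoup} itself but cites \cite{Seidel-g2,CLe}; the closest proof in the paper is that of Proposition~\ref{prop:lagcfupdown}, which establishes the reverse direction $CF(\WT{\bL},\WT{\bL})\to(CF_{\hat G}(\bL,\bL))^{\hat G}$ via $\Psi(Z_g)=\tfrac{1}{|\hat G|}\sum_\chi\chi(g)\,Z\otimes\chi$ and the same character-orthogonality and curve-lifting arguments. Your proposal is correct and takes essentially that same route---$G$-equivariant Floer data makes the upstairs polygon moduli into $G$-torsors over the downstairs one, the composability constraint $g_{i+1}=g_ih_i$ forces a single free parameter $g$, and the character factorisation reproduces exactly the twist in Definition~\ref{def:Cho-LeeAinf}---so there is no gap; the only presentational difference from Proposition~\ref{prop:lagcfupdown} is that you index the blocks by intersection type rather than by the $\Theta$-based sheet labelling, which is immaterial here since the target is the semidirect product $CF(\bL,\bL;\hat G)$ and not a $\Theta$-twisted $CF_{\hat G}(\bL,\bL)$.
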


Next, we consider the deformation of $\bL$ by weak bounding cochains. Recall that $b \in CF^1(\bL,\bL)$ is called a (weak) bounding cochain if it satisfies
\[ m_0+m_1(b)+m_2(b,b)+\cdots = W(b)\cdot e_\bL.\]
for $W(b)\in \Bbbk$ where $e_\bL$ is the unit (which can be specified in the Morse model we will use in the later applications). $W$ is called the (Floer) potential. We denote the set of all bounding cochains by $MC_{weak}(\bL)$. 
Throughout, we work with an assumption that there exist elements $X_1,\cdots,X_n \in CF^1(\bL,\bL)$ such that any $\Bbbk$-linear combination $b$ of $X_1,\cdots,X_n$ satisfies weak Maurer-Cartan equation. Hence $MC_{weak}(\bL)$ contains their linear span isomorphic to $\Bbbk^n$. 
Define
\begin{equation}\label{eqn:cflblb}
 CF ((\bL,b),(\bL,b)):= CF(\bL,\bL)\otimes_\Bbbk R,
\end{equation}
i.e., we extend the coordinate to obtain an $\AI$-algebra over $R=\Bbbk [x_1,\cdots, x_n]$. This comes with some technical subtlety which will be addressed shortly. One can check that $b=x_1X_1+\cdots+x_nX_n$ is a weak bounding cochain on $ CF ((\bL,b),(\bL,b))$ where $A_\infty$-operations $m_k^{b,\cdots,b}$ on $ CF ((\bL,b),(\bL,b))$ are $b$-deformed $m_k$ on $CF (\bL,\bL)$ with coefficients of $b$ taken from $R$. 
%We will use the notation $CF ((\bL,b),(\bL,b))$ mostly to denote $CF_R(\bL,\bL)$ in what follows. 

  In full generality, $R$ should be the quotient of $\Lambda \{ x_1,\cdots, x_n\}$ by $f_j$'s appearing as coefficients of nonunit generators in $CF(\bL,\bL)$ in the (multi-)linear expansion of 
$$m_0+m_1(b)+m_2(b,b)+\cdots = \tilde{W}(b) \cdot e_\bL + \sum_j f_j (x_1,\cdots, x_n) Y_j$$
using the rule $m_k (x_{i_1} X_{i_1},\cdots, x_{i_k} X_{i_k} ) =x_{i_k} \cdots x_{i_1}$. Here $\Lambda \{ x_1,\cdots, x_n\}$ consists of noncommutative power series in $x_i$'s with valuations of coefficients bounded below. Notice that variables a priori do not commute. However, for our purpose, it suffices to work with a polynomial ring over $\mathbb{C}$ since (i) there is no convergence issues due to some strong finiteness conditions (see Lemma \ref{lem:seidelLagoverc}) and (ii) $f_j$'s are given as commutators. Therefore any $\mathbb{C}$-linear combination of $X_i$'s serves as a weak bounding cochian in our main application.

\begin{remark}\label{rmk:lblbhoch}
There is an obvious structural similarity between $CF_R(\bL,\bL)$ and (a certain component of) the Hochschild cochain complex $CH^* (CF(\bL,\bL ),CF(\bL,\bL ))$. In fact, if one identifies an element $x_{i_k} \cdots x_{i_1} Y \in CF_R(\bL,\bL)$ with a map $CF(\bL,\bL)^{\otimes k} \to CF(\bL,\bL)$ that sends the generator $X_{i_1} \otimes \cdots \otimes X_{i_k}$ to $Y$ and all the others to zero, then differentials and products on both sides match (up to a coherent change of signs). The only difference is that $x_i$'s do not commute on Hochschild side. 
\end{remark}

Let us now consider the $\HG$-equivariant twist of the above construction.

\begin{lemma}
Let $b=x_1X_1+\cdots+x_nX_n$ be a weak bounding cochain on $CF ((\bL,b),(\bL,b))$. 
Then $b\otimes 1$ is a weak bounding cochain on $CF ((\bL,b),(\bL,b)) \otimes \Bbbk[\HG]$.
\end{lemma}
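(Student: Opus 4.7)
The statement should follow essentially by direct computation unfolding the semi-direct product $A_\infty$-structure on $CF((\bL,b),(\bL,b)) \otimes \Bbbk[\HG]$ defined in Definition \ref{def:Cho-LeeAinf}, applied at the trivial character. The plan is to insert $b \otimes 1$ into each slot of $m_k$ and observe that the twisting by $\rho$ disappears, so the Maurer--Cartan equation on $CF((\bL,b),(\bL,b)) \otimes \Bbbk[\HG]$ reduces to the one already satisfied by $b$ on $CF((\bL,b),(\bL,b))$, producing the same Floer potential.

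More concretely, I would first recall that by the semi-direct product formula
\[
 m_k(b \otimes 1, \dots, b \otimes 1) \;=\; m_k\bigl(\rho(1\cdots 1)(b),\, \rho(1\cdots 1)(b),\, \dots,\, \rho(1)(b),\, b\bigr) \otimes (1 \cdots 1),
\]
and since every character occurring in the product $\chi_2 \cdots \chi_k$ equals the trivial character $1 \in \HG$, the action $\rho(1)$ is the identity on each input. Consequently
\[
 m_k(b \otimes 1, \dots, b \otimes 1) \;=\; m_k(b, \dots, b) \otimes 1
\]
for every $k \geq 0$ (the $k=0$ case being $m_0 = m_0 \otimes 1$, where we view $m_0$ as an element of $CF(\bL,\bL)$).

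Summing over $k$ and using that $b$ is a weak bounding cochain for $CF((\bL,b),(\bL,b))$ with potential $W(b)$,
\[
 \sum_{k \geq 0} m_k(b \otimes 1, \dots, b \otimes 1) \;=\; \Bigl(\sum_{k \geq 0} m_k(b, \dots, b)\Bigr) \otimes 1 \;=\; W(b)\, e_\bL \otimes 1.
\]
Since $e_\bL \otimes 1$ is the strict unit of the semi-direct product $A_\infty$-algebra $CF((\bL,b),(\bL,b)) \otimes \Bbbk[\HG]$, this exhibits $b \otimes 1$ as a weak bounding cochain with the same potential $W(b)$. There is no real obstacle here: the only thing to verify is that tensoring with $1 \in \Bbbk[\HG]$ kills the character-dependent twist in Definition \ref{def:Cho-LeeAinf}, which is immediate, and that the $R$-linear extension of the operations commutes with the $\Bbbk[\HG]$-extension, which is built into the construction of \eqref{eqn:cflblb} and \eqref{eqn:hatGcoeffi}.
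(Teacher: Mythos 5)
Your proof is correct and is the natural argument: the paper states this lemma without proof, and the only content is precisely what you observe, namely that in the trivial-character sector the twist from Definition~\ref{def:Cho-LeeAinf} is by $\rho(1)=\mathrm{id}$, so $m_k(b\otimes 1,\dots,b\otimes 1)=m_k(b,\dots,b)\otimes 1$ for every $k$, and the weak Maurer--Cartan equation for $b\otimes 1$ collapses to the one already satisfied by $b$, with output $W(b)\,e_\bL\otimes 1$ where $e_\bL\otimes 1$ is the strict unit of the semi-direct product (since $\rho$ fixes $e_\bL$ and $1$ is the identity of $\Bbbk[\HG]$).
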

Similarly to \eqref{eqn:hatGcoeffi}, we denote 
$$CF ((\bL,b),(\bL,b);\hat{G}):=CF ((\bL,b),(\bL,b)) \otimes \Bbbk[\HG]$$
in what follows.
It is helpful to spell out $m_k^{b\otimes 1,\ldots,b\otimes1}$ operations explicitly. 
\begin{align}
& m_k^{b\otimes 1,\ldots,b\otimes1}(a_1\otimes \chi_1,\cdots,a_k\otimes \chi_k)\label{twistedainfty}\\
=&m_k^{\rho(\chi_1\cdots\chi_k)b,\rho(\chi_2\cdots\chi_k)b,\cdots,\rho(\chi_k)b,b}\big(\rho(\chi_2\cdots\chi_k)a_1,\rho(\chi_3\cdots\chi_k)a_2,\cdots,a_k\big)\otimes \chi_1\cdots\chi_k,\nonumber
\end{align}
where $\rho(\chi)b=\sum_i x_i \cdot \rho(\chi)(X_i).$ 
%By definition of boundary deformation 

Recall that we employed $\HG$-action $\rho$ on $CF(\bL,\bL)$ to define $CF ((\bL,b),(\bL,b);\hat{G})$. Now, on $CF ((\bL,b),(\bL,b);\hat{G})$, we consider an extended $\HG$-action as follows:
\begin{equation}\label{eq:extendedaction} 
\chi\cdot(r(x) v\otimes \eta):= r(\chi\cdot x)\rho(\chi)v\otimes \eta.
\end{equation}

Let $S=\Bbbk[x_1,\cdots,x_k,x_1',\cdots,x_k']$, $b'=x_1' X_1+\cdots + x_n' X_n$, and consider $CF((\bL,b),(\bL,b'))$ which is defined analogously to \eqref{eqn:cflblb}. As a module
\begin{equation}\label{eq:diagonalMF}
CF((\bL,b),(\bL,b'))=CF(\bL,\bL)\otimes_{\Bbbk} S
\end{equation} 
and it has a curved differential $m_1^{b,b'}$ deformed by $b$ and $b'$ whose coefficients are taken from $S$. 
It is a (curved) $\AI$-bimodule, but we may regard it as a matrix factorization of $W \boxminus W:=W(x_1',\cdots,x_n')-W(x_1,\cdots,x_n)$. 

\begin{prop}\label{prop:floerkos}
We have the followings.
\begin{itemize}
\item $m_k^{b\otimes 1,\ldots,b\otimes1}$ is equivariant with respect to the $\HG$-action in \eqref{eq:extendedaction} so $m_k^{b\otimes 1,\ldots,b\otimes1}$ defines a well-defined $\AI$-structure on $(CF ((\bL,b),(\bL,b);\hat{G}))^{\HG}$.

\item If $CF((\bL,b),(\bL,b'))$ in \eqref{eq:diagonalMF} is isomorphic to $\Delta_W^1$, then $(CF ((\bL,b),(\bL,b);\hat{G}))^{\HG}$ is $\AI$-isomorphic to $hom_{MF_{\HG\times \HG}(W\boxminus W)}(\Delta_W^{\HG\times \HG},\Delta_W^{\HG\times \HG})$, so we also have
\[ H^*((CF ((\bL,b),(\bL,b);\hat{G}))^{\HG}) \cong Kos(W,\HG).\]
\end{itemize}

\end{prop}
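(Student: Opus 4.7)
The plan is to verify both claims by unwinding the explicit formula \eqref{twistedainfty} and matching it with the equivariant matrix factorization structure via Lemma \ref{lem:downtoup} and Lemma \ref{lem:equivmfHH}.

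For the first bullet, the key input is that the original $\HG$-action $\rho$ on $CF(\bL,\bL)$ acts by $A_\infty$-algebra automorphisms. This is essentially built into Lemma \ref{lem:downtoup}: under the $A_\infty$-isomorphism $\Phi: CF(\bL,\bL;\hat{G}) \to CF(\WT{\bL},\WT{\bL})$, the $\HG$-action on the left corresponds to the dual action pulled back from the geometric $G$-action on $X$, which preserves the Floer $A_\infty$-structure upstairs. Granted this, one unwinds \eqref{twistedainfty} and checks that transforming the inputs by the extended action \eqref{eq:extendedaction} and transforming the output give the same result: the coefficient substitution $x_i \mapsto \chi \cdot x_i$ in $R$ corresponds on the generator side to $\rho(\chi)(X_i)$, so that each of the deformed bounding cochains $\rho(\chi_{j+1}\cdots\chi_k)b$ in \eqref{twistedainfty} is precisely the image of $b$ under the extended action, while the character factor $\chi_1\cdots\chi_k$ is untouched. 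Equivariance of $m_k$ then absorbs the $\rho$-twists and shows that the whole operation commutes with the $\HG$-action. Hence $m_k^{b\otimes 1,\ldots,b\otimes 1}$ restricts to a well-defined $A_\infty$-structure on the invariants.

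For the second bullet, I would proceed in three steps. First, upgrade Lemma \ref{lem:downtoup} to the $b$-deformed setting: the map $\Phi$ extends to an $A_\infty$-isomorphism $CF((\bL,b),(\bL,b);\hat G) \cong CF((\WT{\bL},\tilde b),(\WT{\bL},\tilde b))$, decomposing the left-hand side according to characters $\chi\in\HG$ and the right-hand side according to pairs $(\bL_1,\bL_g)$ with weights $\chi(g^{-1})$. Second, match these summands with twisted matrix factorizations: under the assumption $CF((\bL,b),(\bL,b'))\cong \Delta_W^1$, the $\chi$-weighted summand $CF((\bL_1,b),(\bL_g,b'))$ is identified with $\Delta_W^\chi$, because the generators of $CF(\bL_1,\bL_g)$ acquire the extra factor $\chi(g^{-1})$ under $\rho(\chi)$, which precisely deforms the defining differential $(x_i'-x_i)\theta_i$ of $\Delta_W^1$ into $(x_i'-\chi\cdot x_i)\theta_i$ and similarly replaces $\nabla_i W$ by $\nabla_i W|_{\chi\cdot x\to x}$. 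Third, assemble these identifications over all $\chi\in\HG$ and take $\HG$-invariants to invoke Lemma \ref{lem:equivmfHH}, producing the claimed $A_\infty$-isomorphism with $hom_{MF_{\HG\times \HG}(W\boxminus W)}(\Delta_W^{\HG\times \HG},\Delta_W^{\HG\times \HG})$. The final cohomological identification with $Kos(W,\HG)$ then follows immediately from Proposition \ref{prop:mfkos}.

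The main obstacle I expect is in the second step of part (2): one must check that the twist by $\rho(\chi)$ on the Floer side corresponds precisely to the substitution $x_i\mapsto \chi\cdot x_i$ appearing in the differential of $\Delta_W^\chi$, and that this matching is compatible not only as $\Z/2$-graded modules but as matrix factorizations together with their $\HG\times\HG$-grading. The subtlety is that $\chi\in\HG$ plays two independent roles here, one as the parameter of the extended action \eqref{eq:extendedaction}, and the other as the summand label in the direct-sum decomposition, and one must consistently distinguish them. This bookkeeping is parallel to the proof of Lemma \ref{lem:equivmfHH}, so the proposal is essentially to reduce the second bullet to that lemma combined with the $b$-deformed analogue of Lemma \ref{lem:downtoup}.
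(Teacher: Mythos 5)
The paper does not actually write out a proof: the sentence immediately following the statement reads ``The above results are shown in \cite{CLe} when $W$ has an isolated singularity. Their proofs only involve algebraic formalism, and the same proof works for nonisolated cases as well.'' So your write-up is more detailed than what appears in the paper, and should be compared against what one would write out following \cite{CLe}.

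For the first bullet your argument is essentially right, but note one imprecision. You say the deformed bounding cochains $\rho(\chi_{j+1}\cdots\chi_k)b$ appearing in \eqref{twistedainfty} ``are precisely the image of $b$ under the extended action.'' That is not the case: under the extended action \eqref{eq:extendedaction}, $b$ is \emph{invariant}, because $\chi\cdot x_i = \chi(g_{X_i})x_i$ and $\rho(\chi)X_i = \chi(g_{X_i})^{-1}X_i$ cancel term by term in $b=\sum_i x_iX_i$. The cochain $\rho(\chi)b=\sum_i x_i\rho(\chi)X_i$ is a genuinely different element, obtained by twisting only on the generator side. The two facts that actually drive the proof are (i) $\rho(\chi)$ is an $A_\infty$-automorphism of $CF(\bL,\bL)$ (hence its $R$-linear extension combined with $x\mapsto\chi\cdot x$ is an $A_\infty$-automorphism of $CF(\bL,\bL)\otimes R$), and (ii) $b$ is invariant under this combined automorphism, so pulling the extended action past the undeformed $m_l$ leaves all $b$-insertions, and the character label, unchanged. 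With these two observations in place, the computation you sketch goes through.

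For the second bullet you take a genuinely different route than \cite{CLe}: you pass through the geometric upstairs complex $CF(\WT{\bL},\WT{\bL})$ via a $b$-deformed extension of Lemma~\ref{lem:downtoup}. The \cite{CLe} argument stays downstairs and is purely algebraic, working directly with the character decomposition $CF(\bL,\bL;\hat{G})=\bigoplus_\chi CF(\bL,\bL)\otimes\chi$. This is relevant because the Seidel-style proof of Lemma~\ref{lem:downtoup} uses, and your detour therefore implicitly relies on, disk enumeration upstairs, whereas the point of the paper's statement is that once the hypothesis $CF((\bL,b),(\bL,b'))\cong\Delta_W^1$ holds, everything downstream is algebra. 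There is also a conflation in your middle step: you identify the $\chi$-summand ``with $\Delta_W^\chi$,'' but the object you need to land on is $hom_{MF(W\boxminus W)}(\Delta_W^1,\Delta_W^\chi)$, not the matrix factorization $\Delta_W^\chi$ itself. These are modules of different rank over $S$ ($2^n$ versus $4^n$), and only after the Koszul reduction of \eqref{homcomplex} in the proof of Proposition~\ref{prop:mfkos} does $hom_{MF}(\Delta_W^1,\Delta_W^\chi)$ become quasi-isomorphic to a rank-$2^n$ complex over $R$. Matching the $\chi$-summands with these reduced $hom$ complexes, and then taking $\hat{G}$-invariants to invoke Lemma~\ref{lem:equivmfHH}, is the right mechanism; your last step identifies it correctly. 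If you carry out the matching at the level of the morphism complexes (rather than the diagonal MFs themselves) the bookkeeping you flag as the main obstacle --- the two distinct roles of $\chi$ --- is exactly the content of Lemma~\ref{lem:equivmfHH}.
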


The above results are shown in \cite{CLe} when $W$ has an isolated singularity. Their proofs only involve algebraic formalism, and the same proof works for nonisolated cases as well.

\subsection{The Seidel Lagrangian in the pair-of-pants $P$}\label{subsec:popFloer}
%Examples of our main interest are punctured Riemann surfaces whose abelian quotient is isomorphic to the pair-of-pants $P$. 
Let us look at the example of the pair-of-pants $P$ again.
We apply the construction in \ref{subsec:BmLF} to a certain immersed Lagrangian $\bL$. We take $\bL$ to be the immersed circle with three transversal self-intersection points $X,Y,Z$ depicted in Figure \ref{fig:seilag}, which carries a nontrivial spin structure. The nontrivial spin structure (marked as {\color{red}$\circ$} in Figure \ref{fig:seilag}) can be represented by fixing a generic point on $\bL$ so that the sign of a pseudo-holomorphic curve changes each time its boundary passes through the point.
This immersed circle was first introduced in \cite{Seidel-g2}, and is often called the \emph{Seidel Lagrangian} for this reason. Sheridan \cite{Sheridan11} used $\bL$ and its higher dimensional analogues to study homological mirror symmetry of pair-of-pants.

\begin{figure}[h]
\includegraphics[scale=0.5]{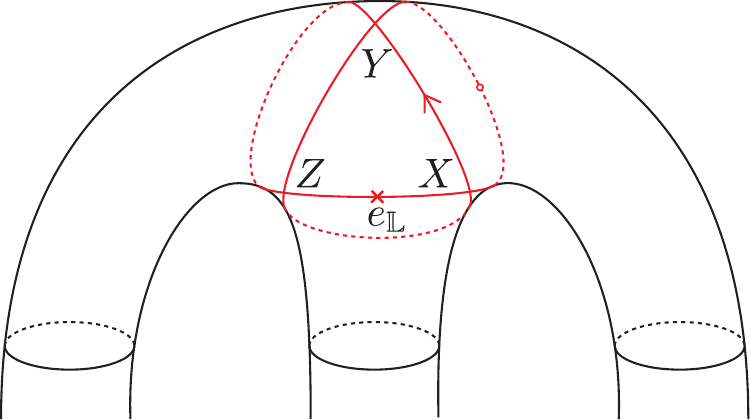}
\caption{The Seidel Lagrangian $\bL$}
\label{fig:seilag}
\end{figure}

The Maurer-Cartan formalism on $CF(\bL,\bL)$ gives rise to a LG model as follows.
Recall that $CF(\bL,\bL)$ takes two generators from the cohomology of $S^1$ which is the domain of the immersion, and two generators from each self-intersection points of complementary degrees.
Denote by $e_\bL$ and $f_\bL$ the unit and the point class in $CF(\bL,\bL)$ from $H^* (S^1)$, and write $X,\bar{X}$, $Y,\bar{Y}$, $Z,\bar{Z}$ for immersed generators supported at the self-intersection points $X$, $Y$, $Z$ respectively with degrees
$|X|=|Y|=|Z| \equiv 1,  |\bar{X}|=|\bar{Y}|=|\bar{Z}| \equiv 0 \mod 2.$
In practice, we prefer a fractional grading induced by the volume form $\Omega$ in parallel to \ref{subsec:SHP}, which results in
$$ |X|=|Y|=|Z|= \frac{1}{3}, \quad |\bar{X}|=|\bar{Y}|=|\bar{Z}|= \frac{2}{3}, \quad |e_\bL| = 0, \quad |f_\bL|=1$$
as already computed in \cite[Section 10]{Seidel-g2}. We take $1/3$ of the `index' therein as our fractional degree in order to keep the degree of $m_k$ to be $2-k$ as usual. (Our $m_k$ corresponds to the $0$-th order term $m_0^k$ in \cite{Seidel-g2}).
 
One can show that $b=xX+yY+zZ$ solves the weak Maurer-Cartan equation
$$ m_1(b) + m_2(b,b) + \cdots =W(b) \cdot e_\bL$$
with $W(b)=W(x,y,z)=xyz$. This is due to the symmetry of $\bL$ with respect to the reflection that swaps the front and back of $P$ (when drawn as in Figure \ref{fig:seilag}). As a result, one has $m_2(X,Y) = - m_2(Y,X)$ and similar for other pairs of degree-1 generators. See \cite[Section 10]{Seidel-g2} or \cite[Theorem 7.5]{CHL} for details.

Technically $x,y,z$ should be taken from the positive valuation part of the coefficient ring to control the convergence. For our purpose, $x,y,z$ can be taken from $\mathbb{C}$ as we will explain in Lemma \ref{lem:seidelLagoverc}. We refer readers to \cite{Seidel-g2} or \cite{CHL} for details on the $A$-infinity structure on $CF(\bL,\bL)$. Indeed, computations here are much simpler than ones in the references which consider the (orbifold-)compactification of $P$.
 
We justify the usage of coefficients $x,y,z$ over $\mathbb{C}$ in the following lemma.

\begin{lemma}\label{lem:seidelLagoverc}
For the Seidel Lagrangian $\bL$ in the pair-of-pants $P$, the Floer complex $CF((\bL,b),(\bL,b))$ (with its $A_\infty$-operations)
can be defined over $\mathbb{C}$. In particular, $b= xX + yY + zZ$ can be taken over $\mathbb{C}$-coefficients accordingly (i.e., $x,y,z$ can be arbitrary complex numbers).
\end{lemma}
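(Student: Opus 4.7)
The plan is to show that the $A_\infty$-operations on $CF(\bL,\bL)$ involve only finitely many pseudo-holomorphic polygons with boundary on $\bL$ for each sequence of immersed input generators, so that the formal sums defining $m_k^{b,\ldots,b}$ truncate to polynomials in $x,y,z$. Once this finiteness is established there is no need to track valuations via a Novikov ring, and $x,y,z$ may be specialized to arbitrary complex numbers. In other words, I want to upgrade the $\AI$-algebra from being defined over $\Lambda\{x,y,z\}$ to being defined over $\mathbb{C}[x,y,z]$, and it suffices to prove that for each $k$-tuple of inputs from $\{e_\bL,f_\bL,X,\bar X,Y,\bar Y,Z,\bar Z\}$, at most finitely many disks contribute to $\mu_k$, and that these contributions stabilize for $k$ large.

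First I would use exactness. The manifold $(P,d\lambda)$ is exact, and with the symmetric placement of $\bL$ in Figure~\ref{fig:seilag} one can choose a primitive of $\lambda|_\bL$ that is single-valued on the domain circle, so $\bL$ is an exact immersed Lagrangian. Hence for any pseudo-holomorphic polygon $u:(D,\partial D)\to(P,\bL)$ the symplectic area of $u$ equals a signed sum of the primitive values at the corners, and is determined by the corner data alone. The Liouville structure also yields a maximum principle on the cylindrical ends, so every contributing polygon is contained in a fixed compact subdomain $P^{in}\subset P$, ruling out escaping solutions.

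The key step is the classification of holomorphic polygons bounded by $\bL$. Each such polygon projects topologically to a union of components of $P\setminus\bL$, and by the removable singularity theorem its image cannot contain a puncture; moreover its boundary cannot be homotopic on $\bL$ to a loop encircling a puncture, since otherwise the disk would have to cover the puncture. A direct inspection of $P\setminus\bL$ shows that, apart from the three small triangles $T_X,T_Y,T_Z$ at the self-intersections and a finite collection of small bigons and strips accounting for $\mu_1$ together with the Morse data at $e_\bL,f_\bL$, every other bounded region of $P\setminus\bL$ encloses at least one puncture. The allowed regions form a finite list, and by the Riemann mapping theorem each carries a unique holomorphic parametrization. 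In particular, the three triangles produce the cyclically symmetric contribution $\mu_3(Z,Y,X)=\pm e_\bL$ and its permutations, giving $W(b)=xyz$, while $\mu_k=0$ identically on immersed inputs for all $k\geq 4$.

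From this finiteness, the bounding cochain equation $\sum_k m_k(b,\ldots,b)=W(b)\,e_\bL$ reduces to a finite sum for $b=xX+yY+zZ$, so $W=xyz\in\mathbb{C}[x,y,z]$ is an honest polynomial and, more generally, $m_k^{b,\ldots,b}(a_1,\ldots,a_k)$ is a polynomial expression in $x,y,z$ for each tuple of generators $a_i$. Hence $CF((\bL,b),(\bL,b))$ is an $\AI$-algebra (in fact a dga after passing to cohomology) over $R=\mathbb{C}[x,y,z]$. The main obstacle is the exclusion of wrapping disks near the punctures, which will require the most care: it combines the removable singularity theorem with the homotopical observation that the boundary of a holomorphic disk cannot encircle a puncture while remaining on $\bL$, together with a careful finite enumeration of the bounded complementary regions of $\bL$ in $P$ to verify that the list above is exhaustive.
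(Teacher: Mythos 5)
Your approach is genuinely different from the paper's, and while it points at correct geometric intuitions, it has a gap that the paper's argument was specifically designed to avoid. The paper does not attempt to classify holomorphic polygons at all. Instead it notes that in the fractional grading one has $|b|=|xX|=|yY|=|zZ|=1$ as elements of $CF^1$, so inserting one more $b$ in $m_{k'}(b,\ldots,b,Z_1,\ldots,Z_k,b,\ldots)$ raises the total (unshifted) input degree by $1/3 = |X|$ while lowering the degree of the operation $m_{k'}$ by $1$ (since $\deg m_{k'} = 2-k'$), so the output degree strictly decreases with each extra $b$-insertion; since there are only finitely many output degrees available, the sum defining $m_k^{b,\ldots,b}$ for $k=1,2$ truncates to a finite sum, and hence to a polynomial over $\mathbb{C}$.

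Your proposal instead rests on the claim that "$\mu_k=0$ identically on immersed inputs for all $k\geq 4$," proved by enumerating bounded complementary regions of $\bL$ in $P$. This is not an adequate argument as written, for two reasons. First, in the Morse/pearl model actually used by the paper (see the paper's own observation that certain contributions to $m_1^{b,b}(\bar Z)$ come from "pearl trajectories with $4$ marked points involving two constant pearl components"), a rigid contribution to $\mu_{k'}$ need not be a single holomorphic polygon: it can be a tree of polygons and constant disks at the self-intersection points joined by Morse flow lines on $\bL\cong S^1$. Ruling these out for $k'\geq 4$ is not a matter of listing bounded regions of $P\setminus\bL$; one would have to do a dimension/rigidity analysis for pearl trees. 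Second, even granting the classification of single polygons, your argument only directly constrains single-disk contributions; you would still need to show that the full pearl trees stabilize and, more importantly, that the sum over $k'$ in $m_k^{b,\ldots,b}=\sum_{k'\geq k}m_{k'}(b,\ldots,b,Z_1,\ldots,b)$ is finite. The paper's fractional-degree count gets this for free, with no geometry, and is therefore both shorter and more robust. Your route could in principle be completed, but it requires a careful rigidity analysis of pearl configurations that you have not supplied.
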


\begin{proof}
The only issue is the convergence for $m_1^{b,b}$ and $m_2^{b,b,b}$, as they are given in the form of
\begin{equation}\label{eqn:bdeformedmk}
m_k^{b,\cdots,b} (Z_1,\cdots,Z_k) = \sum m_{k' \geq k} (b,\cdots,b,Z_1,b,\cdots,b, Z_k, b, \cdots, b)
\end{equation}
where we allow to insert arbitrarily many $b$'s. However, in our particular situation, \eqref{eqn:bdeformedmk} is always a finite sum. To see this, one can use aforementioned fractional grading.
When $k=1,2$, \eqref{eqn:bdeformedmk} must be a finite sum, since inserting $b$ each time increases the total degree of input by $1/3$ whereas the degree of the operation drops by $1$ for the additional number of  inputs. 
%
%QUESTION: If we're to consider the full $A_\infty$-operation on $CF(\bL,\bL)$, then the argument above does not cover some cases, say putting lots of $p$'s. Can we use ``divisor axiom" to handle this?
\end{proof}

Formally, one can think of $x,y,z$ as the dual of $X,Y,Z$ where the latter is viewed as the element of the bar construction $B (CF(\bL,\bL)) =  \oplus_k CF(\bL,\bL)^{\otimes k}[1]$. Thus their gradings are naturally given as
$$|x| = - (|X|-1) = \frac{2}{3}, \quad |y| = \frac{2}{3}, \quad |z| = \frac{2}{3}$$ 
from the fractional grading on $CF(\bL,\bL)$. In particular, $W$ has degree $2$.

%\begin{remark}\label{rem:ainftydga}
%To fit into dga sign convention, one needs to modify $m_1^{b,b}$ and $m_2^{b,b,b}$ as
%$$d(Z)= m_1^{b,b}(Z),\quad  Z_1 \cdot Z_2 = (-1)^{|Z_1|} m_2^{b,b,b} (Z_1,Z_2).$$
%\end{remark}

We now describe the ring structure of the cohomology of the extended Floer complex $CF((\bL,b),(\bL,b))$ defined in \eqref{eqn:cflblb}. First, the differential acts on generators as 
$$m_1^{b,b} (X) =  yz e_\bL, \quad m_1^{b,b} (Y) =  zx e_\bL, \quad m_1^{b,b} (Z) =  xy e_\bL,$$
$$m_1^{b,b} (e_\bL)=0, \quad m_1^{b,b} (f_\bL) = xy\bar{Z} + yz \bar{X} + zx \bar{Y}.$$
The most of computations of $m_1^{b,b}$ are simply a reinterpretation of \cite[Section 10]{Seidel-g2} for our boundary-deformed $A_\infty$-operations. One also finds from the same reference that
$$ m_2^{b,b,b} (X,Y) = \bar{Z} + ze_\bL, \quad m_2^{b,b,b} (Y,X) = -\bar{Z}.$$
Taking $m_1^{b,b}$ of the first equation leads to
$$ m_1^{b,b} (\bar{Z}) = m_2^{b,b,b} (yz e_\bL, Y) + m_2^{b,b,b} ( X, zx e_\bL) = z( x X- yY)$$
by the Leibniz rule. 
To see this more geometrically, there are actually nontrivial pearl trajectories with $4$ marked points involving two constant pearl components, which are incident to $Z, \bar{Z}$ and $X, \bar{X}$ respectively, and similar for the other.
Using the symmetric arguments, we obtain
$$ m_1^{b,b} (\bar{X}) = x(yY - zZ), \quad m_1^{b,b} (\bar{Y}) = y(zZ - xX), \quad m_1^{b,b} (\bar{Z}) = z(xX - yY).$$
Therefore generating cocycles of the cohomology as a $\mathbb{C} [x,y,z]$-module are 
\begin{equation}\label{eqn:addgen}
 e_\bL, \quad yY-zZ, \quad xX -yY 
 %\quad  zZ -xX
\end{equation}
whose module structure is determined by
\begin{equation}\label{eqn:subjectoll}
\begin{split}
xy\,e_\bL=yz\,e_\bL = zx\,e_\bL=z(&xX - yY) = x(yY - zZ) =0,\\
y (xX - yY) &= -y (yY - zZ) 
\end{split}
%= y(zZ - xX) =0.
\end{equation}
where the last relation is obtained from $y (zZ - xX) =0$ modulo coboundaries of $m_1^{b,b}$. 
%Notice that generators in \eqref{eqn:addgen} is not independent as the last three add up to zero.

Finally, products among these generators which do not involve $e_\bL$ vanish as the following computation shows.
\begin{align*}
m_2^{b,b,b} ( yY - zZ, xX - yY) &= m_2^{b,b,b} ( yY, xX - yY) - m_2^{b,b,b} (zZ, xX-yY) \\
&= xy \, m_2^{b,b,b} (Y,X) - zx \, m_2^{b,b,b} (Z,X) + yz \, m_2^{b,b,b} (Z,Y)\\
&= -xy\bar{Z} - zx \bar{Y} - yz\bar{X}= -m_1^{b,b}(f_\bL)=0\\
m_2^{b,b,b} ( xX - yY, xX - yY) &= -xy\left(m_2^{b,b,b}(X,Y)+m_2^{b,b,b}(Y,X)\right) = -xyz\,e_\bL=0.
\end{align*}
Here we used the extra fact that $m_2^{b,b,b} (X,X) = m_2^{b,b,b} (Y,Y) =0$ since its associated moduli is obviously empty.

Now suppose we have an abelian cover $X$ of the pair-of-pants $P$, and denote its deck transformation group by $G$. Applying the construction \ref{subsec:BmLF}, we have an action of the dual group $\hat{G}:=\hom(G,U(1))$ on the LG model. More precisely, $\hat{G}$ acts on $\mathbb{C}[x,y,z]$ by
$$ \chi \cdot x = \chi(g_X) x,\quad \chi \cdot y = \chi(g_Y) y,\quad \chi \cdot z = \chi(g_Z) z$$
where $g_X$ is determined by the condition $\WT{X} \in \WT{\bL} \cap g_X \cdot \WT{\bL}$ for some lifting $\WT{\bL}$ of $\bL$ and similar for $y$ and $z$. The action leaves $W$ invariant. 
 
We next investigate the closed-string B-model invariants of the LG mirror $W(x,y,z)=xyz$  and its orbifoldings with respect to the action of the dual group $\hat{G}$. 

\subsection{Orbifold Koszul cohomology of $(xyz,\HG)$}\label{subsec:orbkosxyz}
Let $G$ be a finite abelian group which is the deck transformation group for the covering $X$ of the pair-of-pants $P$. Then $\HG$ acts diagonally on $\mathbb{C}[x,y,z]$ and the polynomial $W=xyz$ is invariant under $\HG$-action. For $h\in \HG$ we have three cases.
\begin{enumerate}
\item $h=1$,
\item $h$ acts nontrivially on two variables,
\item $h$ acts nontrivially on three variables.
\end{enumerate}

 In case (1), the shifted Koszul complex $K^*(\partial W^h)=K^*(\partial W)$ is given by
 \[ \xymatrix{0 \ar[r] & \C[x,y,z]\cdot \theta_x\theta_y\theta_z \ar[rrr]^-{yz \partial_{\theta_x} + xz\partial_{\theta_y}+ xy\partial_{\theta_z}} & & &\C[x,y,z] \theta_x \theta_y \oplus \C[x,y,z]\theta_y\theta_z \oplus \C[x,y,z] \theta_x\theta_z}\]
 \[\xymatrix{\ar[rrr]^-{yz \partial_{\theta_x} + xz\partial_{\theta_y}+ xy\partial_{\theta_z}}& & & \C[x,y,z]\theta_x\oplus
 \C[x,y,z]\theta_y\oplus \C[x,y,z]\theta_z \ar[rrr]^-{yz \partial_{\theta_x} + xz\partial_{\theta_y}+ xy\partial_{\theta_z}}& & & \C[x,y,z] \ar[r] & 0.}\]
The shift is trivial since $I_h=\emptyset$ for $h=1$. The resulting cohomology is given as
\[ H^i(K^*(\partial W))=
\begin{cases}
    \frac{\langle x\theta_x-y\theta_y, y\theta_y-z\theta_z \rangle}{\langle yz\theta_y-xz\theta_x,xz\theta_z-xy\theta_y,yz\theta_z-xy\theta_x\rangle} & i=-1 \\
    \C[x,y,z]/\langle xy,yz,xz \rangle & i=0 \\
    0 & {\textrm{otherwise}}
\end{cases}
\]
(see \ref{subsec:popms} for detailed computations) and $H^i(K^* (\partial W))^{\HG}$ is the set of $\HG$-invariant elements.

 In case (2) we have $W^h=0$. Assume for example that $z$ is the $h$-invariant variable (we can handle other two cases similarly). Then the shifted Koszul complex $K^*(\partial W^h)\cdot \theta_x \theta_y$ is
 \[ \xymatrix{0 \ar[r] & \C[z]  \theta_z \theta_x \theta_y\ar[r]^0 & \C[z] \theta_x \theta_y \ar[r] & 0}\]
 and evidently
 \[ (H^{-1}(K^*(\partial W^h))\cdot \theta_x \theta_y)^\HG \cong (\C[z]  \theta_z \theta_x \theta_y)^\HG, \quad 
 (H^0(K^*(\partial W^h))\cdot \theta_x \theta_y)^\HG \cong (\C[z]  \theta_x \theta_y)^\HG.\]

 For the case (3), we do not have any element of Jacobian ideal because $Fix(h)=\{0\}$, and the shifted Koszul complex is 
 \[ \xymatrix{0 \ar[r] & \C\cdot \theta_x \theta_y \theta_z \ar[r]&0.}\]

% In each case the computation of cohomology is straightforward.
By the second statement of Proposition \ref{prop:floerkos} and the following, we can relate the Floer theory of $\bL$ and Koszul cohomology of $(xyz,\HG)$.
\begin{prop}\label{prop:CFKOS}
There is an isomorphism
\[\eta:CF((\bL,xX+yY+zZ),(\bL,x'X+y'Y+z'Z)) \cong \Delta_W^1.\]

%{\color{red} notation conflict: $\Theta$ is a graph in $P$ in most of places.} 
\end{prop}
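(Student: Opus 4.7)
The plan is to build $\eta$ as an explicit $S$-linear isomorphism of generating sets and then verify that it intertwines the Koszul differential on $\Delta_W^1$ with the $(b,b')$-deformed differential $m_1^{b,b'}$ on the Floer side. Both sides are free $S$-modules of rank $8$ and the $\mathbb{Z}/2$-grading inherited from the fractional grading in Subsection \ref{subsec:popFloer} dictates an essentially unique basis-level match
\[
\eta(e_\bL)=1,\ \eta(X)=\theta_x,\ \eta(Y)=\theta_y,\ \eta(Z)=\theta_z,
\]
\[
\eta(\bar X)=\theta_y\theta_z,\ \eta(\bar Y)=\theta_z\theta_x,\ \eta(\bar Z)=\theta_x\theta_y,\ \eta(f_\bL)=\theta_x\theta_y\theta_z,
\]
up to overall signs to be fixed below. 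For $W=xyz$ the target differential reads
\[
d=(x'-x)\theta_x+(y'-y)\theta_y+(z'-z)\theta_z+yz\,\partial_{\theta_x}+x'z\,\partial_{\theta_y}+x'y'\,\partial_{\theta_z},
\]
and it suffices to reproduce each summand geometrically.

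Most of the needed input has been assembled in Subsection \ref{subsec:popFloer}. Recall that $m_1^{b,b'}(a)=\sum m_{k+l+1}(b',\ldots,b',a,b,\ldots,b)$ with $b'$-inputs to the left and $b$-inputs to the right of $a$; by the fractional grading argument in Lemma \ref{lem:seidelLagoverc} only finitely many terms contribute. The strict unit axioms give $m_1^{b,b'}(e_\bL)=b-b'$, matching $d(1)$. For $a=X$ the nonzero contributions come from $m_2(b',X)$ and $m_2(X,b)$, producing $\bar Y,\bar Z$-terms with coefficients in $y',z'$ and $y,z$ respectively, and from $m_3(X,b,b)$ producing the $e_\bL$-term with coefficient $yz$ (the only triangle with consistent cyclic order places both inserted $b$'s to the right of $X$). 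This matches $d(\theta_x)=yz-(y'-y)\theta_x\theta_y-(z'-z)\theta_x\theta_z$. The crucial feature here is the asymmetry \emph{primed coefficients from inputs to the left of $X$, unprimed from the right}, which is exactly the asymmetry of $\nabla_i W$ (e.g.\ $\nabla_2 W=x'z$). The analyses for $Y,Z$ are symmetric, and the verifications on $\bar X,\bar Y,\bar Z$ and $f_\bL$ are the $(b,b')$-refinements of the $m_1^{b,b}$ computations already spelled out in Subsection \ref{subsec:popFloer}.

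The main obstacle is sign bookkeeping. Signs enter through (i) the nontrivial spin structure on $\bL$ (marked in Figure \ref{fig:seilag}), which flips a curve's sign each time the boundary crosses the marked point, (ii) the Koszul signs from anticommuting $\theta_i,\partial_{\theta_i}$, and (iii) the $A_\infty$-sign convention, which produces $m_2(X,Y)=\bar Z$ but $m_2(Y,X)=-\bar Z$ and similarly controls the signs of $m_3(X,Y,Z)$ under permutation. These are absorbed by adjusting the signs in the definition of $\eta$ on $\bar X,\bar Y,\bar Z,f_\bL$. As a cleaner conceptual shortcut, once one has checked that $(CF((\bL,b),(\bL,b')),m_1^{b,b'})$ is a matrix factorization of $W\boxminus W=x'y'z'-xyz$ whose even and odd parts are free $S$-modules of rank $4$ and which resolves the diagonal module $S/(x'-x,y'-y,z'-z)[-3]$ (the latter being forced by the computation of $m_1^{b,b'}$ on $e_\bL$), the uniqueness of such Koszul-type resolutions up to homotopy in the dg-category of matrix factorizations promotes $\eta$ to an isomorphism in $MF(W\boxminus W)$, which is all that is needed for Proposition \ref{prop:floerkos}.
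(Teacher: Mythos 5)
Your approach is essentially the one the paper takes: both fix the degree-respecting basis correspondence $\eta$ on the eight module generators and then check that it intertwines $m_1^{b,b'}$ with the Koszul differential of $\Delta_W^1$. The paper simply records the $8\times 8$ matrix of $m_1^{b,b'}$ and writes down $\eta$ (with compensating minus signs on $f_\bL,\bar X,\bar Y,\bar Z$), whereas you narrate where each matrix entry comes from, including the left/right asymmetry producing the $\nabla_i W$'s; that gloss is valid and absent from the paper's terse proof. One caution about the sign you state: with $\eta(e_\bL)=1$, $\eta(X)=\theta_x$, etc., the Koszul side gives $d(1)=(x'-x)\theta_x+(y'-y)\theta_y+(z'-z)\theta_z$, which corresponds under $\eta$ to $b'-b$, so the assertion that strict unitality yields $m_1^{b,b'}(e_\bL)=b-b'$ ``matching $d(1)$'' is internally off by a sign. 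That is exactly the kind of bookkeeping you flag, but note that it cannot be repaired by rescaling $\eta$ only on $\bar X,\bar Y,\bar Z,f_\bL$: the mismatch traces back to which of $b,b'$ you insert on which side, a convention that should be aligned with the definition $\nabla_i W=\big(W(x_1',\dots,x_i',x_{i+1},\dots)-W(x_1',\dots,x_{i-1}',x_i,\dots)\big)/(x_i'-x_i)$ before the sanity check is performed. Finally, the closing ``conceptual shortcut'' via uniqueness of Koszul-type matrix factorizations is a genuinely different route, not taken by the paper; to invoke it one needs more than the shape of $m_1^{b,b'}(e_\bL)$—one must actually identify the stable module underlying $(CF((\bL,b),(\bL,b')),m_1^{b,b'})$ with $S/(x'-x,y'-y,z'-z)$, which in practice requires essentially the same direct computation that the explicit $\eta$ already encodes.
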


\begin{proof}
Let $\{f_\bL, X,Y,Z, e_\bL, \bar{X},\bar{Y},\bar{Z}\}$ be an ordered basis of $CF(\bL,\bL)$ as a free module. $m_1^{xX+yY+zZ,x'X+y'Y+z'Z}$ can be computed explicitly using the same argument as in \ref{subsec:popFloer}, and in the matrix form with respect to the above basis, it is given as
\[ m_1^{xX+yY+zZ,x'X+y'Y+z'Z}= 
\left(\begin{array}{cccccccc}
0 & 0 & 0 & 0 & 0 & x'-x & y'-y & z'-z \\
0 & 0 & 0 & 0 & x'-x & 0 & -xy & z'x \\
0 & 0 & 0 & 0 & y'-y & xy & 0 & -y'z' \\
0 & 0 & 0 & 0 & z'-z & -z'x & y'z' & 0 \\
0 & y'z' & z'x & xy & 0 & 0 & 0 & 0 \\
y'z' & 0 & z'-z & y-y' & 0 & 0 & 0 & 0 \\
z'x & z-z' & 0 & x'-x & 0 & 0 & 0 & 0 \\
xy & y'-y & x-x' & 0 & 0 & 0 & 0 & 0
\end{array}\right).
\]
Then the isomorphism $\eta$ is given by
\begin{equation}\label{eq:eta}
\begin{array}{c}
\eta(X)=\theta_x, \; \eta(Y)=\theta_y,\; \eta(Z)=  \theta_z,\;
\eta(-f_\bL)= \theta_x \theta_y \theta_z,\\
 \eta(e_\bL)= 1,\;
\eta(-\bar{X}) =\theta_y \theta_z,\;
\eta(-\bar{Y}) = \theta_z \theta_x,\;
\eta(-\bar{Z}) = \theta_x\theta_y. 
\end{array}
\end{equation}

\qedhere 

\end{proof}
The following is straightforward from Proposition \ref{prop:floerkos}.
\begin{cor}\label{cor:cfkostau}
For $W=xyz$, there is a quasi-isomorphism
\[\tau: (CF(({\bL},b),({\bL},b);\HG))^{\HG} \cong \big(\bigoplus_{\chi\in \HG}K^*(\partial W^\chi)\cdot \theta_{I_\chi}\big)^\HG.\]
\end{cor}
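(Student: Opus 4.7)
The plan is to chain Proposition \ref{prop:CFKOS}, Proposition \ref{prop:floerkos}, Lemma \ref{lem:equivmfHH}, and the spectral-sequence calculation carried out inside the proof of Proposition \ref{prop:mfkos}. First, Proposition \ref{prop:CFKOS} verifies exactly the hypothesis of the second bullet of Proposition \ref{prop:floerkos}: the Floer bimodule $CF((\bL,b),(\bL,b'))$ for the Seidel Lagrangian is isomorphic to $\Delta_W^1$, with $W=xyz$. Applying Proposition \ref{prop:floerkos} then produces an $A_\infty$-isomorphism
\[(CF((\bL,b),(\bL,b);\HG))^{\HG} \;\cong\; hom_{MF_{\HG\times\HG}(W\boxminus W)}\bigl(\Delta_W^{\HG\times\HG},\,\Delta_W^{\HG\times\HG}\bigr),\]
which Lemma \ref{lem:equivmfHH} identifies with $\bigl(\bigoplus_{\chi\in\HG} hom_{MF(W\boxminus W)}(\Delta_W^1,\Delta_W^\chi)\bigr)^{\HG}$.

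The remaining step is to exhibit a quasi-isomorphism between each $hom_{MF(W\boxminus W)}(\Delta_W^1,\Delta_W^\chi)$ and the shifted Koszul complex $K^\ast(\partial W^\chi)\cdot\theta_{I_\chi}$. This is precisely the spectral-sequence computation already performed in the proof of Proposition \ref{prop:mfkos}: with respect to the horizontal filtration there, the $E_1$-page is canonically isomorphic to $K^\ast(\partial W^\chi)\cdot\theta_{I_\chi}$ with $d_1$ the Koszul differential, and the spectral sequence collapses at $E_2$ because $d_i=0$ for $i\geq 2$. Consequently both complexes are bounded complexes of $\mathbb{C}$-vector spaces with identical cohomology and are therefore chain-homotopy equivalent by formality over a field; selecting such an equivalence on each summand yields the desired quasi-isomorphism. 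Since $|\HG|<\infty$ and $\mathrm{char}\,\mathbb{C}=0$, taking $\HG$-invariants is exact and preserves the quasi-isomorphism, producing $\tau$.

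I do not anticipate any substantive obstacle; the corollary is labelled ``straightforward'' precisely because every ingredient (the matrix-factorization model of the diagonal, the equivariant decomposition of Lemma \ref{lem:equivmfHH}, and the degenerating spectral sequence) has already been assembled. The only bookkeeping worth flagging is the grading shift by $\theta_{I_\chi}$ on each summand and the compatibility of the $\HG$-action through the chain of identifications, both of which are already tracked in the statements and proofs of Propositions \ref{prop:floerkos} and \ref{prop:mfkos}; in particular, no new geometric or analytic input beyond these is required.
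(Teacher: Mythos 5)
Your argument is correct and follows the same route the paper intends: the paper calls the corollary ``straightforward from Proposition \ref{prop:floerkos},'' and you spell out exactly that chain --- Proposition \ref{prop:CFKOS} supplies the hypothesis of Proposition \ref{prop:floerkos}(ii), Lemma \ref{lem:equivmfHH} passes to the $\HG$-invariant direct sum, and the degenerating spectral sequence from Proposition \ref{prop:mfkos}, together with formality of complexes over a field of characteristic zero and exactness of finite-group invariants, yields the quasi-isomorphism. The only thing worth noting is that the paper's subsequent computations (formula \eqref{eqn:tautautau} and the proof of Theorem \ref{thm:mainmain}) rely on a \emph{specific} $\tau$ determined by the filtration, which your abstract formality argument does not pin down; but since the corollary itself asserts only existence of a quasi-isomorphism, your proof is complete for the statement as given.
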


Let $f_1,\cdots,f_k \in R$ and $p_1,\cdots,p_k\in \{f_\bL,X,Y,Z,e_\bL,\bar{X},\bar{Y},\bar{Z}\}$ such that $\alpha_\chi=(\sum f_i p_i)\otimes \chi$ is an element representing a class in $H^*(CF ((\bL,b),(\bL,b);\hat{G}))^\HG)$. Among $\{p_1,\cdots,p_k\}$, let $p_1,\cdots,p_l$ be cochains whose fractional gradings are maximal. Then we have
\begin{equation}\label{eqn:tautautau}
 \tau(\alpha_\chi)=\sum_{i=1}^l f_i|_{Fix(\chi)} \cdot\eta(p_i)\in (K^*(\partial W^\chi)\cdot \theta_{I_\chi})^\HG. 
\end{equation}
Here we consider $\eta(p_i)$ as an element of $\Bbbk\langle \theta_x,\theta_y,\theta_z \rangle$ via the  identification \eqref{eq:eta}.

\section{Equivariant constructions}\label{sec:equivconst}

Our main interest lies in the situation where a finite abelian group $G$ acts freely on a Liouville manifold $(X,\omega=d\theta)$ preserving relevant structures. Since the action is free, it can be also understood as a principal $G$-bundle $G \to X \to B$ on a Liouville manifold $B$ and the Liouville structure on $X$ is pulled back from $B$. We will see that the Floer invariants (symplectic cohomology and Lagrangian Floer cohomology) of $B$ naturally admits an action of the dual group $\hat{G}$. In this section, we aim to reconstruct Floer invariants of $X$ from those of $B$ by carrying out suitable $\hat{G}$-equvariant constructions.

%If $p:(\widetilde{X}, \tilde{\omega} = d \tilde{\lambda} ) \to (X, \omega = d \lambda)$ is a covering between two Liouville manifolds such that $p^\ast \lambda = \tilde{\lambda}$, it takes a cylindrical end in $\widetilde{X}$ to a cylindrical end in $X$.

%
%\begin{remark}
%The fact that $G$ is abelian is important, as our construction involves the dual group $\hat{G}$ significantly. However, a large part of the construction here can be reformulated avoiding $\hat{G}$, and has a chance to generalize for non-abelian $G$.  
%\end{remark}

We first discretize the data of the $G$-bundle $X \to B$ along a certain submanifold of $B$ so that the holomorphic curves for Floer invariants can effectively capture the equivariant information arising from $G$-quotient.

\subsection{Trivializing the principal $G$-bundle $X \to B$}\label{subsec:Thetagraph}

Given a principal $G$-bundle $X \to B$, we choose its particular local trivialization in the following way. Suppose there exists an embedded oriented (possibly noncompact) submanifold $\Theta$ of $B$ of codimension $1$ such that each component of $B \setminus \Theta$ is simply connected. We additionally require that $\Theta$ defines a cycle in the Borel-Moore (locally finite) homology $H^{BM}_{2n-1} (B;\mathbb{Z})$, and hence its Poincar\'e dual gives an element of $H^1 (B;\mathbb{Z})$.
%It is clear from the picture that when $X$ is a punctured Riemann surface, one can take a disjoint union of embedded arcs as $\Theta$, and components of $X \setminus \Theta$ are homeomorphic to disks. {
For instance, the union of cocores in 1-handles can serve as $\Theta$ for Weinstein manifolds since their complement is a union of simply connected subsets.

Once $\Theta$ (together with its orientation) is fixed, the principal $G$-bundle $X$ can be described as the gluing of the trivial bundles across each connected component of $\Theta$. Therefore one can assign \emph{gluing data} to $\Theta$, that is, an element $g_i \in G$ to a component $\Theta_i$ of $\Theta$. (Conversely, if the data $\{ (\Theta_i, g_i)\}$ are given, one can reconstruct the $G$-bundle up to isomorphism.)

More specifically, for each component $U$ of $B \setminus \Theta$, we fix an equivariant trivialization $p^{-1}(U) \cong U \times G$ where the $G$-action on the right hand side is given by the \emph{left} multiplication on $G$. Suppose $U$ and $V$ are two components of $B \setminus \Theta$ that share $\Theta_i$ as their common boundaries, and consider a small path $\gamma$ going across $\Theta_i$ in such a way that $\gamma'$ followed by the given orientation of $\Theta_i$ form the positive orientation of $B$.
Then the gluing of the two associated trivializations $U \times G$ and $V \times G$ (each equipped with the left $G$-action) along $\gamma$ is given by the \emph{right} multiplication of $g_i$ to the $G$-factor, preserving the left $G$-action.

\begin{remark}\label{rmk:Thetagraph1}
For a punctured Riemann surface, one can allow $\Theta$ to be a graph each of whose edge carries an element of $G$ subject to the triviality condition around each vertex. (We still require that its complement is a union of simply connected regions.) Namely if we take the cyclic product of group elements assigned to edges incident to the same vertex, then the resulting element should be the identity in $G$.
\end{remark}

Making use of the gluing data $\{(\Theta_i,g_i)\}$, one can naturally assign a group element to a path in $B$.

\begin{defn}\label{def:ggamma1}
Let $\gamma: [0,1] \to B$ be a smooth path in $X$. Suppose 
$\gamma$ transversally intersects $\Theta$ at $t=c_1,c_2\cdots, c_l$ with $c_1<\cdots<c_l$. If $\epsilon_{k} \in \{-1,1\}$ denotes the parity of the intersection $\gamma$ and $\Theta$ at $\gamma(c_k)$, then the $G$-labeling $g_\gamma$ of the path $\gamma$ is defined as the product
$$g_\gamma:=g_{1}^{\epsilon_{1}} g_{2}^{\epsilon_{2}} \cdots g_{l}^{\epsilon_{l}}.$$
\end{defn}

Note that if $\gamma_1$ and $\gamma_2$ are homotopic relative to endpoints, then their $G$-labelling agrees since $\Theta$ is a cycle. Hence one can define $g_\gamma$ for $\gamma$ in a nongeneric position to be the labelling of a small perturbation of $\gamma$ (relative to endpoints). When $B$ is a punctured Riemann surface and $\Theta$ is given as a graph, one additionally consider the homotopy of $\gamma$ across a vertex, which does not cause any ambiguity due to triviality condition around the vertex. Also, if $\gamma$ is homotopic to a concatenation of other two paths $\gamma_1$ and $\gamma_2$, then $g_\gamma = g_{\gamma_1} \, g_{ \gamma_2}$.

It is easy to see that for a loop $\gamma$ in $B$ and its lifting $\tilde{\gamma}$ in $X$, $\tilde{\gamma}(1) = \tilde{\gamma} (0)$ if and only if $g_\gamma=1$. This can be generalized for general paths in $B$ and their liftings.
Recall that we have fixed a trivialization of $p:X \to B$ over the complement $B \setminus \Theta$. This gives rise to a map  
$$l: p^{-1} (B \setminus \Theta) \to G$$
which is nothing but the trivialization $p^{-1} (B \setminus \Theta) \cong B \setminus \Theta \times G$ followed by the projection to $G$. Hence $l$ labels locally the sheets of the covering $X \to B$ by elements of $G$. Obviously, the labelling $l$ is $G$-equivariant,
$$l (g\cdot x) = g l(x)$$
where the $G$-action on the right hand side is given by left multipilication.

Since $g_\gamma$ encodes the product of jumps of sheets across $\Theta$ when traveling along $\gamma$, we have:

\begin{lemma}\label{lem:pathliftendpts}
Suppose $\gamma$ is a path in $X$, and consider its arbitrary lifting $\tilde{\gamma}$ in $\WT{X}$. Then
$$ g_\gamma = l(\tilde{\gamma}(0))^{-1}l(\tilde{\gamma}(1)).$$
\end{lemma}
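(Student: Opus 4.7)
The plan is to reduce to the two elementary cases (no crossing of $\Theta$, and a single transverse crossing) and assemble the general statement by concatenation, relying on the fact that the labelling $l$ has been defined so that sheet changes across $\Theta$ are precisely encoded by the gluing data $\{(\Theta_i,g_i)\}$.

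First, after perturbing $\gamma$ slightly relative to its endpoints (which does not change either side of the claimed identity: the left side because $\Theta$ is a cycle, the right side by homotopy lifting), we may assume $\gamma$ meets $\Theta$ transversally at finitely many parameters $0<c_1<\cdots<c_l<1$ and that both endpoints lie in $B\setminus\Theta$. Decompose $\gamma$ as a concatenation $\gamma^{(0)}*\gamma^{(1)}*\cdots*\gamma^{(l)}$ where $\gamma^{(k)}$ ($1\le k\le l$) is a small segment containing exactly the crossing at $c_k$, and $\gamma^{(0)}$, $\gamma^{(l)}$ (and the remaining pieces) lie entirely in $B\setminus\Theta$. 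Lifting term by term, one has $\tilde{\gamma}=\tilde{\gamma}^{(0)}*\cdots*\tilde{\gamma}^{(l)}$, and since $l$ is defined at all the endpoints, the factors $l(\tilde{\gamma}^{(k)}(1))^{-1}l(\tilde{\gamma}^{(k+1)}(0))$ telescope. Hence it suffices to establish the identity for each segment.

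For a segment that misses $\Theta$, its image lies in a single component $U$ of $B\setminus\Theta$, where by construction $p^{-1}(U)\cong U\times G$ and $l$ is the projection to $G$. Any lift therefore has the form $t\mapsto(\gamma(t),h)$ for a constant $h\in G$, so $l(\tilde{\gamma}(0))^{-1}l(\tilde{\gamma}(1))=1$, matching the empty product $g_\gamma=1$. For a segment $\gamma^{(k)}$ crossing $\Theta_i$ once with parity $\epsilon_k\in\{\pm1\}$, the key input is the gluing prescription stated just before the lemma: if $U$ and $V$ are the components of $B\setminus\Theta$ on either side of $\Theta_i$ with the convention that the oriented tangent to a path from $U$ to $V$ together with the orientation of $\Theta_i$ gives the positive orientation of $B$, then the trivializations $U\times G$ and $V\times G$ are identified by right multiplication by $g_i$. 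Consequently, if $\tilde{\gamma}^{(k)}(0)$ has label $h$ in the $U$-trivialization, then $\tilde{\gamma}^{(k)}(1)$ has label $hg_i^{\epsilon_k}$ in the $V$-trivialization, which yields
\[l(\tilde{\gamma}^{(k)}(0))^{-1}l(\tilde{\gamma}^{(k)}(1))=h^{-1}(hg_i^{\epsilon_k})=g_i^{\epsilon_k}=g_{\gamma^{(k)}}.\]

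Multiplying these local identities along the concatenation produces $l(\tilde{\gamma}(0))^{-1}l(\tilde{\gamma}(1))=g_{i_1}^{\epsilon_1}\cdots g_{i_l}^{\epsilon_l}=g_\gamma$, as desired. The main point to be careful about is the sign convention for $\epsilon_k$, i.e.\ that a negatively-oriented crossing contributes $g_i^{-1}$ rather than $g_i$; once that orientation bookkeeping is in place (it is dictated by the convention fixed in the gluing prescription), the rest is formal. The argument also extends without change to the situation where $\Theta$ is a graph, provided one handles the homotopy of $\gamma$ past a vertex using the triviality of the product of labels around each vertex.
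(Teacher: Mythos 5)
Your proof is correct and follows the same approach the paper implicitly takes: the paper simply asserts the lemma as a direct consequence of the way $g_\gamma$ and the trivializations were set up ("Since $g_\gamma$ encodes the product of jumps of sheets across $\Theta$..."), and your argument is exactly the careful concatenation/telescoping verification of that assertion, with the sign convention handled correctly.
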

As $l(\tilde{\gamma}(0)) g_\gamma = l(\tilde{\gamma}(1))$ suggests, the labelling $g_\gamma$ to the path $\gamma$ measures the jump of sheets between endpoints of any of its lifting. 

\subsection{$\hat{G}$-equivariant symplectic chain complex}\label{subsec:constequivsc}
Let us now assume $B$ is a Liouville manifold, and pullback the Liouville structure to the principal $G$-bundle $X$ via the quotient map. Since the $G$-action is free, $X$ is equipped with a $G$-invariant Liouville structure. We proceed similarly to construct a  $G$-invariant almost complex structure and a Hamiltonian on $X$. Since the action is free, the Floer datum for $X$ arising in this way is generic.
For instance, $SC^*(X)$ admits an induced $G$-action for that on $X$ due to this choice.

Due to $G$-invariance, the image of a $1$-periodic Hamiltonian orbit in $X$ maps to a Hamiltonian orbit in $B$ under the quotient map $X \to B$, and the same is true for $J$-holomorphic curves upstairs. On the other hand, covering theory gives us an obstruction to lift orbits and holomorphic curves in $B$ to $X$. The gluing data $\{(\Theta_i,g_i)\}$ chosen above can be used to effectively detect liftable objects in $B$, especially those related with Floer theory of $X$.

We first define the action on the dual group $\hat{G}:=\hom(G,U(1))$ (the group of characters on $G$) on the downstair cochain complex $SC^*(B)$. For a character $\chi \in \hat{G}$ and a path $\gamma$, we set its value on $\gamma$ to be
\begin{equation}\label{eqn:chiactonpath}
\chi (\gamma) := \chi (g_\gamma).
\end{equation}
If $\alpha:[0,1]\to B$ is a Hamiltonian orbit in $B$ viewed as a generator of $SC^* (B)$, then $\chi$ acts on $\alpha$ by $\chi \cdot \alpha := \chi(\alpha) \alpha$. By linear extension, we have a chain-level action on the symplectic cohomology of $B$. As previously remarked, a loop $\gamma$ lifts to a loop if and only if $g_\gamma = 1$, whence $\chi(\gamma) = 1$ for all $\chi\in\HG$. On the other hand, if $G$ is abelian, $\chi (g) = 1$ for all $\chi \in \HG$ if and only if $g = 1$. Therefore, if $G$ is abelian, the cochains in $SC^*(B)$ that are liftable to $SC^*(X)$ are precisely the cochains which are invariant under the $\hat{G}$-action.

Hereby we assume $G$ is abelian. More interesting part is the $\hat{G}$-twisting on the moduli space of holomorphic curves in $B$ relevant to the algebraic structure on $SC^* (B)$. In fact, we will enlarge $SC^* (B)$ to 
\begin{equation}\label{eqn:schatg}
SC^*_{\HG} (B):= SC^* (B) \otimes \Bbbk [ \hat{G} ],
\end{equation}
and construct a $\hat{G}$-\emph{twisted} algebraic structure on it to reflect the lifting information. 
%Readers are warned that $SC_{\hat{G}} (B)$ is irrelevant to the equivariant symplectic cohomology. After all, $B$ does not have a geometric $\HG$-action. 
More precisely, recall from Lemma \ref{lem:equatorsh} that the domain of each element $u$ in the moduli space of pseudo-holomorphic curves associated with algebraic operations $SC^*(B)$ carries a prescribed path $\gamma_{u,k}$ from (the starting point of) the $k$-th input orbit to the output. The differential $d_{\hat{G}}$ on $SC_\HG^{*} (B)$ is then given by
\begin{equation}\label{eqn:equivdiff}
 d_{\hat{G}} (x \otimes \chi) := \sum_{y:orbits} \sum_{u \in \mathcal{M}(y;x)} \chi(\gamma_{u, 1}) (y \otimes \chi) 
\end{equation}
where $\gamma_{u,1}$ is a path from $x$ to $y$ in this case. Similarly, the product on $SC_\HG^{*} (B)$ is defined as
\begin{equation}\label{eqn:equivprod}
(x_1 \otimes \chi_1) \cdot (x_2 \otimes \chi_2) = \sum_{y:orbits} \sum_{u \in \mathcal{M}(y;x_1,x_2)} \chi_1(\gamma_{u,1}) \chi_2 (\gamma_{u,2}) ( y \otimes \chi_1 \chi_2).
\end{equation}
With these operations, we have:
\begin{prop}
Equation \eqref{eqn:equivdiff} defines a cochain complex equipped with a product \eqref{eqn:equivprod} satisfying the Leibnitz rule (and associative on the cohomology) on $SC_\HG^* (B)$. Both of operations are $\hat{G}$-equivariant where the $\hat{G}$-action is defined by
\begin{equation}\label{eqn:hatGactionSC}
 \chi \cdot (x \otimes \chi') := (\chi \cdot x) \otimes \chi'
\end{equation}
viewing $x$ as a path (loop) in $B$. 
\end{prop}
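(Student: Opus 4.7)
My plan is to verify, in order, $\hat{G}$-equivariance, then $d_{\hat{G}}^2=0$, associativity, and the Leibniz rule, reducing each statement to the corresponding ordinary statement for $SC^\ast(B)$ by tracking the weights $\chi(\gamma_{u,k})$ through the standard moduli-theoretic arguments. Two structural facts about the labelling, both already established, will be used repeatedly: (i) $g_\gamma$ depends only on the homotopy class of $\gamma$ rel endpoints, so that for $G$ abelian the labelling of a loop $\alpha$ is determined by its class $[\alpha]\in H_1(B;\mathbb{Z})$ (hence also by its free homotopy class); and (ii) $g$ is multiplicative under concatenation of paths.

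For $\hat{G}$-equivariance of the differential, given a cylinder $u\in\mathcal{M}(y;x)$ I need $\chi(x)=\chi(y)$. The cylinder furnishes a free homotopy between the loops $x$ and $y$ in $B$; by (i) and the assumption that $G$ is abelian, free homotopy classes determine the labelling, so $g_x=g_y$, hence $\chi(x)=\chi(y)$. For equivariance of the product, a pair-of-pants $u\in\mathcal{M}(y;x_1,x_2)$ provides a cobordism realizing the homological relation $[y]=[x_1]+[x_2]$ in $H_1(B;\mathbb{Z})$; then (i) and (ii) give $g_y=g_{x_1}g_{x_2}$ in $G$, i.e. $\chi(y)=\chi(x_1)\chi(x_2)$, which is precisely the identity required for
\[
\chi\cdot\bigl((x_1\otimes\chi_1)(x_2\otimes\chi_2)\bigr) \;=\; \bigl(\chi\cdot(x_1\otimes\chi_1)\bigr)\cdot\bigl(\chi\cdot(x_2\otimes\chi_2)\bigr).
\]

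To check $d_{\hat{G}}^2=0$ I would examine the codimension-$1$ boundary of the relevant $1$-dimensional moduli, which consists of broken cylinders $\mathcal{M}(z;x)\times\mathcal{M}(y;z)$. For a boundary point $(u_1,u_2)$, the coherent path $\gamma_{u,1}$ on the unbroken side degenerates, by Lemma \ref{lem:equatorsh}, to the concatenation $\gamma_{u_1,1}\cdot\gamma_{u_2,1}$ (the alignment of punctures along the equator of $\mathbb{C}P^1$ breaks into precisely the two equator lines in the bubbled cylinders). Multiplicativity of $g$ then yields $\chi(\gamma_{u,1})=\chi(\gamma_{u_1,1})\,\chi(\gamma_{u_2,1})$, so each boundary contribution to $d_{\hat{G}}^2$ carries exactly the weight of the corresponding term in the unweighted $d^2$, and the usual sign-cancellation on boundary components gives $d_{\hat{G}}^2=0$. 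Associativity and the Leibniz rule are proved identically by considering $1$-dimensional moduli of configurations with three and two inputs respectively, and observing that at every codimension-$1$ degeneration the single coherent path splits into a concatenation of the coherent paths on the two components, so that the characters $\chi_i$ factorize in the way demanded by the algebra axioms.

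The only nontrivial point is that the coherent paths match correctly under boundary degenerations, which is exactly the coherence content of Lemma \ref{lem:equatorsh}; once this is in hand, the whole proof becomes bookkeeping with characters, using only that $G$ is abelian (so that $g_\gamma$ descends to a homomorphism $H_1(B;\mathbb{Z})\to G$) and the multiplicativity of $g$ under concatenation. Transversality is not an issue beyond the standard setup, since freeness of the $G$-action on $X$ ensures that a generic $G$-invariant Floer datum on $X$ descends to a generic datum on $B$.
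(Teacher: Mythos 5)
Your proof is correct and follows essentially the same route as the paper's: reduce every axiom to the fact that $g_\gamma$ is homotopy-invariant rel endpoints and multiplicative under concatenation, and then invoke the coherence of the chosen paths under moduli degeneration (Lemma \ref{lem:equatorsh}) to propagate the unweighted identities to the weighted ones. You spell out the $\hat{G}$-equivariance claim more explicitly than the paper does (via the observation that Floer solutions induce the homological relations $[y]=[x]$ and $[y]=[x_1]+[x_2]$ on asymptotics, which together with abelianness gives $g_y=g_x$ and $g_y=g_{x_1}g_{x_2}$), but the underlying mechanism is the same.
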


\begin{proof}
Recall that the $g_\gamma$ for a path $\gamma$ in $B$ is invariant under a path-homotopy (relative to end point), and is consistent with concatenation of paths. Since we already have such structures well-established on $SC^* (B)$, it only remains to check if the $\hat{G}$-twist is coherent. Namely, we need to show that concatenated paths at the two ends of the cobordism we employed to prove algebraic compatibility for $SC^*(B)$ (such as $d^2 =0$ and the Liebnitz rule) are homotopic in order to achieve the analogous algebraic compatibility for $SC_\HG^* (B)$. This is straightforward from Lemma \ref{lem:equatorsh}. 
\end{proof}

\begin{remark}
We emphasize that although they have the same underlying vector space, $SC_\HG^*(B)$ is different from the algebraic semi-direct product of $SC^* (B)$ with respect to $\hat{G}$-action on it. Algebraic operations on $SC_\HG^*(B)$ are not completely determined by those in $SC^* (B)$ and the $\HG$-action, as one needs to keep track of finer homotopy data of individual holomorphic maps contributing to algebraic operations.
\end{remark}

Notice that we can actually define an $A_\infty$-structure on $SC_\HG^* (B)$ with $m_k$ given analogously to \eqref{eqn:equivdiff} and \eqref{eqn:equivprod}, making use of all the paths from inputs to the output. Then the same argument proves the $A_\infty$-relation among the operations $m_k^{\hat{G}}$, and that the resulting structure is $\hat{G}$-equivariant. We will not consider this $A_\infty$-structure in the paper. 

Let us look into the $\hat{G}$-action on $SC_\HG^* (B)$ more closely. As discussed, the nontriviality of the $\hat{G}$-action reveals the obstruction for a Hamiltonian orbit to lift to $X$. In fact, one has the following.

\begin{prop}\label{prop:isomupdownsc}
There is a $G$-equivariant chain-level isomorphism
$$SC^* (X) \to \left(SC_\HG^* (B) \right)^{\hat{G}}$$ 
intertwining differentials and products, where the $G$-action on the right side is given by $g \cdot (\alpha \otimes \chi) = \chi(g) \alpha \otimes \chi$. 
%the right hand side denotes the $\hat{G}$-invariant part of $SC^\ast (X;\HG)$.
\end{prop}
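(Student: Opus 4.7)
The plan is to write down $\Phi$ explicitly using the fiberwise labelling $l$ and to verify (i) its image lies in the $\HG$-invariants, (ii) it is bijective there, (iii) it intertwines the two $G$-actions, and (iv) it commutes with the differentials. First, a Hamiltonian orbit $\alpha$ in $B$ lifts to $X$ if and only if $g_\alpha = 1$, in which case it admits exactly $|G|$ lifts $\{\WT\alpha_g\}_{g\in G}$ with $l(\WT\alpha_g(0))=g$, permuted by $h\cdot\WT\alpha_g = \WT\alpha_{hg}$. On the other hand, \eqref{eqn:hatGactionSC} shows $\alpha\otimes\chi$ is $\HG$-invariant if and only if $g_\alpha=1$, using that $G$ is abelian. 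I therefore define
\[
\Phi(\WT\alpha_g) \;:=\; \sum_{\chi\in\HG}\chi(g)\,\alpha\otimes\chi
\]
and extend linearly. By construction the image lands in $(SC^\ast_\HG(B))^\HG$, and Fourier inversion over $G$ recovers $\alpha\otimes\chi = \frac{1}{|G|}\sum_{g\in G} \chi(g)^{-1}\Phi(\WT\alpha_g)$, giving bijectivity.

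For $G$-equivariance, the identity $l(h\cdot\WT\alpha_g(0))=hg$ yields
\[
\Phi(h\cdot\WT\alpha_g)=\sum_\chi\chi(hg)\,\alpha\otimes\chi = \sum_\chi\chi(h)\chi(g)\,\alpha\otimes\chi = h\cdot\Phi(\WT\alpha_g),
\]
matching the $G$-action on the right side.

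The substantive step is the chain map property, which is where Lemmas \ref{lem:equatorsh} and \ref{lem:pathliftendpts} enter. Given a rigid pseudo-holomorphic cylinder $u$ in $B$ from $\alpha$ to $\beta$, its unique lift starting at $\WT\alpha_g$ terminates at $\WT\beta_{g\cdot g_{\gamma_{u,1}}}$: the lift of the coherent equatorial path $\gamma_{u,1}$ (supplied by Lemma \ref{lem:equatorsh}) starts at $\WT\alpha_g(0)$, and Lemma \ref{lem:pathliftendpts} identifies its endpoint as the sheet labelled $g\cdot g_{\gamma_{u,1}}$. Summing over $u$,
\[
\Phi(d^X\WT\alpha_g) = \sum_u\sum_\chi \chi(g\cdot g_{\gamma_{u,1}})\,\beta\otimes\chi = \sum_\chi\chi(g)\sum_u\chi(g_{\gamma_{u,1}})\,\beta\otimes\chi = d_\HG\Phi(\WT\alpha_g)
\]
agrees with the definition \eqref{eqn:equivdiff} of $d_\HG$. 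The same path-lifting argument extends verbatim to the continuation maps $\mathfrak{K}$ between $CF(wH)$ and $CF((w+1)H)$ and to the formal variable $\mathbf{q}$, since the Hamiltonians $wH$ are $G$-invariant, so $\Phi$ assembles into a chain map on all of $SC^\ast$. The only real obstacle is the bookkeeping of sheet jumps along $\gamma_{u,1}$; the virtue of fixing equatorial paths in Lemma \ref{lem:equatorsh} is precisely that this bookkeeping is consistent under degenerations, producing exactly the weight $\chi(g_{\gamma_{u,1}})$ appearing in the downstairs twisted differential.
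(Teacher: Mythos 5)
Your construction of $\Phi$ and its verification follow the paper's proof almost verbatim, up to an overall normalization: the paper's map $\Psi$ carries an extra factor $\tfrac{1}{|\HG|}$, which only changes the formula for the inverse and is immaterial for the isomorphism claim. Your chain-map computation via the coherent equatorial path of Lemma~\ref{lem:equatorsh} and the endpoint-tracking of Lemma~\ref{lem:pathliftendpts} (yielding $h = g\cdot g_{\gamma_{u,1}}$ for the lifted cylinder) is exactly the paper's argument, and noting the extension to continuation maps and the $\mathbf q$-bookkeeping is a reasonable addition.

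One thing to be aware of: the paper's proof also explicitly verifies that $\Psi$ respects the product, i.e.\ that it is a dga morphism, not merely a cochain isomorphism. This takes up roughly half of the proof and is needed downstream, since the chain of maps in Theorem~\ref{thm:mainksrs} is claimed to consist of quasi-isomorphisms of rings. If one reads ``chain-level isomorphism'' as requiring compatibility with $\mu_2$, your argument is incomplete here; the product case runs on the same path-lifting idea but requires a genuine extra step (one sums over pairs $(\chi_1,\chi_2)$ with $\chi_1\chi_2 = \chi_0$ and uses orthogonality of characters to force the two boundary lifts onto the same sheet of the cover). You should at least state that the identical mechanism handles $\mu_2$ and sketch where the orthogonality comes in, rather than treating it as silently covered by ``the bookkeeping of sheet jumps along $\gamma_{u,1}$.''
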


%The isomorphism is not canonical. It depends 

\begin{proof}
By definition of the $\hat{G}$-action \eqref{eqn:hatGactionSC}, the right hand $\left(SC_\HG^* (B) \right)^{\hat{G}}$ is generated by images of Hamiltonian orbits in $X$ coupled with characters in $\hat{G}$. Suppose $a$ is such a Hamiltonian orbit in $B$, and hence is liftable to some orbit in $X$. Since the $G$-action on $X$ is free, there are $|G|$-many lifts of $a$. We label these lifts by elements of $G$ using our labelling $l: p^{-1} (B \setminus \Theta) \to G$ of (local) sheets of the covering $X \to B$. We denote by $\tilde{a}_g$ the Hamiltonian orbit in $X$ which lifts $a$ such that $l (\tilde{a}_g (0)) = g$. (Generically, the starting point $a(0)$ is away from $\Theta$.) Note that any Hamiltonian orbit in $X$ is given in this form.

Now, with this notation, we define a chain-level map
\[\Psi:SC^*(X) \to \left(SC_\HG^* (B)\right)^{\hat{G}} \qquad
 \WT{a}_g \mapsto \sum_{\chi\in \HG} \frac{\chi(g) ( a  \otimes \chi) }{|\HG|}.\]
Since $a$ is a liftable loop, $\Psi$ lands on the $\hat{G}$-invariant part of $SC_\HG^*(B)$. Moreover, $\Psi$  is a bijection as its inverse (set-theoretical, at this moment) is given by
\[\Psi^{-1} :\left( SC_\HG^* (B) \right)^{\HG}\to SC^*(X) \qquad a \otimes\chi\mapsto \sum_{g\in G}\chi(g^{-1})\WT{a}_g.\]
Hence, it suffices to show that $\Psi$ intertwines algebraic structures on both sides. $G$-equivariance of $\Psi$ follows directly from $h \cdot \tilde{a}_g = \tilde{a}_{hg}$.

To see that it is a chain map, consider a Floer cylinder $\WT{u}:(-\infty,\infty)\times S^1 \to X$ asymptotic to Hamiltonian orbits $\WT{a}_g$ and $\WT{b}_h$ at $\pm \infty$. Write $a:=p\circ \WT{a}_g$, $b:=p\circ \WT{b}_h$, and $u:=p\circ \WT{u}$. Recall that we have chosen a path from an input marking $(-\infty,0)$ to the output marking $(\infty,0)$ (simply $\{t=0\}$ in the case of cylinders), which determines a path $\gamma_{u, 1}$ joining starting points $a(0)$ and $b(0)$. Let $\tilde{\gamma}$ be its lift starting from the $\tilde{a}_g(0)$, which should end at $\tilde{b}_h(0)$ by the uniqueness of the lifting(we lifted $\tilde{u}$).
On the other hand, by Lemma \ref{lem:pathliftendpts}, we have
$$ h=l(\tilde{\gamma} (1)) = l(\tilde{\gamma} (0))\, g_{\gamma_{u,1}} = g\, g_{\gamma_{u,1}} .$$
%which implies $\chi(\gamma_{u,1}) = \chi (g)^{-1} \chi(h)$. 
This establishes the correspondence between $\mathcal{M}(\tilde{a}_g,\tilde{b}_h)$ and $\{ u \in \mathcal{M}(a,b): g_{\gamma_{u,1}} = g^{-1}h\}$.
Therefore, the coefficient $\langle d_{\hat{G}} (\Psi(\WT{a}_g)) , b \otimes \chi_0 \rangle$ of $b \otimes \chi_0 $ in $d_{\hat{G}} (\Psi(\WT{a}_g))$ is given by
\begin{align*}
\langle d_{\hat{G}} (\Psi(\WT{a}_g)) , b \otimes \chi_0 \rangle & = \left\langle d_{\hat{G}} \left(\sum_{\chi\in \HG} \frac{\chi(g)a\otimes \chi}{|\HG|}\right) , b \otimes \chi_0\right\rangle\\
&= \left\langle  \frac{\chi_0(g) d_{\hat{G}}(a\otimes \chi_0)}{|\HG|}, b\otimes \chi_0 \right\rangle\\
&=  \sum_{k \in G} \sum_{ \substack{u \in \mathcal{M}(a,b) \\  g_{\gamma_{u, 1}} = k    } } \frac{\chi_0 (g)\chi_0 (\gamma_{u,1} )}{|\HG|}\\
&=  \sum_{k \in G} \sum_{ \tilde{u} \in \mathcal{M} (\tilde{a}_g, \tilde{b}_{  gk})  } \frac{\chi_0 (gk)} {|\HG|} =  \sum_{h \in G} \sum_{ \tilde{u} \in \mathcal{M} (\tilde{a}_g, \tilde{b}_h)  } \frac{\chi_0 (h)} {|\HG|}
\end{align*}

On the other hand,
\begin{align*}
\langle \Psi(d (\WT{a}_g)) , b \otimes \chi_0 \rangle & = \left\langle  \Psi \left(\sum_{h \in G} \sum_{\tilde{u} \in \mathcal{M}(\tilde{a}_g,\tilde{b}_h)} \tilde{b}_h \right), b \otimes \chi_0 \right\rangle\\
&=\left\langle    \sum_{h \in G} \sum_{\tilde{u} \in \mathcal{M}(\tilde{a}_g,\tilde{b}_h)} \sum_{\chi \in \hat{G}} \frac{\chi(h) (b \otimes \chi)}{|\hat{G}|}, b \otimes \chi_0 \right\rangle \\
&=   \sum_{h \in G} \sum_{ \tilde{u} \in \mathcal{M} (\tilde{a}_g, \tilde{b}_h)  } \frac{\chi_0 (h)} {|\HG|},
%&=\sum_{\chi\in \HG} \frac{\chi(h)b\otimes \chi}{|\HG|}\\
%&=\Psi(\WT{b}_h)\\
%&=\Psi(m_{1,\WT{u}}(\WT{a}_g)).
\end{align*}
which proves $d_{\HG} \circ \Psi = \Psi \circ d$.

We next show that $\Psi$  intertwines the products.
Suppose $\tilde{u}$
%$\mathbb{CP}^1\setminus\{0,1,\infty\}\to\WT{\Sigma}$ 
is a Floer solution for the product Hamiltonian orbits $\tilde{a}_g$ and $\tilde{b}_h$ contributing to the coefficient of $\tilde{c}_k$ in the product, $g,h,k \in G$. Let $a:=p\circ\tilde{a}_g$, $b:=p\circ\tilde{b}_h$, $c:=p\circ\tilde{c}_k$, and $u:=p\circ\tilde{u}$, as before. Lemma \ref{lem:pathliftendpts} implies
$$g_{\gamma_{u,1}} = g^{-1}k ,\quad g_{\gamma_{u,2}} = h^{-1}k,$$
and hence the one-to-one correspondence 
$$\mathcal{M}(\tilde{c}_k;\tilde{a}_g,\tilde{b}_h) \leftrightarrow \{ u \in \mathcal{M}(c;a,b) : k= g\,g_{\gamma_{u,1}} =  h\,g_{\gamma_{u,2}}\}.$$
We again compute the matrix coefficient for the multiplication:
\begin{align*}
\langle \Psi(\tilde{a}_g) \cdot \Psi(\tilde{b}_h ) , c \otimes \chi_0 \rangle &= \left\langle   \left(\sum_{\chi\in \HG} \frac{\chi(g)a\otimes \chi}{|\HG|} \right) \cdot  \left( \sum_{\chi\in \HG} \frac{\chi(h)b\otimes \chi}{|\HG|} \right), c \otimes \chi_0\right\rangle\\
&= \sum_{\chi_1 \chi_2 = \chi_0 } \sum_{u \in \mathcal{M}(c;a,b)} \frac{\chi_1(g) \chi_2(h) \chi_1(\gamma_{u,1}) \chi_2 (\gamma_{u,2})  }{|\hat{G}|^2}
 \\
&=  \sum_{g',h' \in G} \sum_{\chi_1 \chi_2 = \chi_0 } \sum_{\substack{u \in \mathcal{M}(c;a,b) \\ g_{\gamma_{u,1}}=g', g_{\gamma_{u,2}}=h' } } \frac{\chi_1(g) \chi_2(h) \chi_1(g') \chi_2 (h')  }{|\hat{G}|^2}
 \\
 &=  \sum_{k,g'' \in G} \sum_{\substack{u \in \mathcal{M}(c;a,b) \\ g_{\gamma_{u,1}}=g^{-1}kg'', g_{\gamma_{u,2}}=h^{-1}k } } \sum_{\chi_1 \in \hat{G}} \frac{\chi_0(k) \chi_1(g'')  }{|\hat{G}|^2}
 \end{align*}  
 where in the last line, we made substitutions $g' = g^{-1} k g''$ and $h' = h^{-1}k$ for the obvious later purpose.  Here, the issue is that there is no guarantee if one can make a lift of $u$ having two ends exactly at the given $\tilde{a}_g$ and $\tilde{b}_h$. Notice however that $\sum_{\chi_1 \in \hat{G}} \chi_1 (g'') = 0$ unless $g''$ is the identity. Therefore it is no harm to set $g''=1$ in the above, and obtain
 \begin{align*} 
\langle \Psi(\tilde{a}_g) \cdot \Psi(\tilde{b}_h ) , c \otimes \chi_0 \rangle  &=  
\sum_{k   \in G}  \sum_{\substack{u \in \mathcal{M}(c;a,b) \\ g_{\gamma_{u,1}}=g^{-1}k , g_{\gamma_{u,2}}=h^{-1}k } } \frac{\chi_0(k)  }{|\hat{G}|}    \\
  &=  \sum_{k \in G} \sum_{ u \in \mathcal{M}(\tilde{c}_k;\tilde{a}_g,\tilde{b}_h) } \frac{\chi_0 (k)  }{|\hat{G}|},  
\end{align*}
and it is straightforward to check that this equals $\langle \Psi (\tilde{a}_g \cdot \tilde{b}_h), c \otimes \chi_0 \rangle$.
\end{proof}

\subsection{$\hat{G}$-equivariant Lagrangian Floer theory}\label{subsec:hatgequivlft}

Given a spin, unobstructed compact connected Lagrangian $\bL$ in $B$, one can similarly form a $\hat{G}$-\emph{equivariant} extension of $CF(\bL,\bL)$, analogously to $SC_\HG^* (B)$. 
We work over $\Bbbk=\Lambda$ in the rest of the section for generality, but in the actual application (Section \ref{sec:KSRS}), we will use $\mathbb{C}$-coefficients which perfectly serves our purpose thanks to Lemma \ref{lem:seidelLagoverc}.
As the domain of contributing pseudo-holomorphic maps (which are disks) are homotopically trivial, it is relatively easy to fix the extra data on the moduli space. We will use these date to twist $CF(\bL,\bL)$ with respect to the $\hat{G}$-action on it constructed in the following way. Throughout, we assume $\bL$ is liftable, or equivalently, $\iota_* \pi_1 (L)$ lies inside $p_* \pi_1 (X)$ where we express $\bL$ as an (image of) Lagrangian immersion $\iota: L \to B$. Hence we have a lifting $\tilde{\iota} : L \to X$ of $\iota$.
 
For any Floer generator $Z$, we take a path $\gamma_Z$ from $Z$ to itself, which is an image of a smooth path in $L$. More precisely, we require $(\gamma_Z [0,\epsilon), \gamma_Z (1-\epsilon,1])$ to represent the branch jump associated with $Z$ when $Z$ is an immersed generator supported at a transversal self-intersection point. In particular, $\gamma_Z$ is a loop in $B$, and one can assign an element of $G$ by taking $g_Z:=g_{\gamma_Z}$. Note that the ambiguity of $\gamma_Z$ lies in $\iota_* \pi_1 (L)$ and hence in $p_* \pi_1 (X)$. Therefore $g_{\gamma_Z}$ is well-defined.

If $Z$ is a non-immersed generator, that is, a critical point of a Morse function on $L$, the path $\gamma_Z$ is an image of a loop in $L$ (it can be simply the constant loop), and hence it lifts to a loop in $X$ due to our assumption on $\pi_1 (L)$. If we use other models to define $CF(\bL,\bL)$, then the definition of $\gamma_Z$ should be modified accordingly. For example, if $Z$ is a Hamiltonian chord then, one first takes a path from $Z(0)$ to $Z(1)$ that is an image of a smooth path in $L$, and glue this with the chord $Z$ itself. This produces a loop in $B$ which is homotopic (in $B$) to an image of a loop in $L$ (by choosing a small Hamiltonian if necessary), and hence the lift of $\gamma_Z$ is again a loop in $X$. Therefore it is natural to define $g_Z:=1$ in this case. In summary,

\begin{defn}
For a compact connected Lagrangian immersed $\bL$ given by $\iota: L \to B$, the $G$-labelling on $CF(\bL,\bL)$ is a map $CF(\bL,\bL) \to G$, which is trivial on non-immersed generators, and which  assigns $g_Z:=g_{\gamma_Z}$ to each immersed generator $Z$ where $\gamma_Z$ is a loop with $(\gamma_Z [0,\epsilon), \gamma_Z (1-\epsilon,1])$ representing the branch jump associated with $Z$.
\end{defn}

Once we have the $G$-labelling, the $\hat{G}$-action on $CF(\bL,\bL)$ is given by 
\begin{equation}\label{eqn:hatgonbl}
\chi\cdot Z:=  \chi(g_Z) Z.
\end{equation}

\begin{lemma}
The $\hat{G}$-action on $CF(\bL,\bL)$ defined in \eqref{eqn:hatgonbl} agrees with the one in \cite[Chapter 7]{CHL}.
\end{lemma}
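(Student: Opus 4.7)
The plan is to unravel the identification $CF(\bL,\bL) \cong \bigoplus_{g\in G}CF(\bL_1,\bL_g)$ used in \cite[Chapter 7]{CHL} at the level of generators, and check that the two prescriptions for $\chi\cdot Z$ agree on each summand. Recall that this identification comes from choosing an embedded lift $\bL_1$ of $\bL$ and setting $\bL_g := g\cdot\bL_1$; an intersection point $\tilde p\in \bL_1 \cap \bL_g$ projects to a self-intersection $p=\iota(p_+)=\iota(p_-)$ of $\bL$ in $B$, and this assignment gives a bijection between $\bigsqcup_g(\bL_1\cap\bL_g)$ and the self-intersections of $\bL$ (with two generators per point corresponding to $g$ and $g^{-1}$). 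The Cho--Hong--Lau action is defined by $\rho(\chi)(v_g)=\chi(g^{-1})v_g$ for $v_g\in CF(\bL_1,\bL_g)$, and is extended trivially on the non-immersed (Morse) generators of $CF(\bL,\bL)$ coming from $\bL_1\cap\bL_1$.

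The first step is to show that if $Z$ is an immersed generator supported at $p$ and corresponding to $\bL_1\cap\bL_g$ under the above identification, then $g_Z=g^{-1}$ (or $g$, depending on the orientation convention for the branch jump; the argument is symmetric). To see this, pick the reference path $\gamma_Z$ in $B$ whose two endpoints near $p$ represent the two branches of $\bL$ at $p$ in the order dictated by $Z$. Lift $\gamma_Z$ to a path $\tilde\gamma_Z$ in $X$ starting on $\bL_1$, so that $l(\tilde\gamma_Z(0))=1$. By the construction of the identification, the other endpoint of $\tilde\gamma_Z$ lies on $\bL_g$, so $l(\tilde\gamma_Z(1))=g$ (or $g^{-1}$). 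Lemma~\ref{lem:pathliftendpts} then gives $g_{\gamma_Z}=l(\tilde\gamma_Z(0))^{-1}l(\tilde\gamma_Z(1))=g$ (resp.\ $g^{-1}$). In either convention one obtains $\chi(g_Z)=\chi(g^{-1})$ after accounting for which of the two generators at $p$ is labelled by $g$ versus $g^{-1}$, matching the formula $\rho(\chi)(Z)=\chi(g^{-1})Z$.

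Next, for a non-immersed generator (a critical point of a Morse function on $L$, or a small Hamiltonian chord), $\gamma_Z$ is the image of a loop in $L$; since $\bL$ is liftable, such a loop lifts to a loop in $X$, hence $g_Z=1$ by Lemma~\ref{lem:pathliftendpts}. Under the identification, such generators sit in the $CF(\bL_1,\bL_1)$ summand, where $\rho(\chi)$ acts as $\chi(1)=1$. Thus the two actions coincide on the Morse part as well, and by linearity they agree on all of $CF(\bL,\bL)$.

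The only potentially delicate point is bookkeeping the direction of the branch jump used to define $\gamma_Z$ versus the convention that assigns $\tilde p\in\bL_1\cap\bL_g$ to the generator of degree $|Z|$ as opposed to $|\bar Z|$; this is what determines whether one reads off $g$ or $g^{-1}$. Once the two conventions are fixed consistently (as is done in \cite[Chapter 7]{CHL}), the equality $\chi\cdot Z=\chi(g_Z)Z=\chi(g^{-1})Z=\rho(\chi)(Z)$ is immediate from the computation above.
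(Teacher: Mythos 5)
Your argument follows essentially the same route as the paper: translate the Cho--Hong--Lau action through the decomposition $CF(\bL,\bL)\cong \bigoplus_{g}CF(\bL_1,\bL_g)$, lift $\gamma_Z$ to $X$, and apply Lemma~\ref{lem:pathliftendpts}; both proofs also dispose of the non-immersed generators in the same way. One technical point to tighten: you assert that a lift of $\gamma_Z$ starting on $\bL_1$ has $l(\tilde\gamma_Z(0))=1$ and its endpoint on $\bL_g$ has $l(\tilde\gamma_Z(1))=g$. This conflates two unrelated choices --- the sheet labelling $l$ is determined by the trivialization over $B\setminus\Theta$, whereas $\bL_1$ and its translates $\bL_g$ come from the chosen Lagrangian lift, and there is no reason for $\bL_1$ to sit inside the $l$-sheet labelled by $1$. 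The conclusion you want still holds: since $\tilde\gamma_Z(1)=g\cdot\tilde\gamma_Z(0)$ and $l$ is $G$-equivariant, Lemma~\ref{lem:pathliftendpts} gives $g_{\gamma_Z}=l(\tilde\gamma_Z(0))^{-1}\,g\,l(\tilde\gamma_Z(0))=g$ by abelianity of $G$, independently of the actual value of $l(\tilde\gamma_Z(0))$. The paper phrases this directly as ``the lift of $\gamma_Z$ starting from $h\cdot\tilde\iota(L)$ ends at $g_Z h\cdot\tilde\iota(L)$,'' which avoids invoking $l$ where it is not determined. Your discussion of the $g$ versus $g^{-1}$ convention is consistent with the paper's statement $\chi\cdot Z=\chi(g)Z$ for $Z$ supported on $\bL_h\cap\bL_{gh}$, so that part is fine.
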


\begin{proof}
In both cases, the action of $\hat{G}$ is set to be trivial for non-immersed generators.
Recall that the $\hat{G}$-action in \cite{CHL} is simply by
$$\chi\cdot Z = \chi(g) Z$$
for an immersed generator $Z$ if an inverse image of $Z$ lies in $h  \cdot \tilde{\iota}(L) \cap gh \cdot \tilde{\iota} (L)$ for some $g \in G$ where we fix one arbitrary lifting $\tilde{\iota}(L)$ of $\bL$ to $X$. 
On the other hand, the lifting of $\gamma_Z$ starting from $h \cdot \tilde{\iota}(L)$ ends at $g_Z h \cdot \tilde{\iota}(L)$, which means the corresponding lifting $\WT{Z}$ lies in the intersection of the two. Since $g_Z$ is uniquely determined, it follows that $g= g_Z$.
\end{proof}

We next equip each element $(u:(D^2,\partial D^2) \to (X,\bL),\{z_i\})$ in the moduli space for $CF(\bL,\bL)$ with paths from the input marked point $z_i$ to the output $z_0$, analogously to Lemma \ref{lem:equatorsh}. This can be easily achieved since $D^2$ is contractible. Denote the images of these paths under $u$ by $\gamma_{u,1},\cdots, \gamma_{u,k}$. If the marked point carries a strip-like end, then we additionally assume that the corresponding $\gamma_{u,i}$ follows $\{t=0\}$ while staying in the strip-like end 
%(this will never appear in our actual application as we mostly employ the Morse model). 
Notice that the homotopy types of these paths only depend on the class $\beta$ of the disk, so we may write 
$$\gamma_{\beta,i}:=\gamma_{u,i}$$ 
for $u$ in the class $\beta \in \pi_2 (X, \bL)$.

%By our choice, homotopy types are preserved through degeneration of holomorphic disks (i.e., disk bubbling). More specifically, one may use the popsicle moduli as in \cite{...}, for which geodesics between marked points can serve as $\gamma_{u_i}$.

The rest of the construction is parallel to the previous one. The underlying $\Bbbk$-module is given as
\begin{equation}\label{eqn:cfhatg}
CF_{\hat{G}} (\bL,\bL):=CF(\bL,\bL) \otimes \Bbbk [\hat{G}]
\end{equation}
on which $\hat{G}$ acts as $\chi: Z \otimes \chi'  \mapsto (\chi\cdot Z) \otimes \chi' $. (It also admits an action of $g\in G$ which multiplies $\chi(g)$ to any element in $CF(\bL,\bL) \otimes \chi$.) The $A_\infty$-operations are defined by
\begin{equation}\label{eqn:mkequiv}
m_{k,\HG} (Z_1\otimes \chi_1,\cdots,Z_k \otimes \chi_k)= \sum_{\beta}  \chi_1(\gamma_{\beta,1}) \cdots \chi_k(\gamma_{\beta,k}) m_{k,\beta} (Z_1,\cdots, Z_k) \otimes \chi_1 \cdots \chi_k
\end{equation}
where $\chi_i (\gamma_{\beta,i})=\chi_i (g_{\gamma_{\beta,i}})$, and  $g_{\gamma_{\beta,i}}$ is the $G$-labelling of the path $\gamma_{\beta,i}$. This $m_{k,\HG}$ is obviously $\hat{G}$-equivariant. Note that $CF_{\hat{G}} (\bL,\bL)$ coincides with $CF(\bL,\bL;\hat{G})$ in \ref{subsec:BmLF} as a module, but not as an $A_\infty$-algebra a priori.

\begin{prop}\label{prop:lagcfupdown}
$CF_{\HG}(\bL,\bL)$ with $m_{k,\HG}$ \eqref{eqn:mkequiv} is an $A_\infty$-algebra, which is strictly $\hat{G}$-equivariant. Moreover, we have a $G$-equivariant isomorphism 
$$ f=\{f_1,f_2=f_3=\cdots=0\}: CF(\WT{\bL},\WT{\bL}) \to \left(CF_{\hat{G}} (\bL,\bL) \right)^{\hat{G}}.$$
\end{prop}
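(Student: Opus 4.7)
The plan is to mirror the structure of the proof of Proposition~\ref{prop:isomupdownsc}, now in the Lagrangian (disk) setting, using the path-lifting dictionary (Lemma~\ref{lem:pathliftendpts}) applied to the prescribed interior paths $\gamma_{\beta,i}$ in disk domains.

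First I would verify that $m_{k,\HG}$ from \eqref{eqn:mkequiv} satisfies the $A_\infty$-relations. Since the ordinary $m_k$ downstairs already satisfies them from the usual boundary stratification of the moduli of disks, the only point is how the $\HG$-weights behave under degenerations. Under a stratum degeneration $u\to(u',u'')$ in which a disk in class $\beta$ breaks into two sub-disks sharing an intermediate corner, the paths $\gamma_{\beta,i}$ from each input to the output restrict or concatenate into the paths $\gamma_{\beta',j}$ and $\gamma_{\beta'',j'}$ on the pieces (the disk analogue of what was done in Lemma~\ref{lem:equatorsh}). Because $g_\gamma$ is homotopy-invariant and multiplicative under concatenation, and characters are multiplicative, the $\HG$-weights factor across the two pieces; combined with the ordinary $A_\infty$-relations for $m_k$ this gives those for $m_{k,\HG}$. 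Strict $\HG$-equivariance is then a direct comparison of weights: both sides carry the same path factors $\chi_1(\gamma_{\beta,1})\cdots\chi_k(\gamma_{\beta,k})$, while the branch-jump contribution at the output matches the product of those at the inputs thanks to the closure condition on the boundary of a downstairs disk encoded by the $\Theta$-intersection data.

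Next, in analogy with $\Psi$ of Proposition~\ref{prop:isomupdownsc}, fix a base lift $\tilde{\iota}:L\to X$ of $\iota$ and set $\bL_g:=g\cdot\tilde{\iota}(L)$. Each generator $Z$ of $CF(\bL,\bL)$ has exactly $|G|$ lifts $\WT{Z}_h\in CF(\WT{\bL},\WT{\bL})$ indexed by $h\in G$ (for immersed $Z$ with branch jump $g_Z$, $\WT{Z}_h\in\bL_h\cap\bL_{hg_Z}$; for Morse generators, $\WT{Z}_h$ sits on $\bL_h$). Define
\[ f_1(\WT{Z}_h):=\frac{1}{|\HG|}\sum_{\chi\in\HG}\chi(h)\,(Z\otimes\chi),\qquad f_k=0\text{ for }k\geq 2, \]
and check that the image lies in $(CF_\HG(\bL,\bL))^\HG$. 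Bijectivity reduces to Fourier inversion on $\HG$, with explicit inverse $(Z\otimes\chi)\mapsto\sum_{h\in G}\chi(h^{-1})\WT{Z}_h$. $G$-equivariance for the deck-transformation action follows immediately from $f_1(g'\cdot\WT{Z}_h)=f_1(\WT{Z}_{g'h})=\frac{1}{|\HG|}\sum_\chi\chi(g'h)(Z\otimes\chi)$, which matches the stated $G$-action $g'\cdot(Z\otimes\chi)=\chi(g')(Z\otimes\chi)$ on the right side.

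For each lifted disk $\WT{u}$ contributing to $m_k^{\WT{\bL}}(\WT{Z}_{1,h_1},\ldots,\WT{Z}_{k,h_k})=\WT{Z}_{0,h_0}$, the projection $u=p\circ\WT{u}$ is a disk in class $\beta$ downstairs, and the interior lift of $\gamma_{\beta,i}$ through $\WT{u}$ connects the chosen lifts at its two endpoints; by Lemma~\ref{lem:pathliftendpts} applied to this interior lift, $g_{\gamma_{\beta,i}}=h_i^{-1}h_0$. This yields the one-to-one correspondence
\[ \mathcal{M}(\WT{Z}_{0,h_0};\WT{Z}_{1,h_1},\ldots,\WT{Z}_{k,h_k})\;\longleftrightarrow\;\{u\in\mathcal{M}(Z_0;Z_1,\ldots,Z_k):g_{\gamma_{u,i}}=h_i^{-1}h_0\}, \]
and the verification of the strict $A_\infty$-morphism identity $f_1\circ m_k^{\WT{\bL}}=m_{k,\HG}\circ(f_1,\ldots,f_1)$ reduces to the orthogonality identity $\sum_{\chi\in\HG}\chi(g)=|\HG|\cdot\delta_{g,1}$, exactly as in the matrix-coefficient computation at the end of the proof of Proposition~\ref{prop:isomupdownsc}. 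The principal obstacle is establishing the path-lifting identity $g_{\gamma_{\beta,i}}=h_i^{-1}h_0$ cleanly: one must argue, using the $\bL$-boundary condition together with the sheet-label convention attached to branch jumps, that the interior lift of $\gamma_{\beta,i}$ really terminates at the chosen lifts of the corners, and verify that this convention is consistent with the path-concatenation behavior already used in Step~1 to establish the $A_\infty$-relations.
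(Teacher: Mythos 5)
Your proposal is correct and follows essentially the same route as the paper: the $A_\infty$-relations come from homotopy invariance of the interior paths under disk degeneration, the twisted average is the same map $\Psi$, and the $A_\infty$-morphism identity reduces to character orthogonality combined with the path-lifting dictionary of Lemma~\ref{lem:pathliftendpts}. The paper sidesteps the bookkeeping subtlety you flag at the end by indexing the lifts $\WT{Z}_g$ directly via the sheet label $l$ of the trivialization $p^{-1}(B\setminus\Theta)\cong(B\setminus\Theta)\times G$ (exactly as in Proposition~\ref{prop:isomupdownsc}) rather than by which translate $\bL_h$ the lifted generator lies on, so that Lemma~\ref{lem:pathliftendpts} applies verbatim without reconciling two conventions.
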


\begin{proof}
The first half of the statement directly follows from homotopy invariance of the paths $\gamma_{\beta,i}$ through disk degeneration.  To find an isomorphism $f$, we label generators of $CF(\WT{\bL},\WT{\bL})$ using the $G$-equivariant trivialization $p^{-1}(B \setminus \Theta) \cong B\setminus\Theta \times G$ as in the proof of Proposition \ref{prop:isomupdownsc}. One needs to take $\bL$ in a generic position for  Floer generators to be away from $\Theta$. Thus we have $\WT{Z}_g$ in $CF(\WT{\bL},\WT{\bL})$ for each $g \in G$ projecting to the same $Z$ in $CF(\bL,\bL)$. 

Let us then define $ \Psi: CF(\WT{\bL}, \WT{\bL}) \to \left(CF_{\hat{G}} (\bL,\bL) \right)^{\hat{G}}$ by
$$ \Psi (Z_g) := \frac{1}{|\hat{G}|} \sum \chi(g) Z \otimes \chi$$
which is a module isomorphism by exactly the same reason as in the proof of Proposition \ref{prop:isomupdownsc}. We show that $f=\{f_1:=\Psi,f_2=f_3=\cdots=0\}$ gives an $A_\infty$-algebra homomorphism, which amounts to verifying
$$ m_{k,\HG} \left(\Psi(Z_{1,g_1}), \cdots, \Psi(Z_{k,g_k}) \right) = \Psi \left(m_k (Z_{1,g_1},\cdots, Z_{k,g_k})\right).$$
We first compute the coefficient of $Z_0 \otimes \chi_0$ of the left hand side.
\begin{equation}\label{eqn:psiformk}
\begin{array}{l}
\langle m_{k,\HG} \left(\Psi(Z_{1,g_1}), \cdots, \Psi(Z_{k,g_k}) \right), Z_0\otimes \chi_0 \rangle \\ =  \frac{1}{|\hat{G}|^k}{\displaystyle\sum_\beta}  {\displaystyle\sum_{\chi_1\cdots\chi_k=\chi_0}} \left( \prod_{j=1}^n \chi_j (g_j) \chi_j (\gamma_{\beta,j}) \right) \, m_{k,\beta} (Z_1,\cdots, Z_k) T^{\omega(\beta)} \\
=   {\displaystyle\sum_\beta} \, \chi_0(g_0) m_{k,\beta} (Z_1,\cdots, Z_k) T^{\omega(\beta)}{\displaystyle\sum_{h_1,\cdots,h_{k-1}}} \left( \Pi_{j=1}^{k-1} ( \sum_{\chi \in \hat{G}} \chi (h_j) ) \right)  \end{array}
\end{equation}
where in the last line we arrange the sum in terms of $h_1,\cdots,h_{k-1},g_0$ such that
$$g_{\gamma_{\beta,1}} = g_1^{-1} g_0  h_1, \quad \cdots, \quad g_{\gamma_{\beta,k-1}} = g_{k-1}^{-1} g_0  h_{k-1}, \quad g_{\gamma_{\beta,k}} = g_k^{-1} g_0 ,$$
i.e., we set the new parameter $g_0$ to be $g_0:=  g_k g_{\gamma_{\beta,k}}$, which determines the rest $h_1,\cdots, h_{k-1}$. Since $\sum_{\chi \in \hat{G}} \chi(h_j)=0$ unless $h_j=1$, the only contribution to the last sum in \eqref{eqn:psiformk} comes from $u$ whose class $\beta$ satisfies $g_{\gamma_{\beta,i}}= g_i^{-1} g_0$ for all $i$ and some $g_0 \in G$. Again, as in the proof of Proposition \ref{prop:isomupdownsc}, Lemma \ref{lem:pathliftendpts} tells us that these disks lift to those in $X$ with inputs $Z_{1,g_1},\cdots,Z_{k,g_k}$ and the output $Z_{0,g_0}$. Using this, it is easy to see that the last line of \eqref{eqn:psiformk} equals $\langle \Psi(m_k (Z_{1,g_1},\cdots, Z_{k,g_k}), Z_0 \otimes \chi_0 \rangle$, which finishes the proof.
\end{proof}

A similar argument gives an isomorphism $CF((\WT{\bL},\tilde{b}),(\WT{\bL},\tilde{b})) \to \left(CF_{\hat{G}} ((\bL,b),(\bL,b)) \right)^{\hat{G}}$ where we set $\tilde{b} = \sum_i x_i \sum_{p( \tilde{X}_{i,j} ) = X_i} \tilde{X}_{i,j}$.

\subsection{Example: Seidel Lagrangian in the pair-of-pants}\label{subsec:expopsei} When $X$ is a principal $G$-bundle of $B=P$ a pair-of-pants, and $\bL$ the Seidel Lagrangian (see \ref{subsec:popFloer}), there is a special choice of $\Theta$ whose associated $CF_{\hat{G}} (\bL,\bL)$ agrees with the construction in \cite{CLe} (see \ref{subsec:BmLF}). We present here one particular example of such $\Theta$ which is given as a trivalent graph on the pair-of-pants, but it is also possible to choose $\Theta$ as a union of embedded arcs. 

We take $\Theta$ as follows. It consists of 6 half-infinite edges, whose finite ends are incident to either of two trivalent vertices. See Figure \ref{fig:G-graph}. It is clear that any covering $p:X \to P$ trivializes over the complement of $\Theta$, i.e. $p^{-1} (P \setminus \Theta) \cong P \setminus \Theta \times G$. Each edge is assigned with an element of $G$, as marked in the picture. Notice that around each vertex, the (ordered) product of the group elements attached to incident edges is trivial, and hence one can assign without ambiguity a group element to any generic path in $P$ as in Lemma \ref{lem:pathliftendpts}.

\begin{figure}[h]
\includegraphics[scale=0.5]{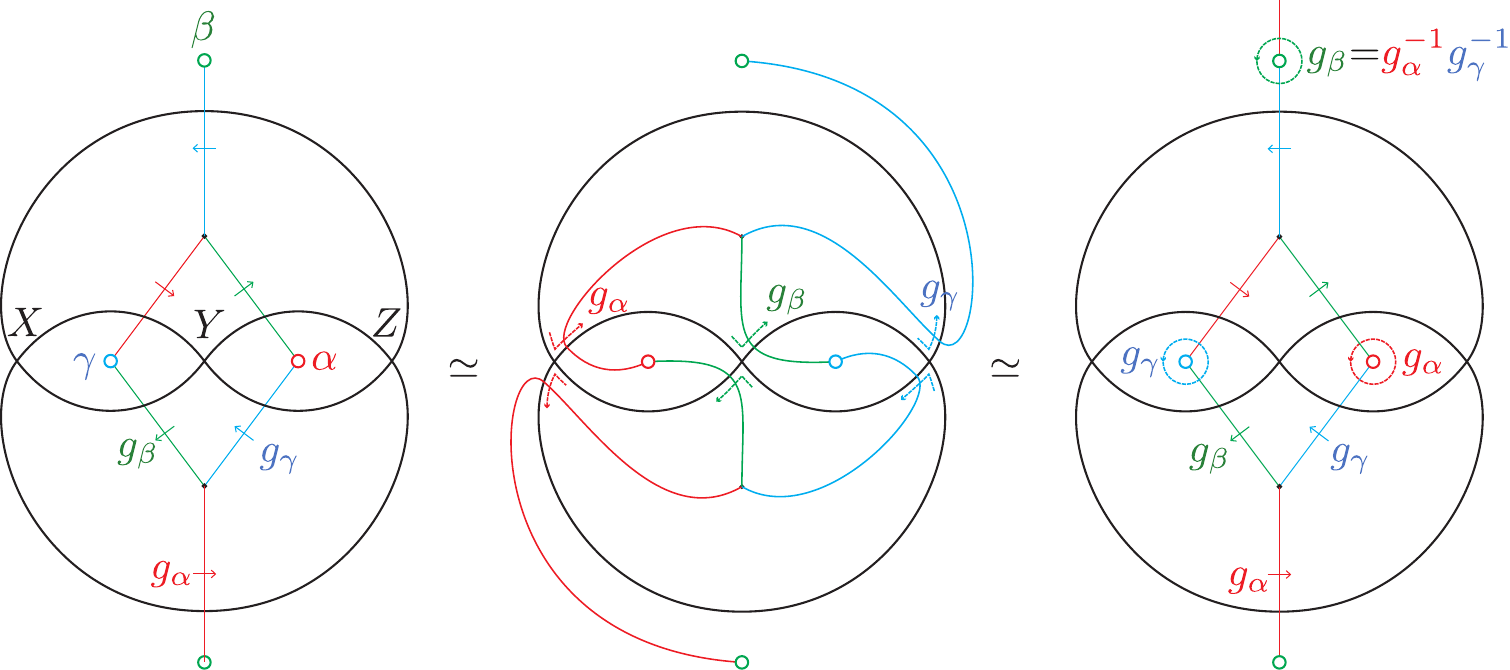}
\caption{$\Theta$ whose associated $CF_{\hat{G}} (\bL,\bL)$ coincides with $CF(\bL,\bL;\hat{G})$ as algebras. (This is valid only for an abelian $G$. One of labelings in the bottom triangle should be replaced by its conjugate when $G$ is not abelian.)}
\label{fig:G-graph}
\end{figure}

Throughout, we use the presentation of $\pi_1 (P)$ given as 
$$\pi_1 (P)=\langle g_\alpha,g_\beta, g_\gamma : g_\alpha g_\beta g_\gamma =1 \rangle.$$ 
One of generators is redundant, but we prefer this symmetric expression which has a clear geometric origin as follows. In fact, the abelian group $G$ (the deck transformation of the abelian covering $X \to P$ in this context) is naturally a quotient of $\pi_1 (P)$, and one can identify $g_\alpha$, $g_\beta$ and  $g_\gamma$ with loops homotopic to $S_\alpha$, $S_\beta$ and $S_\gamma$ in \ref{subsec:SHP} without ambiguity of the choice of a base point. Here, we use the same letters $g_\alpha, g_\beta, g_\gamma$ to denote the projections of the corresponding generators to $G$ (and we will do so from now on, when there is a no danger of confusion).

Furthermore, the Seidel Lagrangian $\bL$ is liftable since, as a loop, its corresponding group element is conjugate to $g_\gamma g_\beta g_\alpha = g_\beta^{-1} g_\alpha^{-1} g_\beta g_\alpha$, and hence should be in the image of $p_* \pi_1 (X)$ since $p$ is an abelian covering.
Therefore our construction produces $CF_{\hat{G}} (\bL,\bL)$ whose $\hat{G}$-invariant part recovers $CF(\WT{\bL},\WT{\bL})$.

\begin{lemma}\label{lem:idbetntwo}
The obvious map 
$$CF_{\hat{G}} (\bL,\bL) \to CF  (\bL,\bL;\hat{G})$$ 
sending $Z \otimes \chi$ to the same expression on the right hand side precisely identifies the $A_\infty$-structure for $\Theta$ we have chosen above.
\end{lemma}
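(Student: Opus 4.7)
My plan is to show that the identity on underlying modules intertwines the two $A_\infty$-structures on $CF(\bL,\bL)\otimes \Bbbk[\hat{G}]$. Both structures use the same pseudo-holomorphic disk moduli and produce outputs carrying the same character $\chi_1 \cdots \chi_k$; the difference lies entirely in the scalar prefactor attached to each contributing disk $\beta$ with inputs $Z_1,\ldots,Z_k$ and output $Z_0$. Expanding $m_{k,\hat{G}}$ via \eqref{eqn:mkequiv} and the semi-direct product $m_k$ via Definition \ref{def:Cho-LeeAinf}, the two prefactors agree for all $\chi_i$ precisely when
\[
\prod_{i=1}^k \chi_i(g_{\gamma_{\beta,i}}) = \prod_{j=2}^k \chi_j\bigl((g_{Z_1} \cdots g_{Z_{j-1}})^{-1}\bigr).
\]
Because characters in $\hat{G}$ separate points of the abelian group $G$, this reduces to the purely group-theoretic identities
\[
g_{\gamma_{\beta,1}} = e, \qquad g_{\gamma_{\beta,i}} = \bigl(g_{Z_1} g_{Z_2} \cdots g_{Z_{i-1}}\bigr)^{-1} \quad (i \geq 2),
\]
which must hold for every contributing $\beta$.

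To verify these identities, I will inspect how the paths $\gamma_{\beta,i}$ interact with the graph $\Theta$ of Figure \ref{fig:G-graph}. Since the image of $\beta$ in $P$ is contractible, I may homotope $\gamma_{\beta,i}$ to run along $\bL$, traveling from $z_i$ through the intermediate corners $z_{i-1}, \ldots, z_1$ and terminating at $z_0$. The defining feature of the chosen $\Theta$ is that its edges cluster in two trivalent arrangements placed so that, in a small neighborhood of each self-intersection $W \in \{X, Y, Z\}$, an arc of $\bL$ swinging from one branch of $W$ to the other across that neighborhood crosses the $\Theta$-graph exactly once, picking up the label $g_W^{-1}$. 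Accumulating these local contributions as the path passes each corner $z_j$ ($j = 1, \ldots, i-1$), and using commutativity of $G$, reproduces the required product $(g_{Z_1} \cdots g_{Z_{i-1}})^{-1}$. The case $i = 1$ is immediate: the path from $z_1$ to the adjacent $z_0$ does not pass any intermediate corner and so crosses no edge of $\Theta$.

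The main obstacle I foresee is the sign and orientation bookkeeping at each trivalent vertex of $\Theta$. One must align the prescribed orientations of the three edges meeting at a vertex with the cyclic orientation of $\partial \beta$ and with the convention of ``going slightly above'' an intermediate corner, so that the local contribution comes out as $g_W^{-1}$ rather than $g_W$. This is also where abelianness of $G$ is essential, as flagged in the caption of Figure \ref{fig:G-graph}: it allows the local contributions picked up at successive corners to be reordered freely into the cumulative product appearing on the right-hand side of the semi-direct product formula.
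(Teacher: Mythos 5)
Your overall strategy matches the paper's: you correctly reduce the claim to a family of purely group-theoretic identities $g_{\gamma_{\beta,i}} = (g_{Z_1}\cdots g_{Z_{i-1}})^{-1}$ (using that characters of an abelian group separate points), and then propose to verify these by homotoping each $\gamma_{\beta,i}$ to the boundary arc of the pearl and tracking intersections with $\Theta$. The paper follows exactly this route. However, there is a genuine gap in the geometric bookkeeping step.

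Your claim that ``an arc of $\bL$ swinging from one branch of $W$ to the other across that neighborhood crosses the $\Theta$-graph exactly once, picking up the label $g_W^{-1}$'' is too coarse. At a transversal self-intersection point $X$ of $\bL$, there are \emph{two} Floer generators $X$ and $\bar{X}$, of opposite parity, and the corresponding branch-jumps cross $\Theta$ with opposite signs. The paper's proof explicitly distinguishes them: a turn at the odd-degree corner of $X$ contributes $g_\alpha$, while a turn at the even-degree corner contributes $g_\alpha^{-1}$ (and similarly for $Y,Z$), which is exactly what makes the local contribution at a corner carrying generator $Z_j$ equal $\chi(Z_j)$ (not a fixed $g_W^{-1}$). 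If you use $g_W^{-1}$ uniformly, the accumulated product comes out wrong whenever an intermediate corner of the disk carries an even-degree generator $\bar{X},\bar{Y},\bar{Z}$ — which happens for typical contributing pearls. So the sign-and-orientation bookkeeping you acknowledge as ``the main obstacle'' is in fact where your argument currently breaks; you need to record, per corner, whether the turn is through the odd- or even-degree sector, and match that with $g_{Z_j}$ (with the sign) rather than with the bare corner label $g_W$.

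A secondary point worth flagging: you should double-check whether your chosen boundary arc (through $z_{i-1},\dots,z_1$) also involves a branch change at the starting corner $z_i$ itself. The paper's statement of the resulting product includes a factor from $Z_i$, and getting this right is part of the same sign/convention bookkeeping; an off-by-one here would manifest as an extraneous or missing factor of $g_{Z_i}^{\pm 1}$ in the final identity.
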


\begin{proof}
Proof directly follows the picture. The middle diagram in Figure \ref{fig:G-graph} shows an isotope of $\Theta$, and it has the following feature. Each time a path $\tau$ in $\bL$ makes the turn at the odd/even-degree corner of $X$ (resp. $Y$ and $Z$), $g_\tau$ takes $g_\alpha$/$g_\alpha^{-1}$ (resp, $g_\beta^{\pm}$,$g_\gamma^{\pm}$) as its factor. Otherwise $g_\tau$ remains trivial since, for instance, if we pass through $X$ without making a turn, then $g_\alpha$ and $g_\alpha^{-1}$ cancel out. Therefore $g_\tau$ is obtained by multiplying of $g_\alpha,g_\beta,g_\gamma$ for each odd-degree $X$,$Y$,$Z$ in $\tau$ and multiplying their inverses for corresponding even corners in $\tau$. Notice its consistency with the $\hat{G}$-action on $CF(\bL,\bL)$:
$$\chi(X) = \chi(g_\alpha), \quad \chi(Y) = \chi(g_\beta), \quad \chi(Z) = \chi(g_\gamma),$$
$$\chi(\bar{X}) = \chi(g_\alpha^{-1}), \quad \chi(\bar{Y}) = \chi(g_\beta^{-1}),\quad \chi(\bar{Z}) = \chi(g_\gamma^{-1}).$$

On the other hand, observe that the path $\gamma_{\beta,i}$ for a disk $u:(D^2,\partial D^2) \to (P,\bL)$ (in the $\beta \in \pi_2(P,\bL)$) is homotopic to the image under $u$ of the arc in $\partial D^2$ running from the $i$-th marked point to the $0$-th marked point. Therefore the factor $\chi_i(\gamma_{\beta,i})$ in $m_k(Z_1 \otimes \chi_1,\cdots, Z_k \otimes \chi_k)$ on the left hand side $CF_{\hat{G}} (\bL,\bL)$ is nothing but the product $\chi_i (Z_1) \cdots \chi_i (Z_i)$, and hence $\prod_i \chi_i (\gamma_{\beta,i})$ recovers the twisting factor on $CF (\bL,\bL;\hat{G})$ appearing in \eqref{twistedainfty}.
\end{proof}

%{\color{red}
%\subsection{Nonabelian G}
%A large part of construction here generalizes to the case of nonabelian group $G$. For this, one needs to reformulate the above in terms of $k[G]$ instead of $k[\hat{G}]$. Notice that there is an obvious loss of information when passing to $\hat{G}=\hom(G,U(1)) = \hom(G/[G,G], U(1))$ for nonabelian $G$. In fact, one can view elements of $\hat{G}$ as characters of $G$ associated with $1$-dimensional representations, and every irreducible representation of $G$ is $1$-dimensional when $G$ is abelian. The group operation on $\hat{G}$ agrees with the convolution product between characters (WRONG).
%
%
%Now suppose we are given an nonabelian group $G$. According to the above discussion, it is natural to consider the algebra generated by irreducible characters of $G$, which now involve higher dimensional ones. Classical representation theory of a finite group tells us:
%
%\begin{prop}
%\end{prop}
%
%See for e.g., \cite{} for the proof.
%}

\section{Orbifold mirror LG models for some punctured Riemann surfaces}\label{sec:KSRS}

In this section, we apply our earlier construction to the class of punctured Riemann surfaces $X$ whose quotient by a finite abelian group is isomorphic to the pair-of-pants $P$. 
 For given such $X$, we prove that the symplectic cohomology of $X$ is naturally isomorphic to the orbifold LG B-model invariants defined in Section \ref{subsec:orbLGB}. The coincidence of the two closed-string invariants can also be shown by calculating both sides directly, for e.g. \cite{GP20} for $SH^*(X)$.
Here, we take more TQFT-oriented approach, as we will relate the two by an enumerative closed-open map, called the Kodaira-Spencer map in the spirit of \cite{FOOO10b}. 

Our strategy is first to establish the mirror symmetry for the pair-of-pants, and work equivariantly to deduce the analogous result for its abelian coverings. We begin with reviewing the construction of Kodaira-Spencer map, which is essentially a closed-open map applying to the single Lagrangian 
boundary.

There is another mirror construction of a general punctured Riemann surfaces by gluing local pieces isomorphic to $W=xyz$ according to some toric data determined by pair-of-pants decomposition of the surface (and its tropicalization). The resulting global mirror takes the form of a LG model on a toric Calabi-Yau 3-fold, which is actually a resolution of the orbifold LG mirror that we have here (for the case of abelian covers of the pair-of-pants.)

\begin{remark}
Any finite abelian group $G= \mathbb{Z}/ m  \times  \mathbb{Z} / n $ can serve as the deck transformation group of $X \to P$. By elementary covering theory, the genus $g$ of $X$ and its number of punctures $N$ in this case are related by
\begin{equation}\label{eqn:abelcovergN}
g= \frac{(m-1)(n-1)-\gcd(m,n) + 1}{2} = \frac{mn - N}{2}, \quad N=m+n+\gcd(m,n).
\end{equation}  
%These two numbers are indeed the orders of deck transformations corresponding to two generators of $\pi_1 (\Sigma))$. 
In particular, there are finitely many possible $N$ for a fixed $g$ if $g \geq 1$.
%, and in fact, one can easily find a lower bound of $N$ for a given $g$ from \eqref{eqn:abelcovergN}. 
%We do not know if there exists an abelian cover of the pair-of-pants for any given $(g,N)$ in \eqref{eqn:abelcovergN}.
\end{remark}

\subsection{Kodaira-Spencer map}\label{subsec:constksmap}
Let $X$ be a punctured Riemann surface with an abelian covering $\pi: X \to P$, and $\tilde{\bL}$ the inverse image of the Seidel Lagrangian $\bL$ in $X$ under $\pi$.
We define one of main objects in our construction, the Kodaira-Spencer map, essentially as the closed-open map in \cite{RS} composed with the projection $HH^* (Fuk(X)) \to HH^* (CF(\tilde{\bL},\tilde{\bL}))$,
\begin{equation}\label{eqn:OCKS}
SC^*(X) \to HH^*(Fuk(X))  \to HH^* (CF(\tilde{\bL},\tilde{\bL}))
\end{equation}
with some modification and reinterpretation of $HH^* (CF(\tilde{\bL},\tilde{\bL}))$ in the sense of Remark \ref{rmk:lblbhoch}. On technical side, one must also address the issue that the our model for Floer theory of $\bL$ does not involve Hamiltonians with higher slopes whereas $SC^\ast(X)$ does. We spell out shortly how to handle this.
The resulting map is also a noncompact analogue of the ring homomorphism appearing in \cite{FOOO10b} with the same name, although our technical setup is slightly different. 

Instead of working directly on $X$, we first pass to the quotient $P$ of $X$ and construct \eqref{eqn:OCKS} for $P$, and recover that of $X$ using the equivariant construction in Section \ref{sec:equivconst}.
For this purpose, let us first spell out the construction of the Kodaira-Spencer map $\mathfrak{ks} : SC^* (P) \to CF((\mathbb{L},b),(\mathbb{L},b))$ for $P$. 
%(Of course, the construction can easily generalize to a more general situation as long as one can fulfill necessary technicalities.) 
As before, we first fix a Hamiltonian $H:P\to\mathbb{R}$ which is a generic $C^2$-small Hamiltonian in $P^{in}$, and is  linear  with a positive slope in the conical end that is smaller than any of positive periods of Reeb orbits occuring in the contact boundary $P$.
Near each orbit, we add a small compactly supported perturbation that breaks $S^1$-symmetry of Hamiltonian orbits.
%, a  perturbation  $H_{\mathrm{pert},\bL}$ that is compactly supported near a small neighborhood of $\bL$, on which it is equal to $-H|_{P^{in}}$.
%% in the complement of $(1-\epsilon, \infty)\times\Sigma$ where $\epsilon>0$ is so chosen that $\bL$ is properly contained in the support of $H_{\mathrm{pert},\bL}$.
%Later, this will force our Floer solutions (for closed-open maps) $J$-holomorphic near the output (from $\bL$). This is compatible with the fact that $CF(\bL,\bL)$ is taken to be a Morse model.

To incorporate the quantum cohomology into the symplectic cohomology (respectively, the compact Fukaya category into the wrapped Fukaya category), \cite[Section 7.1]{RS} uses the following strategy. Let us consider the enlarged chain complex
\begin{equation}\label{eqn:hybridsc}
SC_\Diamond^*(P)=QC^*(P)[\mathbf{q}] \oplus \bigoplus_{w\geq1}CF(wH)[\mathbf{q}]
\end{equation}
The first component $QC^*(P)$ of \eqref{eqn:hybridsc} is the quantum cochains on $P$ generated by (Poincar\'{e} duals of) locally finite cycles. In our situation, we choose its generators to be the unstable manifolds of a proper Morse function on $P$ so that $QC^*(P)$ is simply a Morse complex of this function. The additional piece of differential on $SC_\Diamond^*(P)$ is the PSS map, which identifies $\mathbf{q} QC^\ast (P)$ with contractible orbits in the second component of \eqref{eqn:hybridsc}. This can be thought of as an extension of the continuation map $CF(wH) \to CF((w+1)H)$ when $w=0$.
Therefore its cohomology still computes the same symplectic cohomology $SH^\ast (P)$.

Similarly, one can promote the wrapped Fukaya category $\mathcal{W} (P)$ to some hybrid-type category $\mathcal{W}_\Diamond(P)$ which shares the same objects with $\mathcal{W} (P)$, but when both of objects $L$ and $L'$ are compact, their morphism is enlarged to
\begin{equation}\label{eqn:hybridcf}
\hom_{\mathcal{W}_\Diamond(P)}(L,L') = CF(L,L')[\mathbf{q}] \oplus \bigoplus_{w\geq1}CW(L,L';wH)[\mathbf{q}].
\end{equation}
The first component $CF(L,L')$ of \eqref{eqn:hybridcf} uses a compactly supported Hamiltonian only, and for this reason, we will often call it the \emph{slope 0 component} of $\hom_{\mathcal{W}_\Diamond(P)}(L,L') $. 
The higher slope component $CW(L,L';wH)$ is generated by time-$1$ Hamiltonian chords for $wH$ from $L$ to $L'$ as usual. In our case of interest where $L=L'=\bL$, the slope $0$ component will be taken to be the Morse complex on $\bL$, i.e., $CF(\bL,\bL):=CM(h_\mathbb{L})\oplus \mathbb{C} \langle X,Y,Z,\bar{X},\bar{Y},\bar{Z} \rangle $ for a fixed generic perfect Morse function $h_\bL$ on $\bL$. To be more precise, one imposes Floer datum to each pair $(L,L')$ (as well as a perturbation datum to each domain of pseudo-holomorphic disks that agrees with the Floer datum for $(L,L')$ on each strip-like end), and the above simply means we use the trivial Hamiltonian perturbation datum for the pair $(\bL,\bL)$ when sitting in the slope-0 component of $\hom_{\mathcal{W}_\Diamond(P)}(\bL,\bL)$. We will explain how to address the related transversality issue in Remark \ref{rmk:transv}.

The $A_\infty$-structure on \eqref{eqn:hybridcf} is followed mostly from the standard construction in the telescope model \cite[Section 3]{AS_w}, which counts pseudo-holomorphic disks equipped with a geodesic (called a flavor) joining each input to output. Any geodesic can carry a preferred point that indicates the continuation from a lower slope to higher, and the corresponding input in this case is taken with the factor $\mathbf{q}$. 
In addition, one has a continuation map from $\mathbf{q} CF(L,L')$ into the second component of \eqref{eqn:hybridcf} as a part of the differential. We refer readers to \cite[Section 7.1]{RS} for more details. 

Since continuation maps are quasi-isomorphisms for $L$ and $L'$ compact, the inclusion $ CF(L,L') \to \hom_{\mathcal{W}_\Diamond(P)}(L,L')$
(with no higher maps) defines a quasi-isomorphism, and in particular, we have an $A_\infty$ quasi-isomorphism
$$ CF(\bL,\bL) \to \hom_{\mathcal{W}_\Diamond(P)}(\bL,\bL).$$
Since there are no higher components, it takes our weak bounding cochain $b$ on the left hand side to the same element now living in the slope 0 component on the right hand side. By abuse of notation, we still call this element $b$, and consider the boundary-deformed complex 
$$\hom_{\mathcal{W}_\Diamond(P)}((\bL,b),(\bL,b))$$
whose $m_k$ operations are $b$-deformed as in \eqref{eqn:bdeformedmk}. This is clearly quasi-isomorphic to $CF((\bL,b),(\bL,b))$.

Following \cite[Theorem 7.2]{RS}, we now have a ring homomorphism (after taking  cohomologies)
$$\tilde{\mathfrak{ks}}:SC_\Diamond^*(P) \to \hom_{\mathcal{W}_\Diamond(P)}((\bL,b),(\bL,b)).$$
This can be thought of as the restriction of the closed-open map $CO: SC_\Diamond^*(P) \to HC^\ast (\mathcal{W}_\Diamond(P))$, but uses a different algebraic formalism that is along the same line as \cite{FOOO10b}. Detailed description of associated moduli spaces and their contributions to $\tilde{\mathfrak{ks}}$ will be given shortly.
Composing quasi-isomorphisms, we obtain
$$ \mathfrak{ks}: SC^\ast (P) \to CF((\bL,b),(\bL,b)).$$
Namely, we have 
\begin{equation*}
\xymatrix{SC^\ast (P) \ar[r] \ar[d]_{\mathfrak{ks}}& SC_\Diamond^*(P) \ar[d]^{\tilde{\mathfrak{ks}}} \\
CF((\bL,b), (\bL,b)) \ar[r]& \hom_{\mathcal{W}_\Diamond(P)}((\bL,b),(\bL,b))}
\end{equation*}
where horizontal arrows are quasi-isomorphisms, and the bottom one can be inverted (up to homotopy). In general, a quasi-inverse is given as $\{g_k\}_{k \geq 0}$ possibly with higher components, but we only need $g_1$ since we do not care about the full $A_\infty$-structure, but the ring structures on cohomologies.

Let us give a concrete description on the pseudo-holomorphic disk counts involved in $\tilde{\mathfrak{ks}}$ and $\mathfrak{ks}$. Note that $\tilde{\mathfrak{ks}}$ takes its input from $CF(wH)[\mathbf{q}]$ for some $w \geq 0$ (see Figure \ref{fig:tildeks}). The output will belong to either the slope $w$ or $w+1$ component of $\hom_{\mathcal{W}_\Diamond(P)}((\bL,b),(\bL,b))$ depending on the existence of the continuation.
%
%========
%
%, a set of moduli spaces for defining differentials in this enlargement is described in detail. As the auxiliary summand corresponds to $w=0$ part and our Lagrangian $\bL$ lives in $P^{in}$, we find that the inclusion $CF(\bL,\bL) \hookrightarrow \hom_{\mathcal{W}_\Diamond(P)}(\bL,\bL)$ is a quasi-isomorphism. We invert this to find an $A_\infty$-equivalence $\{g_k\}:\hom_{\mathcal{W}_\Diamond(P)}(\bL,\bL) \to CF(\bL,\bL)$. In this way, the closed-open map sits inside the complex $CF(\bL,\bL)$.
%We then further deform the closed-open map to define the Kodaira-Spencer homomorphism.
For a fixed weight $w \in \mathbb{Z}_{\geq 0}$, we consider the following moduli space of punctured disks $u:D\to P$ consisting of the data:
\begin{enumerate}
\item[(a)] the domain $D$ is a disk with one negative boundary puncture, one positive interior puncture and any number of positive boundary punctures.
\item[(b)] the negative boundary puncture and the interior puncture carry an appropriate end parametrization; biholomorphisms to $(-\infty,0] \times [0,1]$ from a neighborhood of the negative boundary puncture and,  to $[0,\infty) \times S^1$ from a neighborhood of the interior puncture.
% and to $[0,\infty) \times [0,1]$ from a neighborhood of each positive boundary puncture. 
%\item the weight an integer $w\geq1$;
%\item[(c)] On the strip-like end near each positive boundary puncture, we impose an additional perturbation $H_{pert,\bL}$. When the output is taken from the slope $0$ component $CF((\bL,b),(\bL,b))$, we do the same on the strip-like end near the negative boundary puncture. By abuse of notation, we will write $H$ for this additionally perturbed Hamiltonian.
\item[(c)] a fixed choice of a geodesic $\gamma$ from the interior punctures to the negative boundary puncture (see Lemma \ref{lem:equatorsh} and subsequent discussions) where  $\gamma$  may or may not carry a preferred point.
\item[(d)] if $\gamma$ carries a preferred point, set $\beta$ to be a subclosed 1-form that is equal to $0$ near the interior puncture and  positive boundary punctures, but equal to $dt$ near the negative boundary puncture where $t$ is a coordinate along $[0,1]$ on $(-\infty,0]\times[0,1]$. Otherwise, set $\beta \equiv 0$. 
\item[(e)] a closed 1-form $\delta$ on $D$ that is equal to $dt$ near the interior puncture and near the negative boundary puncture.
\item[(f)] $u$ satisfies the perturbed Cauchy-Riemann equation $(du - X_{H} \otimes (w \delta+ \beta ))^{0,1} =0$, and sends boundary to $\bL$.
\item[(g)] on each positive boundary punctures, $u$ asymptotes to one of $X$, $Y$ or $Z$ (taken from the slope 0 component $CF(\bL,\bL)$).
%as $s \to \infty$ where $s$ is a coordinate for $[0,\infty)$ on $[0,\infty)\times[0,1]$. 
This is consistent with the fact that these generators are taken from the slope 0 component $CF(\bL,\bL)$ for which we have $wH=0$ and hence, Floer solutions are $J$-holomorphic near these inputs. 
\item[(h)] $u$ asymptotes to a Hamiltonian orbit $\alpha$ of $wH$ at the interior punctures. When $w=0$, we impose the intersection condition at the interior puncture with the locally finite cycle in $QC^\ast(P)$.
\item[(i)] For $w > 0$, $u$ asymptotes to a Hamiltonian chord of $wH$ at the negative boundary punctures when $\gamma$ carries a preferred point, to that of $(w+1)H$ when $\gamma$ does not. If $w=0$, we require $u$ at the negative puncture to pass through one of unstable manifolds of $h_\bL$. In this case, the corresponding Morse trajectory of $h_\bL$ can limit to the unique critical point of degree 1 (that is, $f_\bL$), or can flow to a constant disk at one of $X,Y$ and $Z$.
%(This is again consistent with the fact about the slope 0 component $CF(\bL,\bL)$ mentioned in (c).)
\end{enumerate}
%These data makes us to define a moduli space $\mathcal{M}^{k}_{CO}(\alpha,Z)$ for $\alpha$ a generator of $SC^*(P)$ and $Z$ a (immersed) Floer generator of $\bL$. 

\begin{figure}[h]
\includegraphics[scale=0.3]{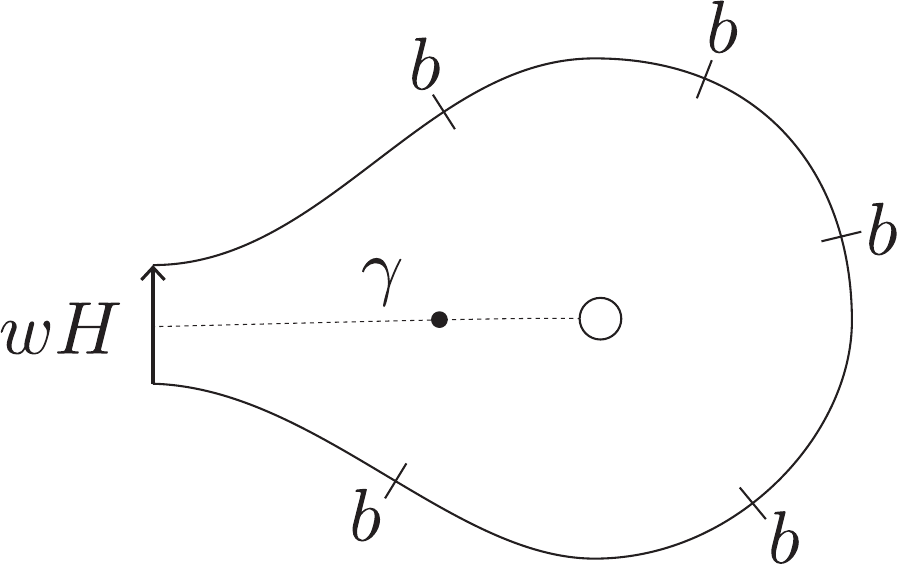}
\caption{a punctured disk counted for $\tilde{\mathfrak{ks}}$}
\label{fig:tildeks}
\end{figure}

For an orbit $\alpha$ in $CF(wH)[\mathbf{q}]$, $\tilde{\mathfrak{ks}}(\alpha)$ is precisely the counting of the isolated element in the above moduli for which the geodesic $\gamma$ does not carry a preferred point. Suppose $w >0$, and there exists a punctured disk in the above moduli which asymptotes to $\alpha$ at the interior puncture, to a chord $Z$ (for $wH$) at the negative boundary puncture, and passes through $X_{i_1},\cdots, X_{i_d}$ at positive boundary punctures in clockwise order (where $X_{i_l} \in \{X,Y,Z\}$). Then it contributes $x_{i_1} x_{i_2} \cdots x_{i_d} Z$ to $\tilde{\mathfrak{ks}}(\alpha)$, and in general $\tilde{\mathfrak{ks}}(\alpha)$ is a linear combination of such.
In particular, the output belongs to the slope $w$ component of $\hom_{\mathcal{W}_\Diamond(P)}((\bL,b),(\bL,b))$). 

 If $w=0$ for the output and the negative boundary puncture is incident to the unstable manifold of $h_\bL$, we do the same with the only difference that the output is either the critical point or taken from the pearl attached to the corresponding trajectory of $h_\bL$ (see (i)). When $w=0$ happens at the interior puncture, we take the input from locally finite cycles in $P$ (see (h)).
For $\mathbf{q} \alpha$, one makes an analogous count for $\tilde{\mathfrak{ks}}( \mathbf{q}  \alpha)$, but with a preferred point on the geodesics (and hence the Floer equation involves nontrivial $\beta$), and the output lives in the slope $(w+1)$ component.
Finally, One can uniquely extend this to the entire complex by requiring $\partial_\mathbf{q} $-equivariance.

\begin{remark}\label{rmk:transv}
Notice that the above moduli is an adaptation of the one in \cite{RS} with the only difference being the usage of the Morse chain model for $CF(\bL,\bL)$ sitting at the slope $0$ component. Observe that the immersed generators are already given as transversal intersections (of two local branches of $\bL$) which do not require a further perturbation (see {\rm(g)} above). Therefore in order to guarantee the transversality of the moduli (for generic perturbation data), it suffices to additionally consider the transversality between the evaluation map from the boundary of a punctured disk and stable/unstable manifolds of $h_\bL$. This can be easily achieved by choosing generic perfect Morse function $h_\bL$ since in our situation $\dim \bL =1$, and hencestable/unstable manifolds are either a point or the entire $\bL$.
\end{remark}

\begin{remark}
Since $b$ is taken from the slope $0$ component for which the Morse model is employed, a priori one may need to also include more complicated pearls (other than those in (i)) that are disks with boundaries on $\bL$ and arbitrary number of $b$ inputs. These can be attached to the punctured disk through gradient trajectories of $h_\bL$. While it is not difficult to enlarge the definition of the above moduli to include pearls as transversal intersection between boundaries of pearls and unstable/stable manifold of $h_\bL$ can be easily achieved for $dim \bL=1$ as mentioned in Remark \ref{rmk:transv}, we do not necessarily need to consider these for $\mathfrak{ks}$ due to the following reason. If the pearl component is nonconstant, the gradient trajectory of $h_\bL$ can be attached to any point in the boundary, so the associated moduli cannot be isolated. On the other hand if it is constant, it must lie at one of $X$, $Y$ or $Z$, and hence involve inputs other than $b$ (such as $\bar{X}, \bar{Y}$ or $\bar{Z}$), since the gradient flow of $h_\bL$ should start away from the branch jump.
\end{remark}

\begin{figure}[h]
\includegraphics[scale=0.55]{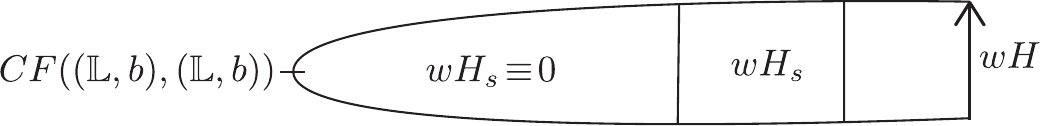}
\caption{The continuation map from nontrivial slope to $CF((\bL,b),(\bL,b))$; the boundary can have arbitrary number of $b$-insertions which are omitted in the picture.}
\label{fig:contiback}
\end{figure}

In order to obtain $\mathfrak{ks}$ from  $\tilde{\mathfrak{ks}}$, one may take any quasi-inverse of the inclusion 
$$CF((\bL,b), (\bL,b)) \to \hom_{\mathcal{W}_\Diamond(P)}((\bL,b),(\bL,b)),$$
which will produce the same map after taking cohomologies. In practice, one can also calculate the image $\mathfrak{ks} (\alpha)$ for an orbit $\alpha \in CF(wH)$ in the following more geometric way. By construction, $\tilde{\mathfrak{ks}} (\alpha)$ lies in the slope $w$ component of $\hom_{\mathcal{W}_\Diamond(P)}((\bL,b),(\bL,b))$, which is quasi-isomorphic to the slope 0 component $CF((\bL,b),(\bL,b)$ through a canonically given (up to homotopy) continuation map as $\bL$ is compact. As we are using the Morse-Bott type definition for $CF((\bL,b),(\bL,b)$, the continuation map should interpolate $wH$ and some Hamiltonian which vanishes near $\bL$. For this,  we introduce the compactly supported perturbation term $H_{pert,\bL}$  that is compactly supported near a small neighborhood of $\bL$, on which it is equal to $-H|_{P^{in}}$. With this, one can define the continuation map from this slope $w$ component to the slope $0$ component $CF((\bL,b),(\bL,b))$ by choosing an interpolation to be $w H_s:=w H + s w  H_{pert,\bL}$ ($-1 \leq s \leq 0$) and imposing it in on some fixed compact region, say $[-1,0] \times [0,1]$, in the Floer strip $(-\infty,\infty) \times [0,1]$ (note that the isotopy occurs on a compact subset of $P$ only). See Figure \ref{fig:contiback} for the illustration of such strips where in the picture we omit $b$-boundary insertions. 

\begin{figure}[h]
\includegraphics[scale=0.3]{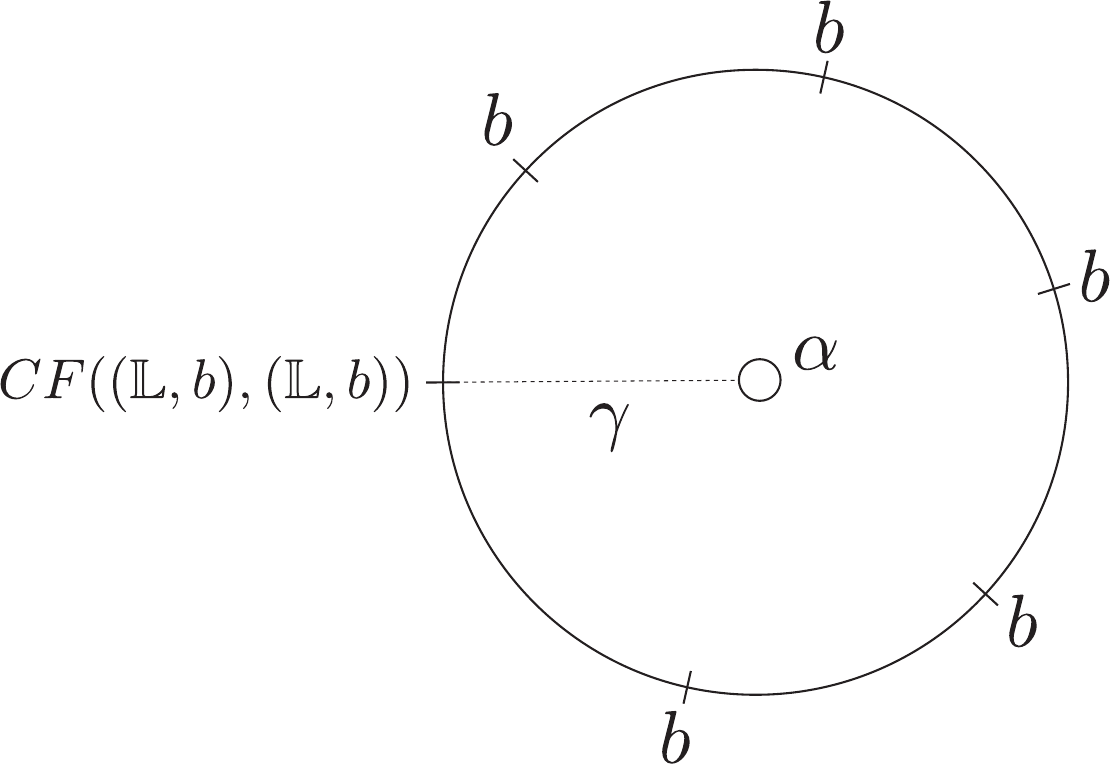}
\caption{The image $\mathfrak{ks} (\alpha) \in CF((\bL,b),(\bL,b))$ of a weight $w$ orbit $\alpha$}
\label{fig:aftergl}
\end{figure}

The standard gluing argument applying to the puncture disks (Figure \ref{fig:tildeks}) and these continuation strips (Figure \ref{fig:contiback}) tells us that in order to determine the image $\mathfrak{ks} (\alpha)$, we can instead count the same punctured disks for $\tilde{\mathfrak{ks}}$ except that the Hamiltonian $wH$ is replaced by $w(H+H_{pert,\bL})$, and hence the negative boundary puncture (the output) is incident to one of generators in $CF(\bL,\bL)= CM(h_\bL) \oplus \mathbb{C} \langle X,Y,Z, \bar{X},\bar{Y},\bar{Z} \rangle$. See Figure \ref{fig:aftergl}. We do not put a preferred point on $\gamma$ in this case, since $\alpha$ is an honest Hamiltonian orbit (without $\mathbf{q}$ attached) in $SC^\ast (P)$. We denote moduli of such punctured disks with the output $Z \in CF((\bL,b),(\bL,b))$ by $\mathcal{M}_{CO} (\alpha, Z)$.

Suppose we are given an abelian covering $X \to P$. Following \ref{subsec:Thetagraph}, we can choose a 1-dimensional submanifold $\Theta$ of $P$ away from which the covering $X \to P$ trivializes. 
(More generally, $\Theta$ can be chosen to be a graph on $P$ as in Remark \ref{rmk:Thetagraph1}.) Recall that each component (or each edge when $\Theta$ is a graph) is assigned with an element of $G$. Using these data, we have constructed $SC_\HG^* (P)$ and $CF_{\hat{G}} ((\bL,b),(\bL,b))$ in \ref{subsec:constequivsc} and \ref{subsec:hatgequivlft} which admit natural $\hat{G}$-actions with associated quotients recovering the corresponding invariants upstairs in $X$.

Having the homomorphism $\mathfrak{ks}:SC^*(P)\to CF((\bL,b),(\bL,b))$,  we want to twist this analogously to operations on  $SC_\HG^* (P)$ and $CF_{\hat{G}} ((\bL,b),(\bL,b))$ (see \eqref{eqn:equivdiff}, \eqref{eqn:equivprod}, \eqref{eqn:mkequiv}) so that it promotes to
\[\mathfrak{ks}_{\hat{G}}:SC_\HG^* (P)\to CF_{\hat{G}} ((\bL,b),(\bL,b)).\]
This can be simply done by assigning the weight $g(\gamma_u)$ to each $u:D \to P$ in the moduli space where $\gamma_u$ is the image of $\gamma$ in $D$ under $u$. Recall that generically $\gamma_u$ intersects finitely many components (or edges) of $\Theta$ that determines the element $g_\gamma$ as in Definition \ref{def:ggamma1}. As before, $g(\gamma_u)$ only depends on the homotopy class of $u$. For an Hamiltonian orbit $\alpha$, we define
\begin{equation}\label{eqn:equiksdef}
\mathfrak{ks}_{\hat{G}} (\alpha \otimes \chi) := \sum_{u \in \mathcal{M}_{CO} (\alpha, Z)}  \chi (g(\gamma_u)) \, Z \otimes \chi.
\end{equation}

%
%\begin{defn}
%We define
%\[\mathfrak{ks}:SC^\ast(\Sigma)\to CF((\bL,b),(\bL,b))\]
%by counting the moduli space $\mathcal{R}^1(a)$ of once punctured pseudo-holomorphic discs.
%\end{defn}

\begin{lemma}\label{lem:ksalgebraic}
The map $\mathfrak{ks}_{\hat{G}}$ is a $G \times \hat{G}$-equivariant chain map that preserves the products, inducing a ring homomorphism $KS_{\HG}:SH_{\HG}^*(P) \to HF_{\HG}((\bL,b),(\bL,b))$.
\end{lemma}

\begin{proof}
%The cobordism argument in \cite{RS} still works with additional consideration on the breaking of the Morse flow line which is already included as a part of $m_1$ on $CF((\bL,b),(\bL,b))$. Hence, 
The proof is the same as that for $\mathfrak{ks}$ (or $\tilde{\mathfrak{ks}}$), which uses standard  cobordism argument as in \cite{RS}. However, we additionally need to check if $\mathfrak{ks}_{\hat{G}}$ is compatible with the twisted structure coming from the paths $\gamma$ from inputs to the output. This is purely a topological aspect, keeping track of (the homotopy classes) these paths under the degeneration. For example, to see $\mathfrak{ks}_{\hat{G}}$ is a chain map, we inspect the boundary behavior of elements in the 1-dimensional moduli $\mathcal{M}_{CO} (\alpha, Z)$. Possible degenerations are described in Figure \ref{fig:ksgbreak}.

\begin{figure}[h]
\includegraphics[scale=0.6]{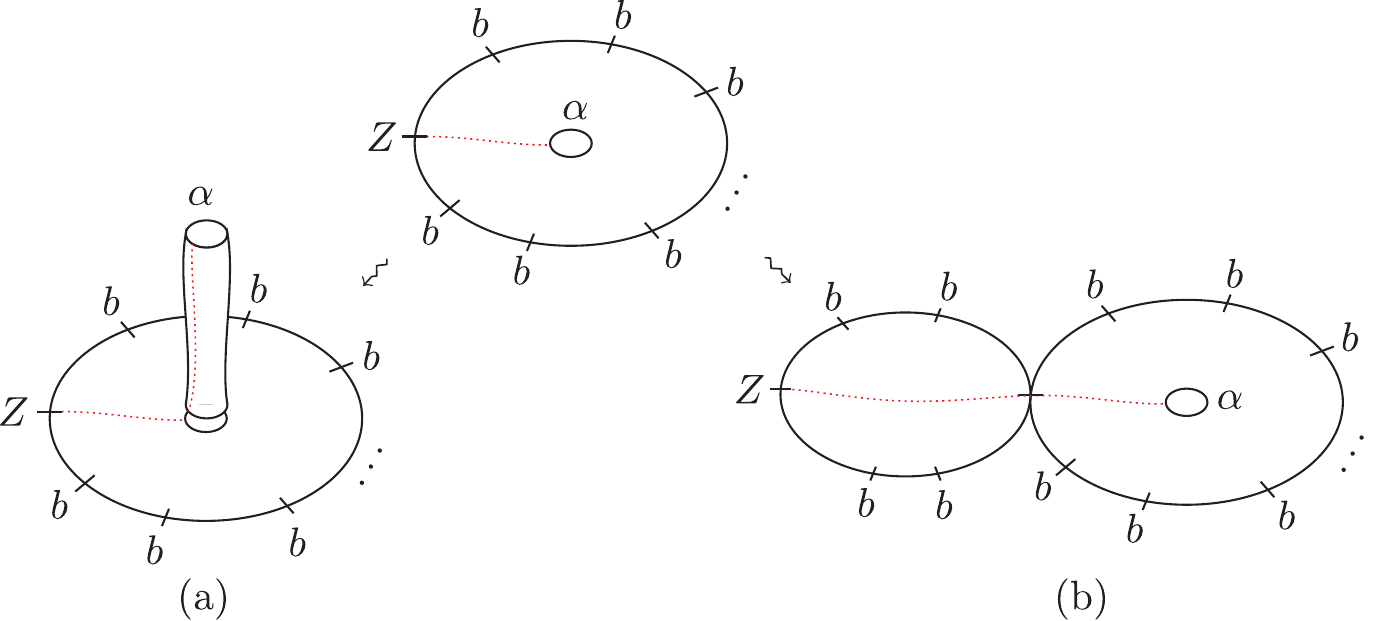}
\caption{Cobordism between $ d_{\hat{G}}  \circ \frak{ks}_{\hat{G}} $ and $\frak{ks}_{\hat{G}} \circ d_{\hat{G}} $ (when counted with weights coming from intersection between dotted paths and $\Theta$)}
\label{fig:ksgbreak}
\end{figure}

As in the proof of Lemma \ref{lem:equatorsh}, one can make a coherent choice of $\gamma_u$ so that it is consistent with other choices of paths $\gamma$ from inputs to output for $SC_\HG^* (P)$ and $CF_{\hat{G}} ((\bL,b),(\bL,b))$. Therefore the count of configurations in (a) of Figure \ref{fig:ksgbreak} weighted by $\xi$ gives 
\begin{equation}\label{eqn:ksd}
\frak{ks}_{\hat{G}} \circ d_{\hat{G}} (\alpha \otimes \chi)
\end{equation}
whereas (b) of Figure \ref{fig:ksgbreak} corresponds to
\begin{equation}\label{eqn:dks}
m_{1,\hat{G}}^{b\otimes1,b\otimes1}  \circ \frak{ks}_{\hat{G}} (\alpha \otimes \chi).
\end{equation}
Notice that if a Floer cylinder $u$ attached with $v_1 \in \mathcal{M}_{CO} (\alpha', Z)$ is cobordant to $v_2  \in \mathcal{M}_{CO} (\alpha, Z)$ with the bubbled-off disk $w$ through the cobordism in Figure \ref{fig:ksgbreak}, then the concatenation $\gamma_u \sharp \gamma_{v_1}$ is homotopic to $\gamma_{v_2} \sharp \gamma_{w}$. Therefore the weights $\xi(g_u)$ in the counts \eqref{eqn:ksd} and \eqref{eqn:dks} must agree.
In general, the $1$-dimensional moduli can have more boundary components due to disk bubbles which do not contribute in our case due to weakly unobstructedness. Similar topological argument proves that $\mathfrak{ks}_{\hat{G}}$ preserves the product structure. 

Recall that the $\hat{G}$ actions on both $SC_\HG^* (P)$ and $CF_{\hat{G}} ((\bL,b),(\bL,b))$ are determined by the liftings of loops associated with generators.
Then the $\hat{G}$-equvariance of $\mathfrak{ks}_{\hat{G}}$ is an easy consequence of the fact that for any $u \in \mathcal{M}_{CO} (\alpha, Z)$, the image (limit) of the interior puncture is homotopic to the image of the boundary. Obviously, $\mathfrak{ks}_{\hat{G}}$ is also $G$-equivariant with respect the $G$-action $ g( \alpha \otimes \chi) = \chi (g) \alpha \otimes \chi$ and $ g( Z \otimes \chi) = \chi (g) Z \otimes \chi$.
\end{proof}

Combined with Proposition \ref{prop:isomupdownsc} and \ref{prop:lagcfupdown}, we obtain a natural map between $SH^*(X)$ and its closed-string $B$-invariant of the orbifold LG mirror, that is, the Koszul cohomology of $W$ relative to $\hat{G}$-action.

 \begin{cor} 
Let $X$ be an abelian cover of $P$, and $\tilde{\bL}$ the inverse image of the Seidel Lagrangian in $X$. Then there exists a natural ring homomorphism $SH^* (X) \to HF((\tilde{\bL},\tilde{b}),(\tilde{\bL},\tilde{b}))$ induced by $KS_{\HG}$.
\end{cor}

The rest of the paper is devoted to show that they are actually isomorphic. Recall that the right hand side can be identified with the orbifold $B$-model invariant of $(W,\hat{G})$ introduced in \ref{subsec:orbLGB} (Lemma \ref{lem:downtoup} and Proposition \ref{prop:floerkos}).

\subsection{Pair-of-pants and its mirror potential $W(x,y,z) = xyz$}\label{subsec:popms}
Let us first look into the pair-of-pants $P$ more closely, which will serve as a building block for subsequent cases. Recall from \ref{subsec:SHP} that the symplectic cohomology ring $SH^* (P)$ admits the following representation
$$ SH^* (P) = \Bbbk \langle e \rangle \oplus \Bbbk \langle f_1 \rangle \oplus \Bbbk \langle f_2 \rangle \oplus  \Bbbk\langle e_\alpha,f_\alpha \rangle [t_\alpha] \oplus \Bbbk \langle e_\beta,f_\beta \rangle [t_\beta] \oplus \Bbbk \langle e_\gamma,f_\gamma \rangle [t_\gamma]$$
with the product structure determined by
$$ f_1 \star e_\alpha= f_\alpha, \quad  e_\alpha \star e_\alpha =e_\alpha, \quad e_\alpha \star f_\alpha = f_\alpha $$
$$ f_1 \star e_\beta= f_2 \star e_\beta = f_\beta, \quad  e_\beta \star e_\beta =e_\beta, \quad e_\beta \star f_\beta = f_\beta $$
$$ f_2 \star e_\gamma= f_\gamma, \quad  e_\gamma \star e_\gamma =e_\gamma, \quad e_\gamma \star f_\gamma = f_\gamma $$
and the products between different $t_a$'s vanish. Geometrically, $e$(serving as the unit), $f_i$ are Morse generators appearing in $P^{in}$ and $e_\alpha t_\alpha^l$ and $f_\alpha t_\alpha^l$ are Hamiltonian oribits around one of the punctures with winding number $l$ (and similar for $\beta$ and $\gamma$).

Let us now consider the LG mirror of $P$.
As before, we take $\bL$ to be the Seidel Lagrangian, whose Maurer-Cartan deformation induces the mirror LG-model $W : \Bbbk^3 \to \Bbbk, W(x,y,z) = xyz$. The coefficient ring $\Bbbk$ will be $\mathbb{C}$ in the mirror symmetry consideration.
%Recall from Lemma \ref{lem:seidelLagoverc} that one may take $\Bbbk = \mathbb{C}$
We take the Koszul complex $(K^* (\partial W), d_W)$ defined in \ref{subsec:orbLGB} (with trivial $H$) as the associated closed-string B-model, in this case, consisting of the following data:
\begin{align*}
K^* (\partial W)&=S_R^\bullet\left(\bigoplus_{t\in\{x,y,z\}}R\cdot\theta_t\right),\\
d_W&= yz\partial_{\theta_x}+zx\partial_{\theta_y}+xy\partial_{\theta_z}.
\end{align*}
where $R=\Bbbk[x,y,z]$ is the function ring of $\Bbbk^3$ and $\theta_a$'s for $a \in\{x,y,z\}$ are formal variables of degree $-1$ as before. 
%$M:=\bigoplus_{t\in\{x,y,z\}}R\cdot\theta_a$ is then a graded $R$-module supported at degree $-1$ so that the symmetric tensor algebra $S_R^\bullet(M)$ is supported on degree $\leq0$. $\partial_a$ for $a \in\{x,y,z\}$ the obvious $R$-derivation of degree $1$ that kills $\theta_a$.
(The exterior degree here is for the convenience of exposition, and we will only use its reduced $\mathbb{Z}/2$-grading, later.)
%
%Because of graded commutativity, we have $\theta_a^2=0$ in $K^\ast (\partial W)$, and hence degree $-2$ component is freely generated by $\theta_x \theta_y, \theta_y \theta_z, \theta_z \theta_x$. 

The degree $-3$ component is $R\cdot\theta_x\theta_y\theta_z$, and it injects into the $-2$-degree component under $d_W$ since $R$ is an integral domain. Therefore its cohomology at degree $-3$ vanishes.
For the cohomology at degree $-2$, suppose $\omega=f_x\theta_y\theta_z+f_y\theta_z\theta_x+f_z\theta_x\theta_y$ is a cocycle, i.e., 
\[d_W(\omega)=x(yf_y-zf_z)\theta_x+y(zf_z-xf_x)\theta_y+z(xf_x-yf_y)\theta_z=0.\]
It leads to $xf_x=yf_y=zf_z$ which we denote by $f$. Since $xyz$ divides $f$, we may write $f=xyz g$.
Then we have $f_x=yz g$, $f_y=zx g$, and $f_z=xy g$ so that 
\[\omega = g(yz\theta_y\theta_z+zx\theta_z\theta_x+xy\theta_x\theta_y)=d_W(g\theta_x\theta_y\theta_z).\]
Thus the $-2$nd cohomology is also zero.

In order to compute the cohomology in degree $-1$, it is elementary to check that $\lambda = g_x\theta_x+g_y\theta_y+g_z\theta_z$ is cocycle if and only if it is a $R$-linear combination of  $\lambda_x = y\theta_y-z\theta_z$, $\lambda_y = z\theta_z-x\theta_x,$ and $\lambda_z = x\theta_x-y\theta_y$. These generators satisfy the single linear relation $\lambda_x+\lambda_y+\lambda_z=0$ (on the chain level). 
%Since $K^{\ast} (\partial W)$ in degree -2 is a free $R$-module generated by $\theta_y\theta_z$, $\theta_z\theta_x$, and $\theta_x\theta_y$, 
On the other hand, the set of coboundaries is spanned by $x\lambda_x$, $y\lambda_y$, and $z\lambda_z$. Therefore the $(-1)$-cohomology is given as
$$H^{-1}(K^* (\partial W))=\dfrac{\{h_x\lambda_x+h_y\lambda_y+h_z\lambda_z : h_x,\,h_y,\,h_z\in R\} }{ \langle x\lambda_x=y\lambda_y=z\lambda_z=0\rangle} = \dfrac{\{h_x\lambda_x+h_y\lambda_y : h_x, h_y \in R \} }{ \langle x\lambda_x=y\lambda_y =0 \rangle}$$
where we used $\lambda_x + \lambda_y + \lambda_z =0$ at the end. 

On the cohomology, we have the following induced relations
\[
\begin{array}{c}
yz\lambda_x=0,\quad zx\lambda_y=0,\quad xy\lambda_z=0,\\[2pt]
x^i\lambda_y=-x^i\lambda_z,\quad y^i\lambda_z=-y^i\lambda_x\quad z^i\lambda_x=-z^i\lambda_y
\end{array}
\]
for $i\geq1$ that completely determine the $R$-module structure on $H^{-1}(K^*(\partial W))$, which descends to $R/\langle yz, zx, xy \rangle$-module structure. We also have the relations
\begin{equation}
\begin{array}{c}
\lambda_x^2=\lambda_y^2=\lambda_z^2=0, \quad
\lambda_y\lambda_z=\lambda_z\lambda_x=\lambda_x\lambda_y=d_W(\theta_x\theta_y\theta_z).
\end{array}
\end{equation}

Finally, because $K^{-1} (\partial W)=\bigoplus_{a\in\{x,y,z\}}R\cdot\theta_a$ whereas $K^0(\partial W)$ is simply $R$, we find that $H^0 (K^* (\partial W)) = R / \langle yz, zx, xy\rangle$.
Consequently, 
\begin{eqnarray*}
Kos(W) = H^* (K^*(\partial W)) & =&H^{0}(K^* (\partial W))\oplus H^{-1}(K^*(\partial W)) \\
& \cong&  R / \langle yz, zx, xy\rangle \oplus \dfrac{\{h_x\lambda_x+h_y\lambda_y : h_x, h_y \in R \} }{ \langle x\lambda_x=y\lambda_y =0 \rangle},
\end{eqnarray*}
and it has a structure of $H^{0}(K^* (\partial W))$-module which is generated by $1_R \in H^0 (K^* (\partial W))$ and by $\lambda_x, \lambda_z \in H^{-1} (K^* (\partial W))$ subject to the algebra relations
\[
\begin{array}{c}
x \lambda_x=z \lambda_z=0, \quad
y \lambda_x=-y \lambda_z, \quad
\lambda_x^2=\lambda_z^2 = \lambda_x\lambda_z = \lambda_z \lambda_x = 0.
\end{array}
\]
%for all $i\geq 1$.
Under the identification $\Theta$ in Proposition \ref{prop:CFKOS}, the above generators correspond to 
$$ 1_R,\quad  \lambda_x, \quad  \lambda_z \qquad \longleftrightarrow  \qquad e_\bL, \quad yY - zZ, \quad xX - yY$$
in $CF((\bL,b),(\bL,b))$.

From the above explicit descriptions, we see that there is an obvious ring isomorphism between $SH^*(P)$ and $Kos(W)$,
%$\simeq CF((\bL,b),(\bL,b))$
 which matches generators of $SH^*(P)$ and those of $Kos(W)=H^{*}(K^* (\partial W))$ via
\[
\begin{array}{c}
e\mapsto 1_R,\quad e_\alpha t_\alpha^i \mapsto x^i,\quad e_\beta t_\beta^i \mapsto y^i,\quad e_\gamma t_\gamma^i\mapsto z^i,\\[3pt]
f_1\mapsto \lambda_z,\quad f_2\mapsto \lambda_x
\end{array}
\]
for all $i\geq1$. Observe that we have chosen generators on both sides in such a way that relations among them precisely match.
The only remaining question is now if the Kodaira-Spencer map $\mathfrak{ks}:SC^*(P) \to CF((\bL,b),(\bL,b))$ composed with the qausi-isomorphism $CF((\bL,b),(\bL,b)) \to K^* (\partial W)$ geometrically realizes this. 
We have $\mathfrak{ks}(e) = e_\bL$ since the closed-open map is unital, and $e_\bL$ again maps to the unit in $K^* (\partial W)$.
Observing that $SH^*(P)$ is (multiplicatively) generated by $e_\alpha t_\alpha$, $e_\beta t_\beta$, $e_\gamma t_\gamma$, $f_1$ and $f_2$, it now suffices to determine their images under $\mathfrak{ks}$.

\begin{lemma}\label{lem:KSpoppop}
 The Kodaira-Spencer map $\mathfrak{ks} : SC^*(P) \to CF((\bL,b),(\bL,b))$ computes
\[
\begin{array}{c}
\mathfrak{ks} (e_\alpha t_\alpha)= x \,e_\bL, \quad \mathfrak{ks} (e_\beta t_\beta)= y \,e_\bL, \quad \mathfrak{ks} (e_\gamma t_\gamma)= z \,e_\bL\\[3pt]
\mathfrak{ks} (f_1)= -xX + yY, \quad \mathfrak{ks} (f_2)= - yY + zZ.
\end{array}
\]
%Therefore, {\color{red}by ,} $\mathfrak{ks}$ is a quasi-isomorphism of differential graded algebras.
\end{lemma}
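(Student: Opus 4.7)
My plan is to reduce the lemma to a finite enumerative computation via grading, and then identify the controlling pearl moduli explicitly. Using the fractional grading from \ref{subsec:SHP} and \ref{subsec:popFloer}, one has $|e_\alpha t_\alpha|=2/3$ and $|f_i|=1$. Since $m_1^{b,b}(e_\bL)=0$, the degree $2/3$ cohomology of $CF((\bL,b),(\bL,b))$ is spanned at this degree by $x\,e_\bL$, $y\,e_\bL$, $z\,e_\bL$. A direct syzygy computation on the differentials recorded in \eqref{eqn:subjectoll} shows that the degree $1$ cohomology is two-dimensional with basis $\{xX-yY,\,yY-zZ\}$. Consequently each $\mathfrak{ks}(e_\bullet t_\bullet)$ is a $\mathbb{C}$-linear combination of $xe_\bL, ye_\bL, ze_\bL$, and each $\mathfrak{ks}(f_i)$ a combination of $xX-yY$ and $yY-zZ$, reducing the problem to pinning down a handful of coefficients.

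For the orbits $e_\bullet t_\bullet$, the relevant rigid moduli consist of pearl trajectories whose main component is a pseudo-holomorphic disk with an interior puncture asymptotic to the given orbit, boundary on $\bL$, one boundary insertion from $b=xX+yY+zZ$, and a Morse gradient flow from the output to $e_\bL$. In the geometry of Figure \ref{fig:seilag}, the puncture $q_\alpha$ lies in a unique complementary region of $\bL$ whose closure meets the corner $X$; I would show that the minimal-area cap of $e_\alpha t_\alpha$ filling this region is the unique rigid element of the moduli, contributing $\pm x\,e_\bL$. The analogous cappings at $q_\beta$ and $q_\gamma$ yield $\pm y\,e_\bL$ and $\pm z\,e_\bL$. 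A standard action-energy bound rules out caps reaching multiple corners or wrapping more than once.

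For the Morse generators $f_1, f_2$, I would place them in $P^{in}$ consistently with \eqref{eqn:logcohommorse1}, so that the unstable manifold of $f_1$ meets only the $q_\alpha$ and $q_\beta$ regions of $P\setminus\bL$, while that of $f_2$ meets only the $q_\beta$ and $q_\gamma$ regions. The pearl moduli then consists of a flow line from $f_i$ landing on a pseudo-holomorphic disk whose single boundary corner is one of $X, Y, Z$ depending on which region the disk occupies. For $f_1$ this yields a chain of the form $aX+bY$ with $|a|=|b|=2/3$; the cocycle condition together with the enumeration forces $(a,b)=(-x,y)$ up to an overall sign, giving $\mathfrak{ks}(f_1)=-xX+yY$. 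The relative sign comes from the spin-structure marker in Figure \ref{fig:seilag}, which flips sign each time the boundary of a disk crosses the marked point. A symmetric analysis for $f_2$ produces $-yY+zZ$.

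The principal obstacle is coherent sign bookkeeping: contributions from the spin structure on $\bL$, orientations of stable and unstable manifolds of the Morse function, and asymptotic markers at the interior puncture must all be tracked consistently. I would cross-check using two consistency tests built into the structure of $\mathfrak{ks}$: unitality (calibrating the absolute sign via $\mathfrak{ks}(e)=e_\bL$) and multiplicativity, so that relations like $f_1\star e_\alpha=f_\alpha$ from \eqref{eqn:logcohommorse1} translate into matching identities among the computed images. Excluding additional rigid pearl configurations beyond those described reduces to an action-energy estimate on pseudo-holomorphic maps in $P$ combined with the topological classification of disk classes in $\pi_2(P,\bL)$.
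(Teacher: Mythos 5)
Your reduction via the fractional grading and the cocycle condition is sound and actually somewhat cleaner than the paper for one step: because $e_\alpha t_\alpha$ is closed in $SC^\ast(P)$ and $\mathfrak{ks}$ is a chain map, $\mathfrak{ks}(e_\alpha t_\alpha)$ must be a degree-$2/3$ cocycle, and a short computation with $m_1^{b,b}(\bar{X})=x(yY-zZ)$ etc.\ forces the coefficients of $\bar{X},\bar{Y},\bar{Z}$ to vanish with no enumeration. The paper instead establishes this vanishing by exhibiting the two punctured disks $u,u'$ of Figure~\ref{fig:cancelpairks} whose $\bar{X}$-contributions cancel by a sign computation; this extra work is not wasted, since the same two-disk picture is exactly what produces the nonzero $\bar{X}$-term in the twisted case (equation~\eqref{eqn:kshatge1}), so your bypass, while valid for this lemma, misses a structural point reused later. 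Your $c_{f_\bL}=0$ from the cocycle condition is likewise a genuine shortcut compared to the paper's count of the two cascades through $r$ and $s$.

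The genuine gap is in the determination of the surviving coefficients, i.e.\ that $c_x=-1$, $c_y=1$, $c_z=0$ for $\mathfrak{ks}(f_1)$ (and the analogous $c_x=1$ for $\mathfrak{ks}(e_\alpha t_\alpha)$). You leave these "up to an overall sign" and propose to pin them down by the spin marker together with a multiplicativity cross-check. The spin marker only enters as a relative sign between the two $\bar{X}$-contributions, which is already subsumed by your cocycle argument, and does not fix the absolute sign. The multiplicativity cross-check is not available at chain level: the paper explicitly notes at the end of \ref{subsec:popms} that, e.g., $f_1\cdot f_2=0$ in $SC^\ast(P)$ whereas $m_2(\mathfrak{ks}(f_1),\mathfrak{ks}(f_2))=m_1^{b,b}(f_\bL)\neq 0$, so $\mathfrak{ks}$ is multiplicative only on cohomology; a cohomology-level comparison is insensitive to the overall normalization of each generator. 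The paper resolves this by a different idea you would not have guessed from the statement alone: passing to a $\mathbb{Z}/n$-cover, choosing a nontrivial $\chi\in\hat{G}$, computing $d_{\hat{G}}(e\otimes\chi)$ and forcing $\mathfrak{ks}_{\hat{G}}$ to be a chain map, which algebraically outputs $c_x=-1,c_y=1,c_z=0$ including the sign. Without either this equivariant trick or a careful orientation/sign analysis of the contributing Morse cascades and constant pearls (choosing an explicit $H$ as in Figure~\ref{fig:hforpop} and checking that the $q_\gamma$-region is indeed unreachable from $f_1$'s unstable manifold), your argument does not close. You should also note that your claim of a \emph{unique} rigid cap for $e_\alpha t_\alpha$ is false as stated: the moduli with output $\bar{X}$ contains two elements, and it is the moduli with output (flowing to) $e_\bL$ that is a single point; the paper's SFT-stretching argument (\cite[5.2]{Ton}, \cite[Prop.~9.5]{CCJ}) is what justifies existence and rigidity of these disks, which you assert but do not prove.
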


\begin{proof}
By obvious symmetry, it suffices to deal with the case for $\alpha$.
We first show $\mathfrak{ks} (e_\alpha t_\alpha)= x e_\bL$. Note that the geometric representative (the Hamiltonian orbit) for $e_\alpha t_\alpha$ is topologically a simple closed curve round the puncture $\alpha$. It is easy to find a pair of topological (punctured) disks $u$ and $u'$ whose punctured boundary in the interior wrap $\alpha$ exactly once and whose boundary circles hit the corner $X$. (Figure \ref{fig:cancelpairks})

\begin{figure}[h]
\includegraphics[scale=0.5]{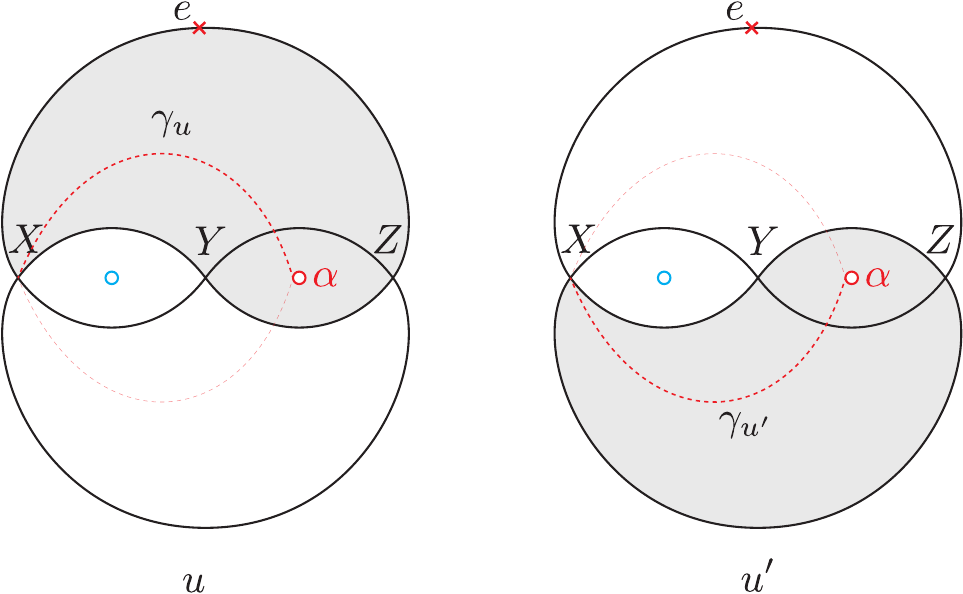}
\caption{Topological punctured disks with boundaries on $\bL$ and asymptotic to $e_\alpha t_\alpha$ at the puncture.}
\label{fig:cancelpairks}
\end{figure}

In order to see that these regions $u$ and $u'$ are indeed parametrized as solutions to our perturbed Cauchy-Riemann equation, we use the domain-stretching argument in \cite[5.2]{Ton}. Specifically, in our setting, the domain-stretching technique is applied to $(\mathbb{C}P^1,D=p_1+p_2+p_3)$ where the points $p_1,p_2,p_3$ are such that $P=\mathbb{C}P^1\setminus\{p_1,p_2,p_3\}$. If $p_1$ denotes the point that fills up the puncture $\alpha$, there are two $J$-holomorphic disks $\tilde{u},\tilde{u}':(\mathbb{D}^2,\partial\mathbb{D}^2)\to(\mathbb{C}P^1,L)$ of Maslov index 2 intersecting with $D$ only once at $p_1$ each of whose corner is incident to $X$. Topologically, $\tilde{u}$ and $\tilde{u}'$ compactify $u$ and $u'$ respectively. 

Let us now consider a sequence of domain disks
$$D_n = A_n \cup B, \quad n \in \mathbb{Z}_{>0}$$
where $A_n = S^1 \times [0,n]$ and $B$ is a unit disk capping the boundary component $S^1 \times \{n\}$ of $A_n$. As $n$ increases, $D_n$ stretches the domain $D$ (the unit disk) of $\tilde{u}$. One imposes the $s$-shaped Hamiltonian $H$ on $D_n$ which is linear with a fixed slope $w(\geq 1)$ near $S^1 \times \{n\}$ and vanishes near $S^1 \times \{0\}$ and $0 \in B$. By definition (see \cite[4.6]{Ton}), the $s$-shape $H$ becomes constant again for large $R$ in contrast to the Hamiltonian in our setup which is linear in $R$ with a nontrivial slope for all $R \gg 1$. Therefore the $s$-shape $H$ can have additional nonconstant orbits on the region where $H"(R) <0$, which are called the type III orbit in \cite{Ton}. Nonconstant orbits occurring on $H"(R)>0$ are called type II, and these are precisely the orbits considered in our geometric setup.

For each finite $n$, consider the solutions to the Floer equation with the above choice of a $s$-shaped Hamiltonian that pass through $X$ and through $p_1$ once at the interior. The count of such solutions remain unchanged as $n$ varies since one can homotope $H$ back to $0$ (note  that we work on $\mathbb{C}P^1$ which is compact). 
On the other hand, as $n \to \infty$, $D_n$ breaks into two pieces,
\begin{itemize}
\item[-] (a)-curve: a punctured disk with boundary condition identical to that of $\tilde{u}$ and the asymptotic condition to an Hamiltonian orbit $\gamma$ at the puncture
\item[-] (b)-curve: a disk with boundary equal to $\gamma$ and passing through $p_1$ once in the interior.
\end{itemize}
See the left of Figure \ref{fig:neckstr}.
We want to prove that $\gamma$ is the Hamiltonian orbit $e_\alpha t_\alpha$ that winds around $p_1$ exactly once. According to the discussion above, it suffices to show that $\gamma$ is of type II (since, once this is the case, its winding number is determined by a topological consideration).

Tonkonog classified the possible types of the orbit $\gamma$ into I, II, III, IVa, IVb. Orbits other than type II and III are constants, and if they existed, we would have a sphere satisfying the (perturbed) Floer equation in $\mathbb{C}P^1$ and not intersecting $p_2, p_3$ which is a contradiction. Finally, type III orbits can be excluded using some generalized version of maximum principle \cite[5.7]{Ton} and the fact that type III orbits have the opposite orientations compared to type II. Therefore we are only left with the type II orbit appearing as $\gamma$ which is  desired. Since the count of (b)-curves is $1$ in this case, we conclude that the number of punctured disks of topological type $u$ is precisely equals to the number of Maslov $2$ disk passing through $p_1$.

 We learned the above argument from \cite[Theorem 5.4]{CCJ}.
Alternatively, one can also apply more traditional neck-stretching argument (that degenerates the almost complex structure near some suitably chosen circle $C$ enclosing $\alpha$) to split the disk $u$ into $u_1$ and $u_2$, and compactify each using the finiteness of the energy (see the right of Figure \ref{fig:neckstr}). $u_1$ becomes a Maslov $2$ disk bounding $\bL$ in the compactification of $P$ whereas  $u_2$ is a pseudo-holomorphic cylinder appearing in the calculation of $d$ on $SC^*(\mathbb{C})$ (the degree of $\alpha$ as an orbit in $\mathbb{C}$ is odd). It is well-known that their counts are both $1$.

\begin{figure}[h]
\includegraphics[scale=0.5]{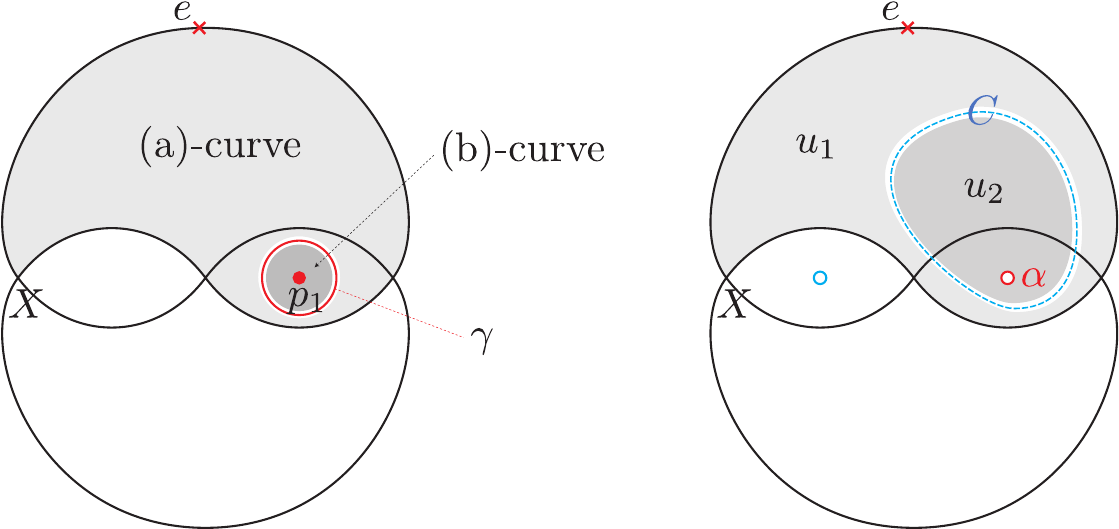}
\caption{Domain stretching and SFT-stretching}
\label{fig:neckstr}
\end{figure}

Clearly $u$ contributes $x e_\bL$ to $\mathfrak{ks} (e_\alpha t_\alpha)$ with coefficient precisely $1$, since so is the corresponding Malsov $2$ disk in $\mathbb{C} P^1$. On the other hand, both $u$ and $u'$ can be viewed as punctured disks with the output $\bar{X}$. Calculating signs carefully, these contributions cancel each other. Factors contributing to their signs appear in \cite[Lemma C.4]{Ab-gen} in details, and in fact, the nontrivial spin structure put on $\bL$ is solely responsible for their opposite signs in our case. This is no longer the case if we twist operations with weights induced from $\gamma_u$ and $\gamma_{u'}$ in Figure \ref{fig:cancelpairks}, in the presence of a nontrivial $\hat{G}$-action.

We next compute $\mathfrak{ks} (f_1)$. By degree reason (in terms of fractional grading) and by $G$-equivariance of $\mathfrak{ks}$,  $\mathfrak{ks} (f_1)$ must be a linear combination of $xX$, $yY$, $zZ$ and $f_{\bL}$. Comparing the actions, we see that $\mathfrak{ks} (f_1)$ can have contributions only from Morse trajectories and constant disks. In terms of the punctured pseudo-holomorphic disks given in \ref{subsec:constksmap}, this is simply a constant disk which intersects a locally finite cycle (the unstable manifold for $f_1$) at the interior puncture, and also passes through the unstable manifold of the Morse function $h_\bL$  at the negative boundary puncture, where the corresponding trajectory $h_\bL$ can flow to a critical point or a constant disk.
Suppose $\mathfrak{ks} (f_1)$ is given as
\[ \mathfrak{ks} (f_1) = c_x xX + c_y yY + c_z zZ + c_{f_\bL} f_{\bL}\]
for some coefficients $c_x,c_y,c_z$ and $c_{f_\bL}$ in $\mathbb{C}$. Firstly, it is immediate to see there are exactly two \emph{cascades} from $f_1$ to $f_\bL$. They are the concatenated paths consisting of Morse trajectories of the Morse function on $\bL$ and those of the ambient Morse function joined at $r$ and $s$ respectively (or the constant disks intersecting locally finite cycles and unstable manifolds of $h_\bL$ at the interior and the boundary, respectively). See (b) of Figure \ref{fig:morsetrajectories}. 
They have the opposite signs due to the intersection parities of two trajectories at $r$ and $s$. Thus we have $c_{f_{\bL}} = 0$.

% By counting broken Morse trajectories, we find that $c_{f_{\bL}} = 0$. 
To find the remaining coefficients $c_x,c_y,c_z$, we argue as follows. 
 Choose an abelian quotient $G$ of $\pi_1 (P)$ of the shape $G= \langle g_\alpha, g_\beta,g_\gamma : g_\alpha^n = g_\alpha g_\beta = g_\gamma =1 \rangle \cong \mathbb{Z}/n$, and consider the corresponding principal $G$-bundle $X \to P$. With respect to $\Theta$ chosen in Figure \ref{fig:morsetrajectories} (a) and a nontrivial $\chi \in \hat{G}$, we compute the differential on $SC^* (P) \otimes \chi (\subset SC^*_\HG(P))$ and $CF((\bL,b),(\bL,b)) \otimes \chi (\subset CF_{\HG}((\bL,b),(\bL,b)))$. The coefficients $c_x,c_y,c_z,$ will be determined by comparison. 
Observe first that  
\[d_{\HG} (e \otimes \chi) = \chi(g_\beta) (1- \chi(g_\alpha )) f_1 \otimes \chi = \left(\chi(g_\alpha^{-1})-1\right) f_1 \otimes \chi\]
by looking at the intersections of two isolated Morse trajectory from $e$ to $f_1$ (for the ambient Morse function) with $\Theta$. Taking $\mathfrak{ks}_{\HG}$ on both sides yields
\begin{equation}\label{eqn:ksdsc}
\mathfrak{ks}_{\HG} \left( d_{\HG} (e \otimes \chi) \right) = \left(\chi(g_\alpha^{-1})-1\right)\mathfrak{ks}_{\HG} ( f_1 \otimes \chi ).
\end{equation}
Since $\mathfrak{ks}_{\hat{G}}$ is a chain map, the left hand side of \eqref{eqn:ksdsc} becomes
\[
\begin{split}
\mathfrak{ks}_{\HG} \left( d_{\HG} (e \otimes \chi) \right) &= m_{1,\HG}^{b \otimes 1,b \otimes 1} \left( \mathfrak{ks}_{\HG} (e \otimes \chi) \right)\\
&=m_{1,\HG}^{b \otimes 1,b \otimes 1} \left( \chi(g_\alpha^{-1}) e_\bL \otimes \chi \right)\\
&= \chi(g_\alpha^{-1}) \left( \left(1-\chi(g_\alpha^{-1})\right) xX \otimes \chi + \left(1-\chi(g_\alpha)\right) yY \otimes \chi \right)\\
&= \left(1-\chi(g_\alpha^{-1})\right)\left( \chi(g_\alpha^{-1})xX \otimes \chi -  yY \otimes \chi\right)
\end{split}
\]
where we used $\mathfrak{ks}_{\HG} (e \otimes \chi)=  \chi(g_\alpha^{-1}) e_\bL \otimes \chi$ in the middle, which can be easily seen from the intersection of the ambient Morse trajectory from $e$ to $e_\bL$ with $\Theta$.

We now compute the right hand side of \eqref{eqn:ksdsc}, $\mathfrak{ks}_{\HG} ( f_1 \otimes \chi )$. By definition, it counts exactly the same pearl trajectories as those for $\mathfrak{ks} ( f_1 )$, but with weights coming from the intersection with $\Theta$.
%
% $\mathfrak{ks}_\chi ( f_1 \otimes \chi)$ admits energy zero contributions. 
Notice that in either cases, contributing trajectories must lie inside the colored Morse trajectories in Figure \ref{fig:morsetrajectories} (b), considering the locations of input and outputs. Keeping track of intersection patterns of these two with $\Theta$, we can compare easily the difference between $\mathfrak{ks}  ( f_1)$ and $\mathfrak{ks}_{\HG} ( f_1 \otimes \chi)$, and it results in
\[\mathfrak{ks}_{\HG} ( f_1 \otimes \chi) = \chi(g_\alpha^{-1})c_x xX \otimes \chi+ c_y yY \otimes \chi +c_z zZ \otimes \chi.\]
Equating this with $-\chi(g_\alpha^{-1})xX \otimes \chi +  yY \otimes \chi$ from the left hand side, we have $c_x =-1$, $c_y = 1$ and $c_z = 0$.
\end{proof}

Consequently, we see that $\mathfrak{ks}$ is a quasi-isomorphism:
%This implies the ring isomorphism between 
%$$SH(P) \to HF((\bL,b),(\bL,b)):= H^\ast ( CF((\bL,b),(\bL,b)), m_1^{b,b}),$$ 
%and 
\begin{cor}\label{cor:basecase}
The map $\mathfrak{ks}$ induces a ring isomorphism  $KS:SH^*(P) \to HF((\bL,b),(\bL,b))$ on the cohomology-level, and hence that between $SH^*(P)$ and $Kos(W)$ due to Proposition \ref{prop:floerkos} or Corollary \ref{cor:cfkostau} (with $H=1$).
\end{cor}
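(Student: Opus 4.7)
The plan is to leverage Lemma \ref{lem:KSpoppop} together with multiplicativity of $\mathfrak{ks}$ to pin down the image on all generators of $SH(P)$, then match the result with the explicit isomorphism $SH(P)\to Kos(W)$ spelled out above. First, since $\mathfrak{ks}$ is a unital dga homomorphism (the non-equivariant case of the preceding lemma, i.e., $G=\{1\}$), iterating the product gives $\mathfrak{ks}(e_\alpha t_\alpha^i)=x^i\, e_\bL$ and similarly for $\beta,\gamma$, because $(xe_\bL)^i = x^i e_\bL$ holds in $HF((\bL,b),(\bL,b))$. Under $\eta$ these land on $x^i, y^i, z^i \in R/\langle xy,yz,zx\rangle \cong H^0(K^\ast(\partial W))$, matching the abstractly described isomorphism on the degree-zero subring.

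Next, the computation $\mathfrak{ks}(f_1)=-xX+yY$ and $\mathfrak{ks}(f_2)=-yY+zZ$ descends under $\eta$ to $-\lambda_z$ and $-\lambda_x$ in $H^{-1}(K^\ast(\partial W))$ (the sign is harmless). Together with $\mathfrak{ks}(e)=e_\bL\mapsto 1_R$, this shows that $\mathfrak{ks}$ sends the chosen multiplicative generators of $SH(P)$ bijectively onto the chosen generators of $Kos(W)$. To upgrade this to a ring isomorphism, I would check that the defining relations on the $SH(P)$-side (for example $f_1\star e_\alpha = f_\alpha$, $f_2 \star e_\gamma = f_\gamma$, $e_\alpha \star e_\beta = 0$, etc., from \eqref{eqn:logcohommorse1}) are transported to the corresponding relations on the $Kos(W)$-side (such as $y\lambda_x=-y\lambda_z$, $x\lambda_x=0$, $\lambda_x\lambda_z=0$, and the vanishing of cross products between different coordinate axes). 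Since $\mathfrak{ks}$ is multiplicative, each such relation on the left automatically produces the corresponding relation on the right when the images of the generators are substituted, and one verifies directly that the induced identities are exactly the ones generating the relation ideal for $Kos(W)$ described in \ref{subsec:popms}.

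From here surjectivity is clear because the image contains a generating set of $Kos(W)$ as an algebra. For injectivity I would argue degree-by-degree using the explicit bases: in each fractional degree both $SH(P)$ and $Kos(W)$ have matching finite-dimensional or finitely generated $\mathbb{C}$-module descriptions, and the map respects this decomposition by $G$-equivariance and fractional grading considerations already invoked in the proof of Lemma \ref{lem:KSpoppop}, so a map that sends a basis to a basis must be an isomorphism. Finally, by Proposition \ref{prop:floerkos} with $H=1$, the cohomology of $CF((\bL,b),(\bL,b))$ is identified with $Kos(W)$, which produces the claimed ring isomorphism.

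The main obstacle I anticipate is bookkeeping rather than any deep input: one has to check that the relation $y\lambda_x + y\lambda_z=0$ in $Kos(W)$ is indeed forced by the Floer-theoretic products (via $m_2^{b,b,b}$) applied to $\mathfrak{ks}(f_1)$ and $\mathfrak{ks}(f_2)$, and that no spurious extra relations appear on either side. This is purely combinatorial given the explicit descriptions, but is the one place where signs (in particular the one coming from the nontrivial spin structure on $\bL$ that was decisive in Lemma \ref{lem:KSpoppop}) and the commuting of $\eta$ with the product structure need to be carefully tracked.
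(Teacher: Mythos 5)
Your proposal is correct and follows essentially the same strategy as the paper's proof: use Lemma \ref{lem:KSpoppop} and multiplicativity of $\mathfrak{ks}$ to compute images of the multiplicative generators $e_\alpha t_\alpha, e_\beta t_\beta, e_\gamma t_\gamma, f_1, f_2$, observe that these land (up to harmless sign) on the module generators $1_R, x, y, z, \lambda_z, \lambda_x$ of $Kos(W)$ over $\mathbb{C}[x,y,z]/\langle xy,yz,zx\rangle$, and then check that the relations \eqref{eqn:logcohommorse1} and \eqref{eqn:subjectoll} match. The paper phrases this slightly more compactly by treating both sides as finitely generated modules over the common degree-zero ring $\mathbb{C}[x,y,z]/\langle xy,yz,zx\rangle$ and identifying module generators and relations, but the content is the same as your generator-by-generator and relation-by-relation comparison.
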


\begin{proof}
Recall that both $SH^0(P)$ and $HF^0 ((\bL,b),(\bL,b))$ are isomorphic to the function ring of the union of three coordinate axes in $\mathbb{C}^3$. Both $SH^*(P)$ and $HF((\bL,b),(\bL,b))$ can be regarded as finitely generated modules over $\mathbb{C}[x,y,z]/\langle xy,yz,zx\rangle$, and Lemma \ref{lem:KSpoppop} identifies the module generators (see \eqref{eqn:logcohommorse} and \eqref{eqn:addgen}). From \eqref{eqn:logcohommorse1} and \eqref{eqn:subjectoll}, product relations among these generators also match.
\end{proof}
 
It is worthwhile to mention that $\mathfrak{ks}$ intertwines algebraic and geometric relations in $SH^*(P)$ and $HF((\bL,b),(\bL,b))$. For example, we have $f_1 \cdot f_2 = 0$ on the chain level (in $SC^* (P)$) since there is no contributing holomorphic curves, but there is a geometric relation $m_2 (xX - yY, zZ - xX) = m_1^{b,b} (f_\bL)$ from disk counting on the other side. Thus $m_2 (\mathfrak{ks}(f_1),\mathfrak{ks}(f_2))=0$ holds only on the cohomology.

\subsection{Abelian covers of pair-of-pants and orbifold LG mirror}
We now establish the closed string mirror symmetry for an abelian cover $X$ of $P=X/G$ based on the quasi-isomorphism $\mathfrak{ks}$, or more precisely, by \emph{orbifolding} $\mathfrak{ks}$ with respect to $\hat{G}$-action. Throughout, we work with the geometric setup given in (a) of Figure \ref{fig:morsetrajectories}. In particular, relative positions of $\Theta$ and Floer generators are fixed. Notice that $\Theta$ here is a Borel-Moore cycle and is different from our earlier choice in \ref{subsec:expopsei}. Figure \ref{fig:comparegraph} shows the isotopy between the two not crossing self-intersections of $\bL$, and hence they result in the same calculation. 
We then have the following.
%find the extent to which we can generalize this. As Floer invariants of abelian covers of a Liouville manifold $X$ can be transcribed to fit in the context of equivariant geome	try, it is reasonable to suspect the mirror relation to hold for general abelian covers of $P$ as well.

\begin{figure}[h]
\includegraphics[scale=0.4]{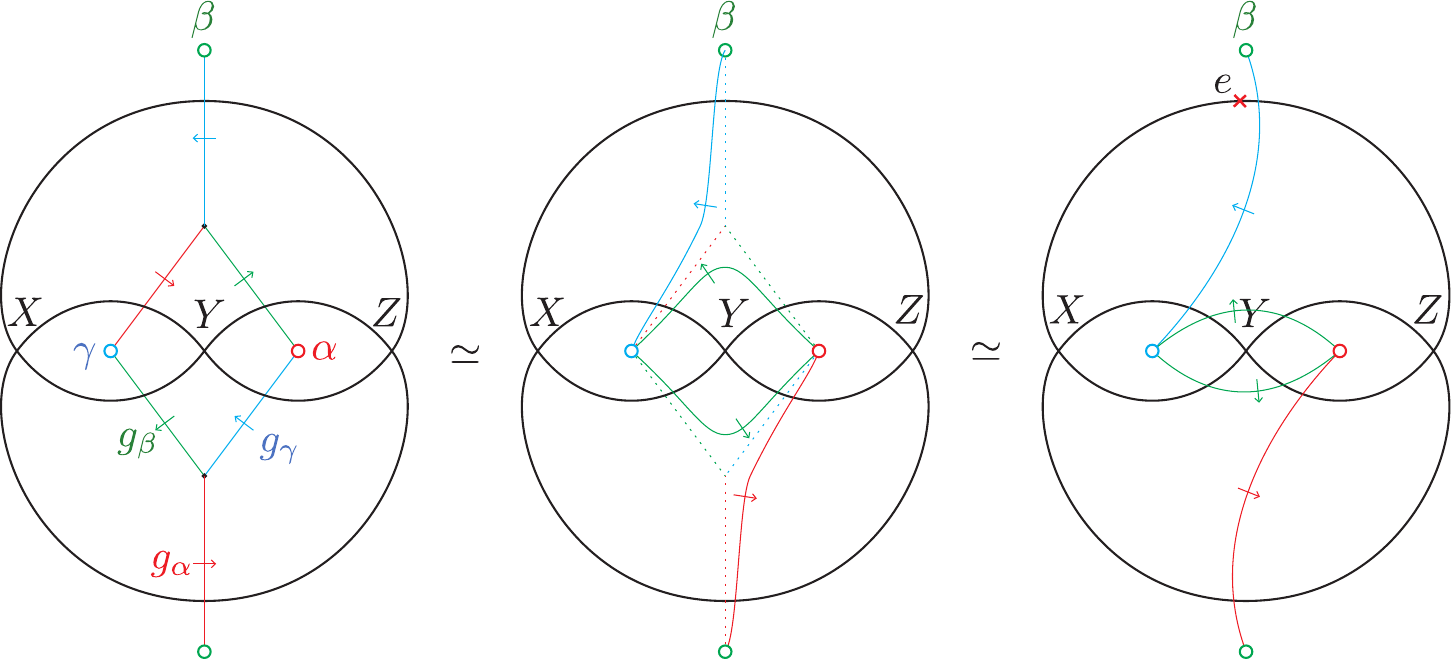}
\caption{Deforming $\Theta$ on $P$ given in Figure \ref{fig:G-graph}}
\label{fig:comparegraph}
\end{figure}

\begin{thm}\label{thm:mainmain}
%The map
%\[\mathfrak{ks}_{\HG}:SC_\HG^\ast(P)\to CF_{\HG}((\bL,b),(\bL,b))\]
%defined in \eqref{eqn:equiksdef} is a $G\times\HG$-equivariant quasi-isomorphism.
The map
\[\mathfrak{ks}_{\HG}:SC_\HG^*(P)\to CF_{\HG}((\bL,b),(\bL,b))\]
 is a quasi-isomorphism. Hence, combined with Lemma \ref{lem:ksalgebraic}, its cohomology-level map
 $$ KS_{\HG} : SH_\HG^*(P)\to HF_{\HG}((\bL,b),(\bL,b))$$
 is a ring isomorphism.
\end{thm}

\begin{figure}[h]
\includegraphics[scale=0.6]{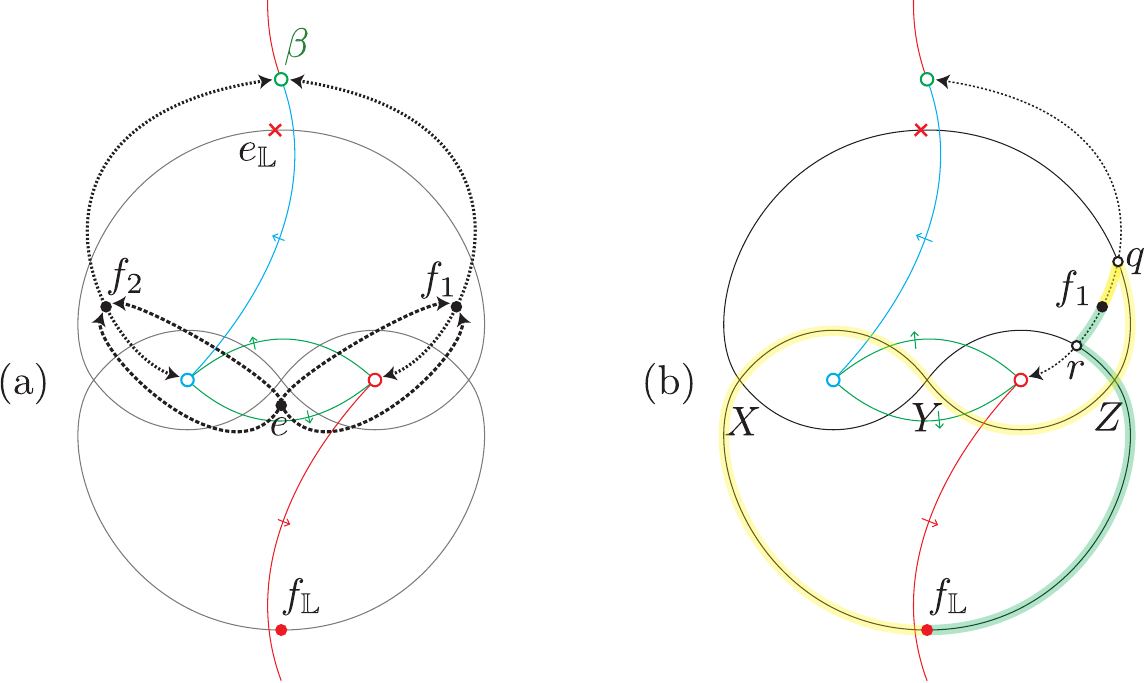}
\caption{Choice of Morse functions and their trajectories contributing to $\mathfrak{ks}_{\hat{G}}$}
\label{fig:morsetrajectories}
\end{figure}

\begin{proof}
%Instead of directly proving that $\mathfrak{ks}_{\hat{G}}$ is a quasi-isomorphism, 
Since we know $\mathfrak{ks}_{\HG}$ preserves the product structure, it suffice to prove that the composition below is a quasi-isomorphism of complexes
\[
\begin{tikzcd}[column sep=small]
SC_\HG^*(P) \ar[r, "\mathfrak{ks}_{\HG}", shorten <= -0.2em, shorten >= -0.2em]
	& CF_{\HG}((\bL,b),(\bL,b)) \ar[r, "="]
		& CF((\bL,b),(\bL,b); \HG) \ar[r, "\sim", "\tau"']
			& K^*(\partial W,\HG).
\end{tikzcd}
\]
in each $\chi$-sector $SC^*(P) \otimes \chi$.
We denote this composition by $\mathcal{F} \otimes \chi$. The case where $\chi=1$ corresponds to the pair-of-pants $P$ itself, which is shown in Corollary \ref{cor:basecase}. Assume $\chi\neq1$ for the rest of the proof.

We first claim that the part of $SC^*(P) \otimes \chi$ generated by Morse critical points $e \otimes \chi$, $f_1 \otimes \chi$ and $f_2 \otimes \chi$ gives rise to an 1-dimensional cohomology in the odd degree. To see this, we count the gradient trajectories $\gamma_i$ and $\delta_i$ in Figure \ref{fig:comparegraph} between these generators $e$ and $f_i$, but with the weights determined by their intersections with $\Theta$ (due to the effect of twisting in the $\chi$-sector). By direct calculations 
%Note that $SC^\ast(P) \otimes \chi$ admits a direct sum decomposition over the free loop classes in $P$. Thus if we fix a free loop class $\ell$, we get a subcomplex $SC^{\ast}_{\ell}(P;\chi)$. Let $SC^{\ast}_{l=0}(P;\chi)$ be the subcomplex corresponding to the contractible loop class. The structure of $SC^{\ast}_{l=0}(P;\chi)$ is the same as the Morse complex $CM^\ast(h;P)$ but equipped with a $\chi$-twisted differential. By examining how the gradient flows intersect with $\Theta$, we find that
\[d(e\otimes\chi)= \pm\left(\chi(\gamma_1) -  \chi(\delta_1)\right) f_1 \otimes \chi \pm \left(\chi(\gamma_2) - \chi(\delta_2)\right) f_2 \otimes\chi. \] 
This is always nonzero for $\chi \neq 1$ since $\gamma_1* \delta_1^{-}$, $\gamma_2 * \delta_2^{-}$ generate the first homology of $P$. Hence $e \otimes \chi$ does not support a cocycle in this case, and $f_1, f_2$ are colinear in the cohomology.  
%The three cocycles in $SC^{1}_{c}(P;\chi)$ has two independent linear relations coming from the differential of two cochains in $SC^{0}_{c}(P;\chi)$, leaving 

Let us next consider non-contractible loops of $SC^*(P) \otimes \chi$. We only have possibly nontrivial Floer differential between the elements of the form $e_\alpha t_\alpha^n \otimes \chi$ and $f_\alpha t_\alpha^n \otimes \chi$ (and similar for $\beta$ and $\gamma$) since otherwise there does not exist any contributing Floer cylinders.
%
%When $\ell$ is non-contractible, $SC^{\ast}_{\ell}(P;\chi)$ is either zero or the span of two cochains, one for each degree. Specifically, $SC^{\ast}_{\ell}(P;\chi)$ is the span of two cochains iff $\ell=$ where $n$-folded class of one of the boundary circles $S_\alpha,$ $S_\beta,$ and $S_\gamma$ of $P^{in}$.
%Let $\ell$ be a non-contractible loop class such that $SC^{\ast}_{\ell}(P;\chi)\neq0$ and let $S$ be the boundary circle corresponding to $\ell$. Let $e_\ell\otimes\chi$ and $f_\ell\otimes\chi$ the be the generators of $SC^{\ast}_{\ell}(P;\chi)$, even and odd. 
There are exactly two holomorphic cylinders $u,v$ whose two ends asymptote $e_\alpha t_\alpha^n \otimes \chi$ (input) and $f_\alpha t_\alpha^n \otimes \chi$ (output) or their underlying orbits. The contributions of the two cylinders $u,v$ cancel in the absence of $\chi$, so both $e_\alpha t_\alpha^n$ and $f_\alpha t_\alpha^n$ survive in $SH^*(P)$. 
%This results that both $e_\ell$ and $f_\ell$ represents a nontrivial cohomology class in $SC^\ast(P)$, whereas 
However, the twisted differential of $SC^*(P)\otimes \chi$ applied to $e_\alpha t_\alpha^n\otimes\chi$ gives $\pm(\chi(\gamma_{u})-\chi(\gamma_{v}))f_\alpha t_\alpha^n\otimes\chi$, and $\chi(\gamma_{u})-\chi(\gamma_{v})=0$ if and only if  $\chi ( \gamma_u * \gamma_v^{-} ) = \chi (\alpha) = 1$ (since $\gamma_u * \gamma_v^{-}$ or its reverse is homotopic to $\alpha$, see  \cite[Section 2]{CFHW} for more details).

To summarize, $SH^* (P) \otimes \chi$ takes a single generator in degree $1$ from the Morse part for $\chi \neq 1$, and  takes two generators $e_\alpha t_\alpha^n \otimes \chi$ and $f_\alpha t_\alpha^n \otimes \chi$ ($n \geq 1$) if and only if $\chi(g_\alpha)   = 1$, and similar for $\beta$ and $\gamma$. Let us now look into the behavior of $\mathfrak{ks}_{\HG}$ on these generators. We divide our proof  into two cases depending on the action of $\chi$ on three variables $x$, $y$ and $z$.
\newline

\noindent\textit{Case 1.} When $\chi$ fixes at least one of $x$, $y$, and $z$:\\ 
Without loss of generality, we assume $\chi\cdot x = x$. As seen above, there are exactly two nonzero cohomology classes $e_\alpha t_\alpha^n \otimes \chi$ and $f_\alpha t_\alpha^n \otimes \chi$ (of degrees 0 and 1, respectively) belonging to the free loop class $\alpha^n$ for each $n \geq 1$. Summing up over all $n \geq 1$ and adjoining $f_1 \otimes \chi$, they form a free $\mathbb{C}e \oplus e_\alpha t_\alpha \C [ t_\alpha] $-module of rank $2$, generated by $e_\alpha t_\alpha \otimes \chi$ and $f_1   \otimes \chi$ in degree $0$ and $1$. Here, $\mathbb{C}e \oplus e_\alpha t_\alpha \C [ t_\alpha]$ means $(\mathbb{C}e \oplus e_\alpha t_\alpha \C [ t_\alpha] ) \otimes 1 (\subset SH^*(P;\hat{G})$), but we omit $-\otimes 1$ for notational simplicity.

The module structure is induced from the product on $SH^*(P;\hat{G})$, and more specifically, it is given as
$$(e_\alpha t_\alpha^{n}\otimes1) \cdot (e_\alpha t_\alpha \otimes\chi)= e_\alpha t_\alpha^{n+1} \otimes\chi, \quad  (e_\alpha t_\alpha^n\otimes1)\cdot(f_1\otimes\chi) =(f_\alpha t_\alpha^n)\otimes\chi$$
where $e_\alpha t_\alpha^{n-1}\otimes1$ and $e_\alpha t_\alpha^{n-1}$ in $\mathbb{C}e \oplus e_\alpha t_\alpha \C [ t_\alpha]$ are identified. Observe that the ring $\mathbb{C}e \oplus e_\alpha t_\alpha \C [ t_\alpha]$ is nothing but a polynomial ring in $1$-variable, and in fact, it maps to $\mathbb{C}[x]$ under $\mathcal{F} \otimes 1$ .
%
%The  $e_\alpha t_\alpha \C [ t_\alpha](\otimes 1)$-module structure on the component generated by the union of these classes over $n=1,2,\cdots$ is given as 
%$$e_\alpha t_\alpha^n\otimes\chi=(e_\alpha t_\alpha^{n-1}\otimes1) \cdot (e_\alpha t_\alpha \otimes\chi), \quad (f_\alpha t_\alpha^n)\otimes\chi=(e_\alpha t_\alpha^n\otimes1)\cdot(f_2\otimes\chi).$$ 
%Note that $SH^{\ast}(P;\chi)$ can only be supported for free loop classes $c$ and $\alpha^n\,(n\geq1)$.

On the other hand, $\C[x,y,z]^\chi=\C[x]$ in this case, so the Koszul complex $K^*(\partial W^\chi)$ is a $\C[x]$-module in the following form:
\[
\begin{tikzcd}[column sep=small]
0 \rar & \C[x]\theta_x\theta_y\theta_z \rar{0} & \C[x]\theta_y\theta_z \rar & 0.
\end{tikzcd}
\]
Hence its cohomology $Kos(W,\chi)$ is a free $\C[x]$-module of rank $1$ in each degree ($0$ or $1$). It is freely generated by any nonzero multiples of $\theta_x\theta_y\theta_z $ and $\theta_y\theta_z $ in degree $1$ and $0$, respectively. 

Pseudo-holomorphic curves contributing to $\mathfrak{ks}_{\hat{G}} ( e_\alpha t_\alpha \otimes \chi )$ are already classified in the proof of Lemma \ref{lem:KSpoppop}. Additional factor here is the nontrivial weight coming from the intersection between $\Theta$ and $\gamma_u$ or $\gamma_{u'}$. Take these into account, we obtain
\begin{equation}\label{eqn:kshatge1}
\mathfrak{ks}_{\hat{G}} ( e_\alpha t_\alpha \otimes \chi )= \pm(\chi(g_\beta)-1)\bar{X}\otimes \chi \pm  x\,e_\bL\otimes\chi .
\end{equation}
After mapped under $\tau$ in \eqref{eqn:tautautau}, only the first term in \eqref{eqn:kshatge1} survives, and becomes a nonzero multiple of $\theta_y \theta_z$ since $\chi(g_\beta) \neq 1$.

Let us now compute $\mathfrak{ks}_{\hat{G}} (f_1 \otimes \chi )$. Considering the behavior of  $\tau$ in \eqref{eqn:tautautau},  it suffice to prove that the coefficient of $f_\bL \otimes \chi$ in $\mathfrak{ks}_{\hat{G}} (f_1 \otimes \chi )$ is nonzero (so that $f_1 \otimes \chi $ maps to a nonzero multiple of $\theta_x \theta_y \theta_z $ under $\mathcal{F}\otimes \chi$.) Contributions are from two (concatenated) morse trajectories marked in \ref{fig:morsetrajectories} (b). Tracking the intersections of these trajectories with  $\Theta$, we have
\begin{equation}\label{eqn:ksf1}
\mathfrak{ks}_{\hat{G}} (f_1 \otimes \chi ) = \pm (1 - \chi(g_\gamma))f_\bL \otimes\chi\pm A yY\otimes\chi\pm B xX\otimes\chi ,
%\mathfrak{ks}_{\hat{G}} (f_1 \otimes \chi ) = \pm (1 - \chi(g_\gamma))f_\bL \otimes\chi\pm \chi(g_\gamma)yY\otimes\chi\pm\chi(g_\alpha^{-1})\chi(g_\gamma)xX\otimes\chi.
\end{equation}
and we see that the coefficient of $f_\bL \otimes \chi$ is nonzero as desired. (They cancel each other when $\chi=1$ as seen in the proof of Lemma \ref{lem:KSpoppop}.)
%
%{\color{red} Fill out the details
%Trick:
%$$\mathfrak{ks}_\chi ( d_\chi (e \otimes \chi) ) = \mathfrak{ks}_\chi ((...) f_1 \otimes \chi ) $$
%$$ d_\chi ( \mathfrak{ks}_\chi (e \otimes \chi) )= (...) ( \mathfrak{ks}_\chi  f_1 \otimes \chi ) $$
%$$ d_\chi ( (....) e_\bL \otimes \chi )= (...) ( \mathfrak{ks}_\chi  f_1 \otimes \chi ) $$
%$$  (....) ( (....) xX \otimes \chi + (....) yY \otimes \chi )= (...) ( \mathfrak{ks}_\chi  f_1 \otimes \chi ).$$
%}
%
%...
%Therefore, combined with \ref{}, $e_\alpha t_\alpha \otimes \chi$ and $f_1 \otimes \chi$ are mapped under $\mathcal{F}  \otimes \chi$ to $( ..... )\theta_x\theta_y$ and $(....) \theta_x\theta_y\theta_z$, respectively, which are free generators of $Kos(W,\chi)$ over $\mathbb{C}[x]$. 
Consequently, $\mathcal{F} \otimes \chi$ matches sets of free generators of two free modules over $\mathbb{C}e \oplus e_\alpha t_\alpha \mathbb{C}[t_\alpha]$ and $\mathbb{C} [x]$ which are identified as rings by $\mathcal{F} \otimes 1$, and hence, is an isomorphism.
 %for some $C\in\C$ and $D \in \C$. 
%Note that $C\neq0$ and $D\neq0$ {\color{red} since the vanishing of any of these numbers implies $\chi \cdot y=y$ or $\chi \cdot z = z$ so that $\chi=1$.}

%
%As these cohomology classes pass through the map $\mathcal{F}$, we see that $e_\alpha t_\alpha \otimes\chi\in SH^{0}_{\alpha}(P;\chi)$ maps to $C\cdot\theta_x\theta_y$ for some $C\in\C$ and $f_1\otimes\chi\in SH^{1}_{c}(P;\chi)$ maps to $D\cdot\theta_x\theta_y\theta_z$ for some $D \in \C$. Note that $C\neq0$ and $D\neq0$ {\color{red} since the vanishing of any of these numbers implies $\chi \cdot y=y$ or $\chi \cdot z = z$ so that $\chi=1$.}

%Note that the cohomology of $K^\ast(\partial W^\chi)$ is a free $\C[x]$-module on which the multiplication by $x$ is obvious. The action of $e_\alpha t_\alpha\otimes1\in e_\alpha t_\alpha\C[t_\alpha]$ on the corresponding cocycles of $SC^\ast(P)\otimes\chi$ is exactly the same, establishing the desired quasi-isomorphism.

\vspace{0.4cm}

\noindent\textit{Case 2.} When $\chi$ fixes none of $x$, $y$, and $z$:\\
We have $\C[x,y,z]^\chi=\C$ and the Koszul complex is the free module generated by $\theta_x\theta_y\theta_z$ over $\C$. Also, non-contractible orbits cannot become a cocycle in $SC^*(P)\otimes\chi$ as we have examined above. 
%To be more precise, we have that $\chi(\alpha) = 1$ if and only if $\chi \cdot x = x$, etc. This contradicts the hypothesis. 
Thus the only cohomology generator of $SH^*(P)\otimes\chi$ is of degree 1, represented by the class $f_1\otimes\chi$ or $f_2\otimes\chi$. As   in $\eqref{eqn:ksf1}$, $\mathfrak{ks}_{\HG} (f_1 \otimes \chi)$ has a nonzero coefficient for $f_\bL\otimes\chi$ term (since $\chi(g_\gamma) \neq 1$), and maps to a nonzero multiple of $\theta_x \theta_y \theta_z$ under $\mathcal{F} \otimes \chi$, which completes the proof.
\end{proof}

The following is an immediate consequence of $G\times\HG$-equivariance of $\mathfrak{ks}_{\HG}$.

\begin{cor}\label{cor:final}
The Kodaira-Spencer map restricts to a $G$-equivariant quasi-isomorphism
\[\mathfrak{ks}_{\HG}:\left(SC_\HG^*(P) \right)^{\HG}\to \left(CF_{\HG}((\bL,b),(\bL,b))\right)^{\HG}\]
on $\HG$-invariant parts of both sides. Therefore we obtain an isomorphism
$$SH^*(X) \cong Kos (W,\hat{G})$$
as modules. \end{cor}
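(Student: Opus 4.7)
The plan is to deduce Corollary \ref{cor:final} almost immediately from Theorem \ref{thm:mainmain} by passing to $\HG$-invariants, and then chase the resulting isomorphism through the identifications already set up in the paper. The first step is to observe that because $\mathfrak{ks}_{\HG}$ is $\HG$-equivariant, it maps $(SC^\ast_{\HG}(P))^{\HG}$ into $(CF_{\HG}((\bL,b),(\bL,b)))^{\HG}$, and this restriction is automatically a dga homomorphism (it inherits its multiplicative properties from $\mathfrak{ks}_{\HG}$ itself). Moreover, since $\HG$ commutes with the $G$-action on $SC^\ast_{\HG}(P)$ and on $CF_{\HG}((\bL,b),(\bL,b))$, the restricted map is $G$-equivariant.

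The key point is that the invariants functor $(-)^{\HG}$ is exact on $\Bbbk[\HG]$-modules, since $\HG$ is a finite abelian group and we are working over $\Bbbk = \C$ (of characteristic zero). Concretely, the averaging projector $\frac{1}{|\HG|}\sum_{\chi\in\HG}\chi$ exhibits $(-)^{\HG}$ as a direct summand functor, so it preserves cohomology. Applying this to the quasi-isomorphism in Theorem \ref{thm:mainmain}, I conclude that
\[
\mathfrak{ks}_{\HG}\colon (SC_{\HG}^\ast(P))^{\HG} \longrightarrow (CF_{\HG}((\bL,b),(\bL,b)))^{\HG}
\]
is still a quasi-isomorphism of dgas, which proves the first assertion of the corollary.

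For the module isomorphism $SH(X) \cong Kos(W,\HG)$, the plan is simply to concatenate the identifications already established. On the symplectic side, Proposition \ref{prop:isomupdownsc} gives a $G$-equivariant chain isomorphism $SC^\ast(X) \xrightarrow{\sim} (SC_{\HG}^\ast(P))^{\HG}$, hence $SH^\ast(X) \cong H^\ast\big((SC_{\HG}^\ast(P))^{\HG}\big)$. On the Lagrangian Floer side, the choice of $\Theta$ fixed at the beginning of Section \ref{sec:KSRS} (cf.\ Lemma \ref{lem:idbetntwo} via the isotopy in Figure \ref{fig:comparegraph}) identifies our equivariant complex $CF_{\HG}((\bL,b),(\bL,b))$ with the semi-direct construction $CF((\bL,b),(\bL,b);\HG)$ of Section \ref{subsec:BmLF} as $\AI$-algebras, and hence as $\HG$-modules after taking invariants. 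Then Corollary \ref{cor:cfkostau} yields a quasi-isomorphism $(CF((\bL,b),(\bL,b);\HG))^{\HG} \to Kos(W,\HG)$ (as the orbifold Koszul algebra is computed on cohomology from $(\bigoplus_\chi K^\ast(\partial W^\chi)\theta_{I_\chi})^{\HG}$ via Proposition \ref{prop:mfkos}).

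Stringing these together produces the desired isomorphism
\[
SH^\ast(X) \;\xrightarrow{\;\sim\;}\; \big(H^\ast SC^\ast_{\HG}(P)\big)^{\HG} \;\xrightarrow[\mathfrak{ks}_{\HG}]{\sim}\; \big(H^\ast CF_{\HG}((\bL,b),(\bL,b))\big)^{\HG} \;\xrightarrow{\;\sim\;}\; Kos(W,\HG).
\]
I do not expect a genuine obstacle here; the whole statement is a formal consequence of Theorem \ref{thm:mainmain} combined with exactness of $(-)^{\HG}$ in characteristic zero. The only minor point requiring attention is bookkeeping: one should verify that the specific $\Theta$ used to establish Theorem \ref{thm:mainmain} (Figure \ref{fig:morsetrajectories}) is isotopic through $\bL$-avoiding configurations to the one from Figure \ref{fig:G-graph} so that Lemma \ref{lem:idbetntwo} applies; this isotopy is exactly the content of Figure \ref{fig:comparegraph}. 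Finally, note that the isomorphism is stated as one of modules only; transporting the product on $(CF_{\HG}((\bL,b),(\bL,b)))^{\HG}$ to $Kos(W,\HG)$ in general produces a new product that differs from the canonical orbifold Jacobian one, as already anticipated in the statement of Theorem \ref{thm:mainksrs}.
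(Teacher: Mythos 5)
Your argument is correct and follows the same route the paper takes: Corollary~\ref{cor:final} is in the paper simply flagged as ``an immediate consequence of $G\times\HG$-equivariance,'' and what you spell out — that $\HG$-equivariance lets $\mathfrak{ks}_{\HG}$ restrict to invariants, that $(-)^{\HG}$ is exact over $\C$ so quasi-isomorphism is preserved, and that the two endpoints are then identified with $SH^\ast(X)$ and $Kos(W,\HG)$ via Proposition~\ref{prop:isomupdownsc} and Corollary~\ref{cor:cfkostau}/Proposition~\ref{prop:mfkos} respectively (with Lemma~\ref{lem:idbetntwo} and the isotopy of Figure~\ref{fig:comparegraph} handling the choice of $\Theta$) — is precisely that immediate consequence made explicit. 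No gaps.
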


We can also equip $Kos (W,\hat{G})$ with the product structure by transferring $m_{2,\hat{G}}^{b\otimes 1,b\otimes 1,b\otimes 1 }$ on $ H^* \left(CF_{\HG}((\bL,b),(\bL,b))\right)^{\HG}$ in this case. Corollary \ref{cor:final} justifies the new product structure in the mirror symmetry context. More general treatment will be provided elsewhere. The explicit formula for the differential on $CF_{\HG}((\bL,b),(\bL,b))$ is presented in Appendix \ref{app:doncfg} as well as one convenient choice of generators for its cohomology (which is an infinite dimensional vector space over $\mathbb{C}$).

\subsection{Examples}\label{subsec:exexex}

We end our discussion with simple examples that manifest the feature of the effect of orbifolding. The following are the examples whose mirrors are heuristically explained in Introduction (see Figure \ref{fig:4punc} and Figure \ref{fig:3puncell}). Here, we give a precise formulation of their orbifold LG mirrors.

\subsubsection{Sphere with four punctures}\label{subsubsec:4punc}
Suppose $X$ is a sphere with four punctures. It admits the rotation action of $G=\mathbb{Z}/2$ whose quotient is the pair-of-pants $P$. As the deck transformation group, $G$ can be identified as the quotient of $\pi_1 (P)$ as
$$ \langle g_\alpha,g_\beta, g_\gamma : g_\alpha^2 = g_\alpha g_\beta  = g_\gamma = 1 \rangle.$$
%The covering $X \to P$ corresponds to 
%The order 2 group homomorphism $\tilde{\chi}:\pi_1(P) \to U(1)$ given by
%\[\tilde{\chi}(\alpha)=1, \quad \tilde{\chi}(\beta) = \tilde{\chi}(\gamma) = -1\]
%results in an order 2 group $G = \pi_1(P)/\ker\,\chi$ and a $G$-covering $X \to P$ which is a 4-punctured sphere. 
In the character group $\HG$, we have a unique non-unit $\chi$ whose value on the generator $g_\alpha=g_\beta$ is equal to $-1$. This specializes the situation in ``\emph{case} 1" in the proof of Theorem \ref{thm:mainmain}.
%The nontrivial twisted sector $SH(P) \otimes \chi$ in $SH(X)=SH(P;\hat{G})^{\hat{G}}$ is generated by
%$$ ....$$
%in degree $0$,
%$$....$$
%in degree $1$.
%
On the mirror side, the nontrivial twisted sector $Kos(W,\chi)$ is given by
$$ \mathbb{C}[z] \theta_x \theta_y \oplus \mathbb{C}[z] \theta_x \theta_y \theta_z.$$
Note that this is precisely the (algebraic) de Rham complex of $\mathbb{C}$ (with coordinate $z$) up to degree shift which appear in Figure \ref{fig:3puncell} (b). 
In terms of $CF((\bL,b),(\bL,b)) \otimes \chi$, $ \theta_x \theta_y$ and $\theta_x \theta_y \theta_z$ correspond to
\begin{equation*}
 \bar{Z}\otimes\chi + \frac{1}{2} z\, e_\bL\otimes\chi , \quad \,f_\bL\otimes\chi + \frac{1}{2} 
(yY\otimes\chi + zZ\otimes\chi).
\end{equation*}
One can compute the new product structure on $Kos(W,\hat{G})$ using these representatives in the Floer complex or using their corresponding elements in $SH^*(X)$ (see Appendix \ref{AppendixB}).

\subsubsection{Torus with three punctures}
Let us now take $X$ to be a torus with three punctures. The deck transformation group of $X \to P$ can be presented as
$$G=  \langle g_\alpha,g_\beta, g_\gamma : g_\alpha^3 = g_\beta^3 g_\gamma^3  = g_\alpha g_\beta g_\gamma = 1 \rangle,$$
and choose $\chi \in G$ such that $\chi(g_\alpha)=\chi(g_\beta) = \chi (g_\gamma) = \rho$
where $\rho = e^{2 \pi i / 3} \in \C$. This specializes ``\emph{case} 2" in the proof of Theorem \ref{thm:mainmain}.
% satisfies $\rho^2+\rho+1=0$ gives us an orbifold model of the 3-punctured elliptic curve.

In this case, we have a twist sector for each of $\chi$ and $\chi^2$. Since there are no invariant variables, twisted sectors are given as
$$Kos(W,\chi)  = \mathbb{C} \, \theta_x \theta_y \theta_z, \quad Kos(W,\chi^2) = \mathbb{C} \, \theta_x \theta_y \theta_z. $$
These correspond to the two additional points in Figure \ref{fig:3puncell} (b).

In terms of $CF((\bL,b),(\bL,b) ; \hat{G})^{\hat{G}}$, these two sectors can be more clearly distinguished. They are generated by 
\begin{equation*}
(1-\rho)f_\bL\otimes\chi - \rho^2yY\otimes\chi + zZ\otimes\chi,\quad 
(1-\rho^2)f_\bL\otimes\chi^2-  \rho yY\otimes\chi^2+ zZ\otimes\chi^2,
\end{equation*}
respectively for $\chi$ and $\chi^2$.
%\section{speculations}
%{\color{red}
%
%\begin{enumerate}
%\item For $\phi: \pi_1 (\Sigma) = \langle \alpha,\beta,\gamma : \alpha \beta \gamma =1 \rangle \to \mathbb{Z} / n \times \mathbb{Z} /m$ with $\phi (\alpha) = (1,0)$ and $\phi (\beta) = (0,1)$, the corresponding (abelian) covering of the pair-of-pants is a genus $g=\frac{(m-1)(n-1) - d+1}{2}$ surface with $(m+n+d)$ punctures where $d = \gcd (m,n)$. Does this family cover all the punctured Riemann surface having at least $3$-punctures?
%\item If a given punctured Riemann surface has two different abelian quotient (into the pair-of-pants), we have different equivariant structures on the mirror LG model. Is there some natural (intrinsic) reason to expect that the corresponding orbifold LG models are equivalent? (E.g., 6-punctured sphere can be quotiented by $\mathbb{Z}/4$ or $\mathbb{Z} / 2 \times \mathbb{Z} /2$ to give a pair-of-pants.)
%\item $SH$ can be $\mathbb{Z}$-graded after fixing a spin structure on the punctured Riemann surface. What $\mathbb{Z}$-grading on LG side correspond to this?
%\item How to find mirrors for surfaces with less than 3 punctures. E.g. Once-punctured torus looks doable.
%\item What are ``maximal" quotients of a given Riemann surface?
%
%
%\end{enumerate}
%}

\appendix 

\section{Calculation of $CF_{\HG}((\bL,b),(\bL,b))$ for the Seidel Lagrangian $\bL$}\label{app:doncfg}
In the following, we give an explicit calculation of the differential $d = m_{1,\HG}^{b\otimes1,b\otimes1}$ on $CF_{\HG}((\bL,b),(\bL,b))=CF((\bL,b),(\bL,b)) \otimes \Bbbk [\hat{G}]$ for the Seidel Lagrangian $\bL$ in $P$ with the choice of $\Theta$ in Figure \ref{fig:comparegraph}. 
The proof is similar to the argument in \ref{subsec:popFloer} (Proposition \ref{prop:CFKOS}, in particular)) which can be viewed as the computation for the component $CF((\bL,b),(\bL,b)) \otimes \chi$ with $\chi=1 \in \hat{G}$. Alternatively, one can use the identification in Lemma \ref{lem:idbetntwo} and compute the differential on $CF  (\bL,\bL;\hat{G})$. We leave details as an exercise for readers.

\begin{lemma}
For $\chi\in\HG$, we have
\begin{equation}\label{eqn:cfevendiff}\arraycolsep=1.4pt
\begin{array}{rrrr}
d(e_\bL\otimes\chi) = &\left(1-\chi(g_\alpha^{-1})\right)xX& + \left(1-\chi(g_\beta^{-1})\right)yY & + \left(1-\chi(g_\gamma^{-1})\right)zZ,\\

d(\bar{X}\otimes\chi)= & \left(\chi(g_\alpha)-1\right)x\,f_\bL &+\chi(g_\beta^{-1})xyY &-xzZ,\\

d(\bar{Y}\otimes\chi)=&\left(\chi(g_\beta)-1\right)y\,f_\bL&-\chi(g_\alpha^{-1})xyX&+\chi(g_\beta)yzZ,\\

d(\bar{Z}\otimes\chi)=&\left(\chi(g_\gamma)-1\right)z\,f_\bL&+\chi(g_\gamma) \chi(g_\alpha^{-1}) xzX&-\chi(g_\gamma)yzY\; ,
\end{array}
\end{equation}
and
\begin{equation*}\label{eqn:cfodddiff}\arraycolsep=1.4pt
\begin{array}{rrrr}
d(f_\bL\otimes\chi) = &\chi(g_\alpha^{-1})yz\bar{X}&+\chi(g_\gamma)xz\bar{Y}&+xy\bar{Z},\\

d(X\otimes\chi) = &yz\,e_\bL& +  \left(\chi(g_\alpha)-\chi(g_\beta^{-1})\right)z\bar{Y} & + \left(\chi(g_\gamma^{-1})-\chi(g_\alpha)\right)y\bar{Z},\\

d(Y\otimes\chi) = &\chi(g_\alpha^{-1})xz\,e_\bL& +  \left(\chi(g_\alpha^{-1})-\chi(g_\beta)\right)z\bar{X} &+ \left(\chi(g_\beta)-\chi(g_\gamma^{-1})\right)x\bar{Z},\\

d(Z\otimes\chi) = &\chi(g_\gamma)xy\,e_\bL&+\left(\chi(g_\gamma)-\chi(g_\alpha^{-1})\right)y\bar{X}&+ \left(\chi(g_\beta^{-1})-\chi(g_\gamma)\right)x\bar{Y}\; .
\end{array}
\end{equation*}
All the outputs belong to the $\chi$-sector by definition of $d$ ($-\otimes \chi$ omitted on the right hand side).
%confined ourselves to denoting the sector (which is $\chi$) from the output.
\end{lemma}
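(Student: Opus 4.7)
The plan is to reduce to the $\chi=1$ computation already carried out in \ref{subsec:popFloer} and to track the additional character factors introduced by the twist. By Lemma \ref{lem:idbetntwo}, noting that the $\Theta$ of Figure \ref{fig:comparegraph} is isotopic to the one in Figure \ref{fig:G-graph} without crossing any self-intersection of $\bL$, we may equivalently regard $CF_{\HG}((\bL,b),(\bL,b))$ as the semi-direct product $CF((\bL,b),(\bL,b);\HG)$ from \ref{subsec:BmLF}. Specializing the semi-direct product formula \eqref{twistedainfty} at $k=1$ then yields
\[
d(a \otimes \chi) \;=\; m_1^{\rho(\chi) b,\, b}(a) \otimes \chi,
\]
where $\rho(\chi) b$ is obtained from $b=xX+yY+zZ$ by multiplying each immersed generator $X$, $Y$, $Z$ by $\chi(g_\alpha^{-1})$, $\chi(g_\beta^{-1})$, $\chi(g_\gamma^{-1})$ respectively. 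Thus the lemma reduces to computing the boundary-deformed Floer differential $m_1^{b_1,b_2}$ on $CF(\bL,\bL)$ at $(b_1,b_2) = (\rho(\chi)b,\, b)$ on each of the eight generators.

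For each generator $a$ we expand
\[
m_1^{\rho(\chi) b,\, b}(a) \;=\; \sum_{j,k \geq 0} m_{j+1+k}\bigl(\underbrace{\rho(\chi) b, \ldots, \rho(\chi) b}_{j},\, a,\, \underbrace{b,\ldots,b}_{k}\bigr),
\]
so that each pseudo-holomorphic polygon already enumerated in \ref{subsec:popFloer} for the untwisted case now acquires a character factor equal to the product of $\chi(g_\alpha^{-1})$, $\chi(g_\beta^{-1})$, $\chi(g_\gamma^{-1})$ over the specific $X,Y,Z$-corners drawn from the $j$ left-hand insertions. The $a=e_\bL$ case is immediate from the unit property: $m_1^{b_1,b_2}(e_\bL)= b_2 - b_1 = b - \rho(\chi)b$, which yields the first line of \eqref{eqn:cfevendiff}. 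For $a=X$ (and, by the cyclic $\Z/3$-symmetry of $\bL$, for $a=Y,Z$), the triangular disk realizing $m_3(X,Y,Z)=e_\bL$ contributes the untwisted $yz\,e_\bL$ term (both $b$-insertions lie to the right of $X$), while the two Seidel bigons with opposite orientations incident to the $X$-corner combine into the twist-dependent $z\bar Y$- and $y\bar Z$-coefficients of \eqref{eqn:cfodddiff}. The generators $\bar X, \bar Y, \bar Z$ and $f_\bL$ are treated parallelly using the $\chi=1$ identities $m_1^{b,b}(\bar X) = x(yY-zZ)$ and $m_1^{b,b}(f_\bL) = xy\bar Z + yz\bar X + zx\bar Y$ from \ref{subsec:popFloer}; the new $f_\bL$-component of $d(\bar X \otimes \chi)$ and its cyclic analogues arise from pairs of disks whose sign cancellation at $\chi=1$ is broken by the $\rho(\chi)$-twist on the left-hand insertion.

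The principal obstacle is bookkeeping: one must simultaneously track the $A_\infty$-sign convention, the signs induced by the nontrivial spin structure on $\bL$, and the character weights assigned to the left-hand $\rho(\chi)$-insertions. All of these are already reconciled at $\chi = 1$ in \ref{subsec:popFloer}, and the character twist only multiplies each individual pseudo-holomorphic contribution by a well-defined scalar, so no qualitatively new subtlety arises beyond careful bookkeeping. The $\Z/3$-cyclic symmetry of the Seidel Lagrangian further reduces the verification to essentially four distinct cases, namely $a \in \{e_\bL, X, \bar X, f_\bL\}$, which is the exercise the authors leave to the reader.
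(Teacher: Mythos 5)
Your reduction is exactly the ``alternative'' route the paper itself names: use Lemma~\ref{lem:idbetntwo} to identify $CF_{\HG}((\bL,b),(\bL,b))$ with the semi-direct product $CF((\bL,b),(\bL,b);\HG)$, specialize the twisted $A_\infty$-formula \eqref{twistedainfty} at $k=1$ to get $d(a\otimes\chi)=m_1^{\rho(\chi)b,b}(a)\otimes\chi$, and then expand into left/right $b$-insertions, reading off the character weights from the left-hand $\rho(\chi)$-insertions. That is precisely where the paper leaves the details as an exercise, and your plan is correct.

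One inaccuracy worth flagging: the $\Z/3$-cyclic symmetry of the Seidel Lagrangian does \emph{not} reduce the verification to the four cases $a\in\{e_\bL,X,\bar X,f_\bL\}$, because the relative position of $\Theta$ with respect to the Floer generators breaks this symmetry. This is already visible in the answer you are asked to prove: the $e_\bL$-coefficients of $d(X\otimes\chi)$, $d(Y\otimes\chi)$, $d(Z\otimes\chi)$ are $1$, $\chi(g_\alpha^{-1})$, and $\chi(g_\gamma)=\chi(g_\alpha^{-1})\chi(g_\beta^{-1})$ respectively, which are not cyclic permutations of one another; the $f_\bL$-coefficients of $d(\bar X\otimes\chi)$, $d(\bar Y\otimes\chi)$, $d(\bar Z\otimes\chi)$ exhibit the same phenomenon. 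So one must track the intersections with $\Theta$ separately for each of $X,Y,Z$ and each of $\bar X,\bar Y,\bar Z$ rather than appealing to a cyclic reduction.
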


%\begin{proof}
%This is straightforward from $m_1^{b,b'}$ calculation in the proof of Proposition \ref{prop:CFKOS} together with Figure \ref{fig:comparegraph} since we can track the group element passed by $\gamma_{u,1}$ of each $u$ of the contributing discs.
%\end{proof}

Note that $CF_{\HG}((\bL,b),(\bL,b))$ and its cohomology are infinite dimensional over $\mathbb{C}$, but are naturally $\mathbb{C}[x,y,z]$-modules.
Below, we provide one particular choice of (independent) generators of the cohomology over $\mathbb{C}[x,y,z]$. 

Given the full expression of $d$, the following is simply an exercise in homological algebra.

%of $CF_{\HG}((\bL,b),(\bL,b))$
%
%The list of independent $d$-cocycles generating the cohomology of $CF_{\HG}((\bL,b),(\bL,b))$ will be provided also.

%As there is a full list of coboundaries, it is the turn to list the cohomology generators.
\begin{lemma}
For $\chi \neq 1$, let $P_\chi$, $Q_\chi$ and $R_\chi$ be odd degree cocycles in $CF((\bL,b),(\bL,b)) \otimes \chi (\subset CF_{\hat{G}} ((\bL,b),(\bL,b))  )$ given by
\begin{equation*}
\arraycolsep=1.4pt
\begin{array}{rrrrrrrr}
P_\chi:= & (1-\chi(g_\alpha))f_\bL\otimes\chi&-& \chi(g_\beta^{-1})yY\otimes\chi&+&zZ\otimes\chi,\\
Q_\chi:= & (1-\chi(g_\beta))f_\bL\otimes\chi&+& \chi(g_\alpha^{-1})xX\otimes\chi&-&\chi(g_\beta)zZ\otimes\chi,\\
R_\chi:= & (1-\chi(g_\gamma))f_\bL\otimes\chi&-& \chi(g_\gamma)\chi(g_\alpha^{-1})xX\otimes\chi &+& \chi(g_\gamma)yY\otimes\chi.
\end{array}
\end{equation*}
\begin{enumerate}
\item[(i)]
Suppose $\chi (g_\alpha)=1$, but $\chi (g_\beta) = \chi (g_\gamma)^{-1} \neq 1$ (\emph{cf}. ``\emph{case} 1" in the proof of Theorem \ref{thm:mainmain}). Then 
$$H^{odd} (CF ((\bL,b),(\bL,b)) \otimes \chi) \cong \C[x]$$ 
as $\C[x,y,z]$-modules, and it is generated by $[Q_\chi]$ or $[R_\chi]$ (in this case $[P_\chi]=0$). Analogous statements hold when $\chi(g_\beta)=1$ or $\chi(g_\gamma)=1$.
\item[(ii)]
If none of $\chi(g_\alpha),\chi(g_\beta),\chi (g_\gamma)$ equals to $1$ (\emph{cf}. ``\emph{case} 2" in the proof of Theorem \ref{thm:mainmain}), then 
$$H^{odd} (CF ((\bL,b),(\bL,b)) \otimes \chi) \cong \C$$ 
as $\C[x,y,z]$-modules, and it is generated by any of $[P_\chi]$ or $[Q_\chi]$ or $[R_\chi]$.

\end{enumerate}
%
%for all $\chi \in \HG$ generates the odd degree cohomology of $(CF_{\HG}((\bL,b),(\bL,b)),d)$ over $\C[x,y,z]$.
\end{lemma}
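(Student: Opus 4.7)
The plan is to reduce all assertions to the quasi-isomorphism $\tau$ of Corollary \ref{cor:cfkostau} combined with the explicit description of $Kos(W,\chi)$ from \S\ref{subsec:orbkosxyz}.

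First, I would verify directly that $P_\chi, Q_\chi, R_\chi$ are cocycles. For $P_\chi$, substituting the formulas for $d(f_\bL), d(Y), d(Z)$ and collecting the coefficients of $e_\bL, \bar X, \bar Y, \bar Z$, each one reduces to zero using only the abelianness of $G$ and the single relation $\chi(g_\alpha)\chi(g_\beta)\chi(g_\gamma)=1$ (which yields consequences like $\chi(g_\alpha^{-1}g_\beta^{-1}) = \chi(g_\gamma)$ and $\chi(g_\alpha g_\gamma) = \chi(g_\beta^{-1})$); $Q_\chi$ and $R_\chi$ follow by the cyclic symmetry of the definitions. I would also record the cochain-level identity
\[
\chi(g_\beta)\,P_\chi + Q_\chi + \chi(g_\gamma^{-1})\,R_\chi = 0,
\]
verified coefficient-by-coefficient, which shows that the three classes $[P_\chi], [Q_\chi], [R_\chi]$ are proportional by nonzero scalars whenever at least one is nonzero.

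Next, I would compute the images under $\tau$ using formula \eqref{eqn:tautautau}. Among the basis elements $\{f_\bL, X, Y, Z\}$ appearing in $P_\chi, Q_\chi, R_\chi$, only $f_\bL$ has the maximal fractional grading $1$ (the immersed generators have grading $1/3$), so the top-grading projection keeps only the $f_\bL$-terms. Since the leading coefficients $(1-\chi(g_\alpha))$, $(1-\chi(g_\beta))$, $(1-\chi(g_\gamma))$ are constants, the restriction to $\mathrm{Fix}(\chi)$ is trivial, and using $\eta(f_\bL) = -\theta_x\theta_y\theta_z$ I get
\[
\tau(P_\chi) = -(1-\chi(g_\alpha))\theta_x\theta_y\theta_z,\quad \tau(Q_\chi) = -(1-\chi(g_\beta))\theta_x\theta_y\theta_z,\quad \tau(R_\chi) = -(1-\chi(g_\gamma))\theta_x\theta_y\theta_z.
\]

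Finally, in case (i) the odd part of $Kos(W,\chi)$ is $\C[x]\cdot\theta_x\theta_y\theta_z \cong \C[x]$ (with $y, z$ acting as zero, since $\chi$ fixes only $x$), so the vanishing $\tau(P_\chi) = 0$ forces $[P_\chi] = 0$; as a redundant sanity check, using $\chi(g_\gamma^{-1})=\chi(g_\beta)$ one computes directly that $d(e_\bL \otimes \chi) = (1-\chi(g_\beta))P_\chi$, giving an explicit coboundary. Since $\chi(g_\beta), \chi(g_\gamma) \neq 1$, the classes $[Q_\chi]$ and $[R_\chi]$ map to nonzero $\C$-multiples of $\theta_x\theta_y\theta_z$, hence each generates the free $\C[x]$-module. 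In case (ii) the odd part of $Kos(W,\chi)$ is the one-dimensional space $\C\cdot\theta_x\theta_y\theta_z$, and each of the three images $\tau(P_\chi), \tau(Q_\chi), \tau(R_\chi)$ is a nonzero multiple of this generator, so any one of $[P_\chi], [Q_\chi], [R_\chi]$ is a generator (consistent with the cochain relation from Step 1). The only delicate part of the argument is the cocycle verification and the correct reading of the fractional-grading convention in \eqref{eqn:tautautau}; the module-theoretic conclusions come for free from the $\C[x,y,z]$-linearity of $\tau$ and the fact that restriction to $\mathrm{Fix}(\chi)$ is a ring map $\C[x,y,z] \to \C[x_i : i \in I^\chi]$.
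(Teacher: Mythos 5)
Your proof is correct. The paper itself dispenses with this lemma in one line, calling it ``an exercise in homological algebra'' after having displayed the full differential $d$; the intended route is therefore a direct rank/kernel computation on the $8$ generators over $\C[x,y,z]$, or alternatively via Lemma \ref{lem:idbetntwo}. What you do instead is leverage the quasi-isomorphism $\tau$ of Corollary \ref{cor:cfkostau} (and formula \eqref{eqn:tautautau}) to push everything to the orbifold Koszul complex, where the odd parts are already known to be free of rank $1$ over $\C[x_i : i \in I^\chi]$ on the generator $\theta_x\theta_y\theta_z$. This is genuinely a different argument from the brute-force one the appendix gestures at, but it is exactly the mechanism the paper deploys in the proof of Theorem \ref{thm:mainmain}, so it is consistent with the paper's methods. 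Your computation $\tau(P_\chi)=-(1-\chi(g_\alpha))\theta_x\theta_y\theta_z$ (and the cyclic analogues) is correct: $f_\bL$ is the unique term of maximal fractional grading, its coefficient is a constant, and $\eta(f_\bL)=-\theta_x\theta_y\theta_z$. The linear relation $\chi(g_\beta)P_\chi+Q_\chi+\chi(g_\gamma^{-1})R_\chi=0$ and the coboundary $d(e_\bL\otimes\chi)=(1-\chi(g_\beta))P_\chi$ in case (i) check out using only $\chi(g_\alpha g_\beta g_\gamma)=1$. The one implicit assumption --- that $\tau$ is a quasi-isomorphism sector by sector, not merely after passing to $\HG$-invariants --- is shared with the paper's own usage in Theorem \ref{thm:mainmain}, and is justified by the sector-wise nature of the proof of Proposition \ref{prop:mfkos}; it would have been worth a sentence of acknowledgment but is not a gap relative to the paper. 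What your approach buys is that you never need to compute kernels and images of $d$ explicitly; what the direct computation buys is independence from the Koszul machinery (useful if one wants concrete representatives, which is precisely what the lemma's statement provides).
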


We next describe the even degree cohomology.
For $\chi \neq 1$, define even cochains $U_\chi$, $V_\chi$ and $W_\chi$ by
\begin{equation*}\label{eqn:UVWcochain}\arraycolsep=1.4pt
\begin{array}{rrrrrr}
U_\chi:=&\chi^{-1}(g_\alpha)x\,e_\bL\otimes\chi&+&(\chi^{-1}(g_\alpha)-\chi(g_\beta))\bar{X}\otimes\chi,\\
V_\chi:=&\chi(g_\gamma)y\,e_\bL\otimes\chi&+&(\chi^{-1}(g_\beta)-\chi(g_\gamma))\bar{Y}\otimes\chi,\\
W_\chi:=&z\,e_\bL\otimes\chi&+&(\chi^{-1}(g_\gamma)-\chi(g_\alpha))\bar{Z}\otimes\chi.
\end{array}
\end{equation*}

\begin{lemma}
The even degree cohomology of $CF_{\HG}((\bL,b),(\bL,b))$ is given as follows.

\item[(i)]
Suppose $\chi (g_\alpha)=1$, but $\chi (g_\beta) = \chi (g_\gamma)^{-1} \neq 1$. Then 
$$H^{even} (CF ((\bL,b),(\bL,b)) \otimes \chi) \cong \C[x]$$ 
as $\C[x,y,z]$-modules, and it is generated by $[U_\chi]$. Analogous statements hold when $\chi(g_\beta)=1$ or $\chi(g_\gamma)=1$.
\item[(ii)]
If none of $\chi(g_\alpha),\chi(g_\beta),\chi (g_\gamma)$ equals to $1$, then 
%none of $U_\chi$, $V_\chi$, and $W_\chi$ is a cocycle and therefore
$$H^{even} (CF ((\bL,b),(\bL,b)) \otimes \chi) \cong 0$$ 
as $\C[x,y,z]$-modules.
\end{lemma}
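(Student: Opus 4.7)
Let me set $a := \chi(g_\alpha)$, $b := \chi(g_\beta)$, $c := \chi(g_\gamma)$; the vertex relation $g_\alpha g_\beta g_\gamma = 1$ in $G$ gives $abc = 1$. The even-degree part of $CF((\bL,b),(\bL,b)) \otimes \chi$ is a free $R := \mathbb{C}[x,y,z]$-module on $\{e_\bL, \bar X, \bar Y, \bar Z\}$, so I write a general even cochain as $\sigma = fe_\bL + g\bar X + h\bar Y + k\bar Z$ with $f,g,h,k \in R$. Using the differential formulas of Appendix \ref{app:doncfg}, each component of $d\sigma$ is a polynomial divisible by $x$, $y$, or $z$. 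Since $R$ is a domain, after dividing out these factors and rescaling the cocycle condition $d\sigma = 0$ becomes
\begin{align*}
(\mathrm{F}):\ &(a-1)xg + (b-1)yh + (c-1)zk = 0, \\
(\mathrm{X}):\ &(a-1)f - yh + czk = 0, \\
(\mathrm{Y}):\ &(b-1)f + xg - a^{-1}zk = 0, \\
(\mathrm{Z}):\ &(c-1)f - cxg + a^{-1}yh = 0.
\end{align*}
A short calculation using $abc = 1$ shows that $(\mathrm{F})$ and $(\mathrm{Z})$ are consequences of $(\mathrm{X})$ and $(\mathrm{Y})$; only these last two are independent.

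\textbf{Case (ii): all of $a,b,c$ nontrivial.} Equation $(\mathrm{X})$ solves $f = (yh - czk)/(a-1) \in R$, and substituting into $(\mathrm{Y})$ gives $xg = [(1-c)zk - (b-1)yh]/(a-1)$. Polynomiality of $g$ demands the bracket be divisible by $x$. Writing $h = h_0(y,z) + xh_1$, $k = k_0(y,z) + xk_1$ and reducing modulo $x$ forces $(b-1)yh_0 = (1-c)zk_0$, so by unique factorization in $\mathbb{C}[y,z]$ one has $k_0 = y\lambda$ and $h_0 = \tfrac{1-c}{b-1}z\lambda$ for some $\lambda \in \mathbb{C}[y,z]$. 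Cocycles are thus freely parametrized by the triple $(\lambda, h_1, k_1)$. A direct verification, matching the four components one at a time and repeatedly using $abc = 1$ (e.g.\ $1 - 1/(ac) = 1 - b$), shows that the odd cochain
\[
\rho := \tfrac{\lambda}{a(b-1)}X - \tfrac{ack_1}{a-1}Y + \tfrac{h_1}{c(a-1)}Z
\]
satisfies $d\rho = \sigma$. This proves $H^{\mathrm{even}} = 0$ in case (ii).

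\textbf{Case (i): exactly one of $a,b,c$ equals $1$.} By the cyclic symmetry of the setup, it suffices to treat $a = 1$; the other two subcases produce $V_\chi$ and $W_\chi$ by entirely analogous computations. When $a = 1$, $(\mathrm{X})$ degenerates to $yh = czk$, so $h = czk'$ and $k = yk'$ for some $k' \in R$; equations $(\mathrm{Y})$ and $(\mathrm{Z})$ then collapse to the single relation $xg = yzk' - (b-1)f$. Divisibility by $x$ forces $f = yz\mu(y,z) + xf_1$ and $k' = (b-1)\mu + xk'_1$ with $\mu \in \mathbb{C}[y,z]$ and $f_1, k'_1 \in R$ free, and determines $g = yzk'_1 - (b-1)f_1$. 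The coboundary absorption proceeds in three pieces: $d(\mu X)$ produces the $\mu$-parametrized cocycle, $d(k'_1 f_\bL)$ produces the $k'_1$-piece, and writing $f_1 = f_1(x,0,0) + y\phi + z\psi$, one checks $d(\psi Y + (\phi/c)Z) = \sigma_{y\phi + z\psi}$. What remains is $f_1(x,0,0) \cdot (xe_\bL + (1-b)\bar X) = f_1(x,0,0) \cdot U_\chi$. Since the $e_\bL$-component of any coboundary lies in the ideal $(xy, yz, zx) \subset R$, which meets $\mathbb{C}[x]$ only at $0$, the class $[U_\chi]$ is nontrivial and generates $H^{\mathrm{even}}$ as a free $\mathbb{C}[x]$-module.

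\textbf{Main obstacle.} The principal challenge is the explicit identification of coboundary witnesses: writing down the three-term $\rho$ in case (ii) and the trio $(\mu X,\, k'_1 f_\bL,\, \psi Y + (\phi/c)Z)$ in case (i), then verifying each matches its target on all four components. The algebra is entirely linear but error-prone because of the many simultaneous appearances of $a, b, c$ and their consequences $bc = a^{-1}$, $c^{-1} = b$ in case (i), $(c-1)/c = 1 - b$, etc., which must be deployed correctly in each coefficient-matching.
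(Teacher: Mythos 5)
Your proof is correct and supplies the details that the paper leaves as "simply an exercise in homological algebra," including the explicit coboundary witnesses ($\rho$ in case (ii), and the trio $\mu X$, $k'_1 f_\bL$, $\psi Y + (\phi/c)Z$ in case (i)) and the nontriviality argument via the ideal $(xy,yz,zx)$. I checked the key identities ($(Z)=-a^{-1}(X)-c(Y)$, $(F)=(a-1)(Y)-(b-1)(X)$, and the component-by-component verification that $d\rho=\sigma$ using $b^{-1}=ac$, $c^{-1}=ab$, $abc=1$), and they hold.
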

In fact, direct calculation from $\eqref{eqn:cfevendiff}$ shows that $U_\chi$ is a cocycle if and only if $\chi(g_\alpha) = 1$, $V_\chi$ is a cocycle if and only if $\chi(g_\beta) = 1$, and $W_\chi$ is a cocycle if and only if $\chi(g_\gamma) = 1$.

%\begin{proof}
%Suppose $\chi(g_\alpha)=1$. Thus, $U_\chi$, among the cochains in $\eqref{eqn:UVWcochain}$, is a cocycle of even degree. We see again by direct calculation from $\eqref{eqn:cfevendiff}$ that
%\begin{equation*}
%d(Y\otimes\chi)  = zU_\chi,\quad d(Z\otimes\chi) = \chi^{-1}(g_\beta)yU_\chi
%\end{equation*}
%which shows the annihilation of $U_\chi$ under the multiplication by elements in the ideal $\langle y,z \rangle$, yielding a free $\C[x]$-module on even degree cohomology, generated by $U_\chi$. When $\chi(g_\beta)$ or $\chi(g_\gamma)$ is equal to $1$, then the even degree cohomology is generated by either $V_\chi$ or $W_\chi$ freely over $\C[y]$ or $\C[z]$, accordingly.
%\end{proof}

\begin{remark}
One can check that $P_\chi$ and $R_\chi$ are images of $f_2 \otimes \chi $ and $f_1 \otimes \chi$ in $SH^*(P) \otimes \chi (\subset SH^*_\HG(P) )$ under $\mathfrak{ks}_\HG$. Likewise, $U_\chi$, $V_\chi$ and $W_\chi$  are the images of $e_\alpha t_\alpha\otimes\chi$, $e_\beta t_\beta\otimes\chi$, and $e_\gamma t_\gamma\otimes\chi$.
%under the orbifold Kodaira-Spencer map $\mathfrak{ks}_{\HG}$, listing all the possible candidates of cohomology generators of $CF_{\HG}((\bL,b),(\bL,b))$ in the even degree.
\end{remark}

\section{Products in orbifold Koszul algebras}\label{AppendixB}
In this section we write down the product of $Kos(xyz,\Z/2)$ explicitly, mentioned briefly in Section \ref{subsubsec:4punc}. Recall that $\Z/2=\{1,\chi\}$ acts on $\C[x,y,z]$ by 
 \[ \chi\cdot x=-x,\; \chi\cdot y=-y, \chi\cdot z=z.\]
 In Section \ref{sec:KSRS} we adopted a Lagrangian Floer cohomology as a model for $Kos(W,G)$. Consequently the product structure arises naturally. By Lemma \ref{lem:equivmfHH} and Proposition \ref{prop:mfkos} we have more algebraic way of understanding the ring structure on $Kos(W,G)$. Namely we can construct $Kos(W,G)$ as an endomorphism space of a matrix factorization. In this section we will manifest the algebraic method using matrix factorizations rather than Floer theory. We use the same notation below as in Section \ref{subsec:orbLGB}.

Let $W=xyz$ and $W \boxminus W=x'y'z'-xyz \in \C[x,y,z,x',y',z']$. Recall that there is an isomorphism which preserves corresponding direct summands
\begin{align}\label{eq:kosmf}
\begin{split}
Kos(W,\Z/2)&=Kos(W,1) \oplus Kos(W,\chi) \\
&\cong \big(\Hom_{MF(W\boxminus W)}(\Delta_W^1,\Delta_W^1)\big)^{\Z/2} \oplus 
\big(\Hom_{MF(W\boxminus W)}(\Delta_W^1,\Delta_W^\chi)\big)^{\Z/2}.
\end{split}
\end{align}
%By previous computations we have
%\[ Kos(W,\chi)\cong \C[z]\theta_x \theta_y \oplus \C[z]\theta_x \theta_y\theta_z. \]
%Let us denote the product on $Kos(W,\Z/2)$ by $\cup$. 
We are particularly interested in the multiplication on the nontrivial twisted sector
\[ \cup_{kos}: Kos(W,\chi) \otimes Kos(W,\chi) \to Kos(W,\chi^2)=Kos(W,1).\]
The product $\cup_{kos}$ can be described clearly if we move to $\Hom_{MF(W\boxminus W)}(\Delta_W^1,\Delta_W^\chi)$. A priori it is not easy to recover a morphism of matrix factorizations from a Koszul cocycle, but a recipe is now given in \cite{Leee}. Here, we just give the result of the recipe in our example. Recall that 
\[ Kos(W,\chi)= \big(H^*(K^*(\partial W^\chi)\cdot \theta_x \theta_y)\big)^{\Z/2} \cong \C[z]\theta_x \theta_y \oplus \C[z]\theta_x \theta_y \theta_z.\]
\begin{prop}\label{prop:mfkosqis}
\begin{itemize}
\item Let $h\in \{1,\chi\}$. The following is a quasi-isomorphism of modules: 
\begin{align*}
    (-)_{kos}^h: hom_{MF(W\boxminus W)}(\Delta_W^1,\Delta_W^h) &\to K^*(\partial W^h)\cdot \theta_{I_h}, \\
    \sum_{I,J} f_{I,J}(x,y,z,x',y',z')\theta_I\partial_J &\mapsto \sum_{I: I_h\subset I}f_{I,\emptyset}(x^h,y^h,z^h,x^h,y^h,z^h)\theta_I.
\end{align*}
\item The following is a quasi-inverse of $(-)_{kos}^\chi$:
    \[ e^{\eta_\chi}:K^*(\partial W^\chi)\cdot \theta_x \theta_y \to  hom_{MF(W\boxminus W)}(\Delta_W^1,\Delta_W^\chi)\]
    such that
    \begin{align*} 
    e^{\eta_\chi}(\theta_x \theta_y)=&\; \theta_x\theta_y - z\theta_x\partial_{\theta_x}+\frac{z}{2},\\
    e^{\eta_\chi}(\theta_x\theta_y\theta_z)=&\; \theta_x\theta_y\theta_z + z\theta_x\theta_z\partial_{\theta_x} -y\theta_x\theta_y \partial_{\theta_x}-x'\theta_x\theta_y\partial_{\theta_y}+\frac{z}{2}\theta_z-\frac{y}{2}\theta_y+x'z\theta_x\partial_{\theta_x}\partial_{\theta_y}-\frac{x'z}{2}\partial_{\theta_y}.
    \end{align*}
\end{itemize}
\end{prop}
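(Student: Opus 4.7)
My plan is to treat both bullets via the spectral-sequence identification of the hom-complex with a Koszul-type complex that was already set up in the proof of Proposition \ref{prop:mfkos}, followed by direct Clifford-algebra verifications in the second case.

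For the first bullet, I would recall the identification
\[
hom_{MF(W\boxminus W)}(\Delta_W^1,\Delta_W^h)\simeq \big(R\langle\theta_1,\dots,\theta_n\rangle,\,d|_{h\cdot x\to x'}\big)
\]
from the proof of Proposition \ref{prop:mfkos}, where the right-hand side is the double complex with horizontal differential $d_{hor}=\sum_i(h\cdot x_i-x_i)\theta_i$ and vertical differential $d_{vert}=\sum_i\nabla_iW\cdot\partial_{\theta_i}|_{h\cdot x\to x'}$. The assignment $(-)^h_{kos}$ is precisely the edge morphism of the spectral sequence associated to the vertical filtration: a general element $\sum_{I,J} f_{I,J}\theta_I\partial_J$ is projected onto its $J=\emptyset$ part (the $d_{hor}$-complex), then to the representative terms $f_{I,\emptyset}\theta_I$ with $I_h\subset I$ singled out by the proof of Proposition \ref{prop:mfkos}, with coefficients restricted to $R^h$ by the substitution $x_i,x_i'\to x_i^h$. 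I would first check that $(-)^h_{kos}$ intertwines the hom-differential $D\phi=d^h\phi-(-1)^{|\phi|}\phi\,d^1$ with the Koszul differential $\sum_{i\in I^h}(\partial_iW^h)\partial_{\theta_i}$ on $K^\ast(\partial W^h)\cdot\theta_{I_h}$ (a short computation in the Clifford algebra), and then invoke the $E_2$-degeneration already established in that proof to conclude that it is a quasi-isomorphism.

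For the second bullet, I note that $W^\chi=W(0,0,z)=0$, so the target $K^\ast(\partial W^\chi)\cdot\theta_x\theta_y$ has zero differential and its cohomology coincides with the whole complex, freely generated over $\C[z]$ by $\theta_x\theta_y$ and $\theta_x\theta_y\theta_z$. Hence it suffices to verify (a) that $e^{\eta_\chi}(\theta_x\theta_y)$ and $e^{\eta_\chi}(\theta_x\theta_y\theta_z)$ are $D$-cocycles in the hom-complex, and (b) that $(-)^\chi_{kos}\circ e^{\eta_\chi}$ restricts to the identity on these two generators. Item (b) is immediate from the explicit formulas: after discarding all $\partial_\theta$-terms and setting $x,y,x',y'\to 0$, only $\theta_x\theta_y$ and $\theta_x\theta_y\theta_z$ survive with coefficient $1$. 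For (a) I would plug in
\begin{align*}
d^\chi&=(x'+x)\theta_x+(y'+y)\theta_y+(z'-z)\theta_z-yz\,\partial_{\theta_x}+x'z\,\partial_{\theta_y}+x'y'\,\partial_{\theta_z},\\
d^1&=(x'-x)\theta_x+(y'-y)\theta_y+(z'-z)\theta_z+yz\,\partial_{\theta_x}+x'z\,\partial_{\theta_y}+x'y'\,\partial_{\theta_z},
\end{align*}
and compute $D\phi=d^\chi\phi-(\pm1)\phi\,d^1$ in the Clifford algebra using the relations \eqref{eq:cliffrel}, verifying cancellation term by term. Extending $\C[z]$-linearly gives the map on the full complex, and together with (b) shows $e^{\eta_\chi}$ is a quasi-inverse to $(-)^\chi_{kos}$.

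The main technical hurdle will be the cocycle check for $\phi=e^{\eta_\chi}(\theta_x\theta_y\theta_z)$, which has seven summands of mixed $(|\theta|,|\partial_\theta|)$-bidegrees, so the expansion of $d^\chi\phi-\phi\,d^1$ yields a sizable collection of terms. I plan to organize the bookkeeping by bidegree and exploit the structural observation that $d^\chi-d^1=2x\theta_x+2y\theta_y-2yz\,\partial_{\theta_x}$ captures exactly the mismatch the correction terms $-z\theta_x\partial_{\theta_x}$, $z\theta_x\theta_z\partial_{\theta_x}$, $-y\theta_x\theta_y\partial_{\theta_x}$, $-x'\theta_x\theta_y\partial_{\theta_y}$, $x'z\theta_x\partial_{\theta_x}\partial_{\theta_y}$ (and their scalar tails) are designed to compensate. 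Conceptually, $e^{\eta_\chi}$ is the first-order gauge transformation in the Clifford algebra that conjugates $d^1$ into $d^\chi$ modulo cocycles, which should make the direct verification tractable even though it is lengthy.
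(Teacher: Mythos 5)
The paper itself does not supply a proof of Proposition \ref{prop:mfkosqis}: it cites the (at the time unpublished) work \cite{Leee} for the construction of $e^{\eta_\chi}$, and explicitly defers the cocycle verification to the ``cautious reader'' as a ``tedious computation.'' So there is no internal proof to compare against, and your proposal is essentially filling in an omitted argument.

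For the second bullet your plan is correct and is exactly what the paper hints at. You correctly note that $I_\chi=\{x,y\}$, hence $W^\chi=0$, so the target $K^\ast(\partial W^\chi)\cdot\theta_x\theta_y=\C[z]\theta_x\theta_y\oplus\C[z]\theta_x\theta_y\theta_z$ has vanishing differential; you correctly compute $d^\chi,d^1$ and observe $d^\chi-d^1=2x\theta_x+2y\theta_y-2yz\,\partial_{\theta_x}$; and the check that $(-)^\chi_{kos}\circ e^{\eta_\chi}$ is the identity on the two generators is precisely the projection onto the $J=\emptyset,\ I\supset I_\chi$ terms followed by the substitution, which does pick out $\theta_x\theta_y$ and $\theta_x\theta_y\theta_z$ with coefficient $1$. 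Together with the first bullet this shows $e^{\eta_\chi}$ is a quasi-inverse. (The $\C[z]$-linear extension being a chain map is immediate from $S$-linearity of $D$.) The only real work is the cocycle expansion $D(e^{\eta_\chi}(\theta_x\theta_y\theta_z))=0$, which you have set up correctly.

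For the first bullet there is a genuine gap. You assert that $(-)^h_{kos}$ ``is precisely the edge morphism'' of the spectral sequence from the proof of Proposition \ref{prop:mfkos} and then invoke its $E_2$-degeneration. But that spectral sequence lives on the auxiliary complex $(R\langle\theta_1,\dots,\theta_n\rangle,\ d|_{h\cdot x\to x'})$, which is reached from $\hom_{MF(W\boxminus W)}(\Delta_W^1,\Delta_W^h)$ only through a chain of nontrivial weak equivalences — the augmentation $\Delta_W^h\to S/(x_1'-h\cdot x_1,\dots)[-n]$, the degree shifts, and the self-duality of Koszul matrix factorizations. Showing that the explicit formula $(-)^h_{kos}$ (normal-ordered projection onto $J=\emptyset$, $I\supset I_h$, followed by the diagonal substitution) realizes the composite of those identifications is itself nontrivial and is exactly what the paper defers to \cite{Leee}; you do not verify it. Nor does a spectral sequence set up directly on the $\hom$-complex come for free: the hom-differential $D\phi=d^h\phi-(-1)^{|\phi|}\phi\, d^1$ shifts the $(|I|,|J|)$-bigrading in several directions (left $\theta_i$-multiplication, left $\partial_{\theta_i}$-contraction, right $\theta_i$-contraction, right $\partial_{\theta_i}$-multiplication all occur), so neither the $|I|$- nor $|J|$-filtration is preserved and the vertical filtration from Proposition \ref{prop:mfkos} cannot simply be ``invoked.'' What \emph{can} be verified directly — and is worth stating, since it is the cleaner ingredient — is that $(-)^h_{kos}$ is a chain map: once you substitute $x_i,x_i'\to x_i^h$, the $\theta_i$-multiplication coefficients $x_i'-h\cdot x_i$ and $x_i'-x_i$ both vanish, so the only surviving contribution to the $J=\emptyset$, $I\supset I_h$ part of $D\phi$ is the contraction term $\sum_{i\in I^h}\partial_i W^h\,\partial_{\theta_i}$ applied to the $J=\emptyset$ part of $\phi$, which is the Koszul differential. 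But passing from ``chain map'' to ``quasi-isomorphism'' still requires either tracing through the identifications of Proposition \ref{prop:mfkos} or an independent filtration argument; as written, your proof assumes what needs to be shown.
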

The cautious readers can readily check (through tedious computation) that right hand sides of the above are indeed closed elements of $hom_{MF(W\boxminus W)}(\Delta_W^1,\Delta_W^\chi)$. However, it does not make sense to "compose" two of above morphisms yet. Given two morphisms which we want to compose, we should translate the latter morphism to $hom_{MF(W\boxminus W)}(\Delta_W^\chi,\Delta_W^{\chi^2})$ so that the composition is in $hom_{MF(W\boxminus W)}(\Delta_W^1,\Delta_W^{\chi^2})=hom_{MF(W\boxminus W)}(\Delta_W^1,\Delta_W^{1}).$ It was shown in \cite{CLe} that the translation is given by
\begin{align*} \chi_*: hom_{MF(W\boxminus W)}(\Delta_W^1,\Delta_W^\chi) &\to hom_{MF(W\boxminus W)}(\Delta_W^\chi,\Delta_W^{\chi^2}),\\
f(x,y,z,x',y',z')\theta_I\partial_J &\mapsto f(x,y,z,\chi^{-1}\cdot x',\chi^{-1}\cdot y',\chi^{-1}\cdot z')\cdot \rho(\chi^{-1})(\theta_I\partial_J).
\end{align*}
We summarize above discussions as follows:
\begin{align*}
\theta_x\theta_y \cup_{kos} \theta_x\theta_y&= \big(\chi_*(e^{\eta_\chi}(\theta_x\theta_y)) \circ e^{\eta_\chi}(\theta_x \theta_y)\big)_{kos},\\
\theta_x\theta_y\theta_z \cup_{kos} \theta_x\theta_y&=\big(\chi_*(e^{\eta_\chi}(\theta_x\theta_y\theta_z)) \circ e^{\eta_\chi}(\theta_x \theta_y)\big)_{kos},\\
\theta_x\theta_y\theta_z \cup_{kos} \theta_x\theta_y\theta_z&=\chi_*(e^{\eta_\chi}(\theta_x\theta_y\theta_z)) \circ e^{\eta_\chi}(\theta_x \theta_y\theta_z)\big)_{kos}.
\end{align*}
The following computations are now straightforward.
\begin{align*}
    & \chi_*(e^{\eta_\chi}(\theta_x\theta_y)) \circ e^{\eta_\chi}(\theta_x \theta_y)  \\
    =&(\theta_x\theta_y - z\theta_x\partial_{\theta_x}+\frac{z}{2})\cdot (\theta_x\theta_y - z\theta_x\partial_{\theta_x}+\frac{z}{2}) = \frac{z^2}{4}, \\ & \\
    & \chi_*(e^{\eta_\chi}(\theta_x\theta_y\theta_z)) \circ e^{\eta_\chi}(\theta_x \theta_y) \\
    =& (\theta_x\theta_y\theta_z + z\theta_x\theta_z\partial_{\theta_x} +y\theta_x\theta_y \partial_{\theta_x}-x'\theta_x\theta_y\partial_{\theta_y}+\frac{z}{2}\theta_z+\frac{y}{2}\theta_y+x'z\theta_x\partial_{\theta_x}\partial_{\theta_y}-\frac{x'z}{2}\partial_{\theta_y}) \\
    & \cdot (\theta_x\theta_y - z\theta_x\partial_{\theta_x}+\frac{z}{2}) \\
    =& \frac{z^2}{4}\theta_z + \frac{yz}{4}\theta_y - \frac{x'z}{2}\theta_x + c_1  \partial_{\theta_x}+c_2 \partial_{\theta_y}+c_3\partial_{\theta_x}\partial_{\theta_y}, \\ & \\
   & \chi_*(e^{\eta_\chi}(\theta_x\theta_y\theta_z)) \circ e^{\eta_\chi}(\theta_x \theta_y\theta_z) \\
   =& (\theta_x\theta_y\theta_z + z\theta_x\theta_z\partial_{\theta_x} +y\theta_x\theta_y \partial_{\theta_x}-x'\theta_x\theta_y\partial_{\theta_y}+\frac{z}{2}\theta_z+\frac{y}{2}\theta_y+x'z\theta_x\partial_{\theta_x}\partial_{\theta_y}-\frac{x'z}{2}\partial_{\theta_y}) \\
   & \cdot (\theta_x\theta_y\theta_z + z\theta_x\theta_z\partial_{\theta_x} -y\theta_x\theta_y \partial_{\theta_x}-x'\theta_x\theta_y\partial_{\theta_y}+\frac{z}{2}\theta_z-\frac{y}{2}\theta_y+x'z\theta_x\partial_{\theta_x}\partial_{\theta_y}-\frac{x'z}{2}\partial_{\theta_y})\\ 
   =& \frac{x'y}{2}\theta_x\theta_y+\frac{yz}{2}\theta_y\theta_z-\frac{x'z}{2}\theta_x\theta_z+\frac{x'yz}{4}+ d_1  \partial_{\theta_x}+d_2 \partial_{\theta_y}+d_3\partial_{\theta_x}\partial_{\theta_y}
\end{align*}
where $c_1,c_2,c_3,d_1,d_2,d_3 \in S\langle \theta_x,\theta_y,\theta_z\rangle$. 

Applying Proposition \ref{prop:mfkosqis}, we have
\begin{align*} 
\theta_x \theta_y \cup_{kos} \theta_x\theta_y & = \frac{z^2}{4}, \\
\theta_x \theta_y \theta_z \cup_{kos} \theta_x \theta_y &= \frac{z^2}{4}\theta_z + \frac{yz}{4}\theta_y - \frac{xz}{2}\theta_x, \\
\theta_x \theta_y \theta_z \cup_{kos} \theta_x \theta_y \theta_z &= 0. 
\end{align*}

\bibliographystyle{amsalpha}
\bibliography{geometry}
%\begin{thebibliography}{}
%\bibitem{CHL1} C.-H. Cho, H. Hong and S.-C. Lau, {\em Localized mirror functor for Lagrangian immersions, and homological mirror symmetry for $\mathbb{P}^1_{a,b,c}$}, J. Differential Geom. 106 (2017), no. 1, 45-126.
%\end{thebibliography}

\end{document}